\def\MR#1{} %to omit MR number in amsalpha bibstyle
\tikzset{anchorbase/.style={baseline={([yshift=-0.5ex]current bounding box.center)}},
  int/.style={thick},
  cross line/.style={preaction={draw=white,line width=6pt,-}},
  wall/.style={thin,double,blue},
  middlearrow/.style={postaction=decorate,decoration={markings,mark=at
    position .55 with {\arrow{stealth};}}},
  middlearrowrev/.style={postaction=decorate,decoration={markings,mark=at
    position .55 with {\arrowreversed{stealth};}}},
  ev/.style={shape=rectangle, draw}
}
\newcommand{\tcoev}{\stackrel{\longleftarrow}{\operatorname{coev}}}
\newcommand{\tev}{\stackrel{\longleftarrow}{\operatorname{ev}}}
\newcommand{\ev}{\stackrel{\longrightarrow}{\operatorname{ev}}}
\newcommand{\coev}{\stackrel{\longrightarrow}{\operatorname{coev}}}
\newcommand{\p}[1]{\ensuremath{\bar {#1}}}
\newcommand{\C}{\mathbb{C}}
\newcommand{\R}{\mathbb{R}}
\newcommand{\Z}{\mathbb{Z}}
\newcommand{\Q}{\mathbb{Q}}
\newcommand{\I}{\sqrt{-1}}
\newcommand{\unit}{\mathbb{I}}
\newcommand{\q}{v}
\newcommand{\qq}{\mathbf{q}}
\newcommand{\cat}{\mathcal{C}}
\newcommand{\FR}{\mathsf{Z}}
\newcommand{\Gr}{\mathsf{G}}
\newcommand{\SSS}{\mathsf{X}}
\newcommand{\Hom}{\textup{\text{Hom}}}
\newcommand{\End}{\textup{\text{End}}}
\newcommand{\id}{\textup{\text{id}}}
\newcommand{\qd}{\mathsf{d}}
\newcommand{\mt}{\mathsf{tr}}
\newcommand{\Ztwo}{\mathbb{Z} \slash 2 \mathbb{Z}}
\newcommand{\osp}{\mathfrak{osp}(1 \vert 2)}
\newcommand{\sltwo}{\mathfrak{sl}(2)}
\newcommand{\gloo}{\mathfrak{gl}(1 \vert 1)}
\newcommand{\UqUn}{U_q^H(\mathfrak{osp}(1 \vert 2))}
\newcommand{\Uq}{\overline{U}_q^H(\mathfrak{osp}(1 \vert 2))}
\newcommand{\Uqb}{\overline{U}_q^{H,\geq 0}(\osp)}
\newcommand{\Uqbn}{\overline{U}_q^{H,\leq 0}(\osp)}
\newcommand{\UqG}{U_{\q}(\mathfrak{osp}(1 \vert 2))}
\newcommand{\Uqsltwo}{\overline{U}_q^H(\mathfrak{sl}(2))}
\newcommand{\ptr}{\operatorname{ptr}}
\newcommand{\tr}{\operatorname{tr}}
\def\pser#1{\llbracket#1\rrbracket}
\newcommand{\Cob}{\mathsf{C}\mathsf{ob}^{\textnormal{ad}}}
\newcommand{\ZVect}{\mathsf{V}\mathsf{ect}^{\FR\textnormal{-gr}}}
\newcommand{\TQFT}{\mathcal{Z}}
\newcommand{\CS}{{\mathcal{S}}}
\newcommand{\coh}{\omega}
\newcommand{\mer}{\mathfrak{m}}
\newcommand{\D}{{\mathcal{D}}} % Exclude if use kpfonts 
\newcommand{\Co}{\mathsf{Col}}
\newcommand{\Zhat}{\widehat{Z}}
\newcommand{\tor}{\mathcal{T}}
\newcommand{\half}{\varepsilon}
\newcommand{\even}{t}
\newcommand{\colOne}{\mathbf{1}}
\newcommand{\lk}{\ell k}
\newcommand{\Spin}{\textnormal{Spin}}
\newcommand{\Spinc}{\textnormal{Spin}^{\textnormal{c}}}
\newcommand{\red}[1]{{{#1}_0}}
\newcommand{\parity}[1]{{{#1}^{\prime}}}
\newcommand{\pideal}{I^+}
\newcommand{\mideal}{I^-}
\newcommand{\pmideal}{I}
\newcommand{\pserbin}[2]{\left\llbracket\begin{matrix}
    #1\\#2
\end{matrix}\right\rrbracket}
\newsavebox{\@brx}
\newcommand{\llangle}[1][]{\savebox{\@brx}{\(\m@th{#1\langle}\)}%
  \mathopen{\copy\@brx\kern-0.5\wd\@brx\usebox{\@brx}}}
\newcommand{\rrangle}[1][]{\savebox{\@brx}{\(\m@th{#1\rangle}\)}%
  \mathclose{\copy\@brx\kern-0.5\wd\@brx\usebox{\@brx}}}
\newtheorem{Lem}{Lemma}[section]
\newtheorem{Prop}[Lem]{Proposition}
\newtheorem{Def}[Lem]{Definition}
\theoremstyle{plain}
\newtheorem{Thm}[Lem]{Theorem}
\newtheorem{Cor}[Lem]{Corollary}
\newtheorem{Hyp}[Lem]{Hypothesis}
\newtheorem{thm}{Theorem}
\theoremstyle{definition}
\newenvironment{Ex}
  {\pushQED{\qed}\examplex}
  {\popQED\endexamplex}
\newtheorem{Rem}[Lem]{Remark}
\newcommand{\epsh}[2]
         {\begin{array}{c} \hspace{-1.3mm}
        \raisebox{-4pt}{\epsfig{figure=#1,height=#2}}
        \hspace{-1.9mm}\end{array}}
\newcommand{\comm}[1]{}
\title[TFT and $\Zhat$-invariants from $\osp$]{Non-semisimple topological field theory and $\Zhat$-invariants from $\osp$}
\author[F. Costantino]{Francesco Costantino}
\address{Institut de Math\'{e}matiques de Toulouse \\ 118 route de Narbonne, F-31062 Toulouse\\ France}
\email{francesco.costantino@math.univ-toulouse.fr}
\author[M. Harper]{Matthew Harper}
\address{Department of Mathematics \\ Michigan State University\\
East Lansing, MI 48824 \\ USA}
\address{Department of Mathematics \\ University of California Riverside\\
Riverside, CA 92521 \\ USA}
\email{mrhmath@proton.me}
\author[A. Robertson]{Adam Robertson}
\address{Department of Mathematics and Statistics \\ Utah State University\\
Logan, Utah 84322 \\ USA}
\email{adam.robertson@usu.edu}
\author[M.\,B. Young]{Matthew B. Young}
\address{Department of Mathematics and Statistics \\ Utah State University\\
Logan, Utah 84322 \\ USA}
\email{matthew.young@usu.edu}
\date{\today}
\begin{document}

\begin{abstract}
We construct three-dimensional non-semisimple topological field theories from the unrolled quantum group of the Lie superalgebra $\mathfrak{osp}(1 \vert 2)$. More precisely, the quantum group depends on a root of unity $q=e^{\frac{2 \pi \sqrt{-1}}{r}}$, where $r$ is a positive integer greater than $2$, and the construction applies when $r$ is not congruent to $4$ modulo $8$. The algebraic result which underlies the construction is the existence of a relative modular structure on the non-finite, non-semisimple category of weight modules for the quantum group. We prove a Verlinde formula which allows for the computation of dimensions and Euler characteristics of topological field theory state spaces of unmarked surfaces. When $r$ is congruent to $\pm 1$ or $\pm 2$ modulo $8$, we relate the resulting $3$-manifold invariants with physicists' $\widehat{Z}$-invariants associated to $\mathfrak{osp}(1 \vert 2)$. Finally, we establish a relation between $\widehat{Z}$-invariants associated to $\mathfrak{sl}(2)$ and $\mathfrak{osp}(1 \vert 2)$ which was conjectured in the physics literature.
\end{abstract}

\maketitle

\tableofcontents

\section*{Introduction}
\addtocontents{toc}{\protect\setcounter{tocdepth}{1}}

The first goal of this paper is to develop the representation theory of an unrolled quantization of the orthosymplectic Lie superalgebra $\osp$. The second goal is to connect this representation theory to the non-semisimple quantum topology of $3$-manifolds. We achieve the second goal in two ways. First, we construct a three-dimensional non-semisimple topological field theory (TFT) $\TQFT$ using the framework of relative modular categories. Second, we study the $\Zhat^{\mathfrak{g}}$-invariants of $3$-manifolds, which were recently introduced by Gukov, Pei, Putrov and Vafa. Here $\mathfrak{g}$ is a complex semisimple Lie superalgebra. More precisely, we establish direct relations between $\Zhat^{\osp}$ and $\Zhat^{\sltwo}$ and between $\Zhat^{\osp}$ and the $3$-manifold invariants defined by $\TQFT$. In the remainder of the introduction, we explain our results in more detail and explain the general context into which they fit.

Let $\osp$ be the complex orthosymplectic Lie superalgebra associated to the super vector space $\C^{1 \vert 2}$ with the canonical non-degenerate supersymmetric bilinear form. The Lie superalgebra $\osp$ is in many respects the simplest Lie superalgebra. For example, it is basic\footnote{Recall that a Lie superalgebra is \emph{basic} if admits a non-degenerate invariant (even) bilinear form and its even subalgebra is reductive.} classical of rank one and its category of finite dimensional complex representations is semisimple.\footnote{In fact, the category of finite dimensional representations of a Lie superalgebra $\mathfrak{g}$ is semisimple if and only $\mathfrak{g}$ is a semisimple Lie algebra or is isomorphic to $\mathfrak{osp}(1 \vert 2n)$ for some $n \geq 1$ \cite{djokovic1987}.} The quantum group of $\osp$ and its representation theory were originally studied in \cite{kulish1988,kulish1989,kulish1989b} with applications to quantum topology following shortly thereafter. A modified version of the Reshetikhin--Turaev construction of link and $3$-manifold invariants \cite{reshetikhin1990,reshetikhin1991} which applies to certain ribbon Hopf superalgebras was introduced and studied in the context of the small quantum group of $\osp$ at primitive roots of unity of odd order \cite{zhang1994,zhang1995,lee1996}. This approach utilizes only a small class of representations of $\osp$, requiring, in particular, that all quantum dimensions are non-vanishing. In various settings, relations between link invariants associated to quantizations of $\mathfrak{osp}(1 \vert 2n)$ and $\mathfrak{so}(2n+1)$---again coloured by only a small class of representations---are known \cite{rittenberg1982,zhang1992,blumen2010,clark2017}.

The constructions of this paper are fundamentally different than those of the previous paragraph. The key new feature is the consideration of a much larger class of representations of quantum $\osp$, in particular those of quantum dimension zero. Not only does this allow the definition of a larger class of link invariants, the resulting $3$-manifold invariants are shown to be the top level of a three-dimensional TFT. Another feature of our approach is the systematic treatment of all roots of unity $e^{\frac{2 \pi \I}{r}}$, $r \geq 3$, without regard for the parity of $r$. In this way, interesting behaviour depending on the congruence class of $r$ modulo $8$ is revealed.

We now explain our results in more detail. The central algebraic object of the paper is the restricted unrolled quantum group $\Uq$, an infinite dimensional Hopf superalgebra obtained as a semi-direct product of the restricted quantum group of $\osp$ with the group algebra of a Cartan lattice. Here $q=e^{\frac{2 \pi \I}{r}}$ for an integer $r \geq 3$. Section \ref{sec:unrolledOsp} is devoted to studying the representation theory of $\Uq$ and is presented so as to ease comparison with the representation theory of $\overline{U}_q^H(\mathfrak{sl}(2))$, as developed in \cite{costantino2015}. The category $\cat$ of finite dimensional $\Uq$-weight modules is rigid monoidal abelian but is neither finite nor semisimple. Motivated by the known universal $R$-matrix for the $\hbar$-adic quantum group of $\osp$ \cite{kulish1989,saleur1990}, we construct in Proposition \ref{prop:braiding} a braiding on the category $\cat$. Surprisingly, with respect to the natural class of pivotal structures, the category $\cat$ is ribbon only when $r \not\equiv 4 \mod 8$; see Proposition \ref{prop:ribbonCat}. In future work, we give an independent, Hopf algebraic, perspective on the lack of ribbon structure when $r \equiv 4 \mod 8$.

Our first main result, which is the culmination of our study of the representation theory of $\Uq$, can now be stated as follows. See Section \ref{sec:relModCat} for recollections on relative modular categories.

\begin{thm}[{Theorem \ref{thm:relMod}}]
\label{thm:relModIntro}
If $r \not\equiv 4 \mod 8$, then the category $\cat$ admits a relative modular structure.
\end{thm}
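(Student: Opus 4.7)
The plan is to verify the defining data and axioms of a relative modular category for $\cat$, as recalled in Section \ref{sec:relModCat}: a $\FR$-grading with a specified singular set $\SSS \subset \FR$, a free realization of a periodicity group $\Gr$, a modified trace on the ideal of projective objects, and non-degeneracy of the relative Hopf link pairing on each generic homogeneous component. The ribbon structure that is the starting point is provided by Proposition \ref{prop:ribbonCat}, and it is exactly there that the congruence hypothesis $r \not\equiv 4 \mod 8$ first enters; once the category is ribbon, the remaining data are constructed by direct computations in the representation theory developed in Section \ref{sec:unrolledOsp}.

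First I would set up the grading. Simple weight modules over $\Uq$ are labelled by a highest weight in a one-dimensional Cartan space, and tensor products add highest weights. Quotienting by the image of the root lattice of $\osp$ gives an abelian group $\FR$ grading $\cat$. The singular set $\SSS \subset \FR$ consists of the finitely many cosets containing weights where the typicality condition fails, that is, where the relevant $q$-numbers vanish and the Verma-like modules cease to be simple and projective. The free realization of a periodicity group $\Gr$ comes from the one-dimensional simple modules on which the Cartan acts by the flat weights obtained from the enlarged lattice in $\Uq$: tensoring with such a module acts invertibly on $\cat$ by translation of the highest weight, and identifies $\Gr$ with the flat weight sublattice inside $\FR$.

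The main technical step is the construction of the modified trace $\mt$ on $\mathrm{Proj}(\cat)$. Writing $V_\alpha$ for the typical simple module of highest weight $\alpha$ (projective for $\alpha$ in a generic coset), I would specify $\mt_{V_\alpha}(\id_{V_\alpha}) = \qd(\alpha)$ for an explicit scalar function $\qd$, and extend by the ambidexterity formula $\mt_{V \otimes W}(f) = \mt_V(\ptr_W(f))$. The content is to choose $\qd$ so that the resulting assignment is well-defined and that left and right partial traces agree. Following the strategy used for $\Uqsltwo$ in \cite{costantino2015}, I would verify ambidexterity by computing the open Hopf link endomorphism on $V_\alpha \otimes V_\beta$ with one strand closed: invariance under swap of the two strands, combined with the ribbon equation, yields the required symmetry. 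In the super setting this computation carries parity signs tracked by the $\Ztwo$-grading, and $\qd(\alpha)$ is forced (up to an overall scalar) by the requirement that these signs cancel; concretely, I would derive $\qd(\alpha)$ from a regularized inverse of the standard quantum dimension of $V_\alpha$ at a typical weight and cross-check it against the explicit Hopf link eigenvalues obtained from the $R$-matrix of Proposition \ref{prop:braiding}.

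Finally, I would verify relative modularity of each generic component $\cat_\alpha$: the Kirby-color sums over a $\Gr$-orbit of typical simples are nonzero scalars $\Delta_{\pm}$, and the matrix of Hopf link evaluations between simples of $\cat_\alpha$ and $\cat_{-\alpha}$ is non-degenerate. Both reduce to Gauss-sum-type computations in roots of unity determined by the $R$-matrix and pivotal element, and the hypothesis $r \not\equiv 4 \mod 8$ ensures that the relevant sums do not vanish. The main obstacle throughout is ambidexterity of the modified trace: the $\Ztwo$-grading of $\osp$ propagates through partial traces as signs, and compatibility between these signs and the ribbon twist is precisely what forces the congruence restriction on $r$, paralleling its appearance in the ribbon structure theorem. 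Once ambidexterity is in hand, non-degeneracy is a direct Gauss sum calculation and Theorem \ref{thm:relModIntro} follows.
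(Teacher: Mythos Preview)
Your outline captures the broad shape of the argument but contains several concrete gaps and misattributions.

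First, a notational point: you have swapped the roles of $\Gr$ and $\FR$. In Definition \ref{def:preMod}, $\Gr$ is the grading group (here $\C/2\Z$ or $\C/\Z$) and $\FR$ is the group acting by the free realization (here $\Z \times \Ztwo$), not the other way around.

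More substantively, you misidentify where the hypothesis $r \not\equiv 4 \mod 8$ enters. It enters \emph{only} in the existence of the ribbon structure (Proposition \ref{prop:ribbonCat}). It does \emph{not} reappear in the ambidexterity of the modified trace or in the non-vanishing of the stabilization coefficients. The paper obtains the modified trace abstractly from unimodularity of $\cat$ (Lemma \ref{lem:catUnimod}) and \cite{geer2022}, with no congruence condition needed; the Gauss sums $\Delta_{\pm}$ are then computed explicitly (Propositions \ref{prop:redMod}, \ref{prop:redModZeroModEight}) and are nonzero in every case where $\cat$ is ribbon. Your assertion that ``compatibility between these signs and the ribbon twist is precisely what forces the congruence restriction'' in the modified trace step is incorrect.

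You are also missing a genuine case distinction: when $r \equiv 0 \mod 8$, the $\C/2\Z$-grading with the obvious free realization gives a \emph{degenerate} pre-modular category ($\Delta_{\pm}=0$). One must pass to the coarser $\C/\Z$-grading and a modified free realization (Proposition \ref{prop:redModZeroModEight}). Your outline does not anticipate this.

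Finally, the relative modularity step is where the real work lies, and your plan does not address it. The condition in Definition \ref{def:modG} is not ``the matrix of Hopf link evaluations is non-degenerate''; it is the diagrammatic identity \eqref{eq:mod}, which asserts that a certain endomorphism of $V_i \otimes V_j^{\vee}$ factors as $\zeta \cdot \tcoev \circ \ev$. The paper's argument is: the handle slide property of \cite[Lemma 5.9]{costantino2015} shows this endomorphism is transparent in $\cat_{\p 0}$; Lemma \ref{lem:transparent} (a representation-theoretic statement you would need to prove) shows that transparent endomorphisms factor through one-dimensional modules; and then one computes which one-dimensional modules can actually appear. When $r \equiv 0 \mod 8$ there are one-dimensional modules in $\cat_{\p 0}$ not in the image of $\sigma$, and an extra computation is needed to rule out their contribution. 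None of this is a Gauss sum; it is structural, and without the transparent-morphism lemma your argument has no mechanism to force the required factorization.
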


The details of the relative modular structure, which includes a grading $\cat = \bigoplus_{g \in \Gr} \cat_g$ by an abelian group $\Gr$ and an abelian group $\FR \simeq \FR_0 \times \Ztwo$ together with a ribbon functor $\sigma: \FR \rightarrow \cat_0$, depend on the congruence class of $r$ modulo $8$. As explained below, the cases $r \equiv 1 \mod 2$ and $r \equiv 2 \mod 4$ should be seen as belonging to a single family while that of $r \equiv 0 \mod 8$ is a distinct family. By general results of De Renzi \cite{derenzi2022}, a relative modular structure on $\cat$ gives rise to a symmetric monoidal functor
\[
\TQFT_{\cat} : \Cob_{\cat} \rightarrow \ZVect_{\C},
\]
the three-dimensional oriented non-semisimple TFT associated to $\cat$. The domain $\Cob_{\cat}$ is a category of decorated surfaces and their admissible bordisms. For example, morphisms include the data of a  cohomology class $\coh$ on the underlying $3$-manifold with coefficients in $\Gr$. The codomain $\ZVect_{\C}$ is the monoidal category of $\FR$-graded vector spaces with a symmetric braiding which, upon restricting to underlying $\Ztwo$-graded vector spaces, recovers that of the category of super vector spaces. The $3$-manifold invariants defined by $\TQFT_{\cat}$ are, up to normalization, the CGP invariants introduced by first author, Geer and Patureau-Mirand \cite{costantino2014}.

Theorem \ref{thm:relModIntro} adds to the growing list of relative modular categories, and so non-semisimple TFTs, which so far includes weight modules over unrolled quantum groups associated to simple Lie algebras \cite{blanchet2016,derenzi2020}, the Lie superalgebras $\mathfrak{sl}(m \vert n)$, $m \neq n$, \cite{anghel2021,ha2022}, $\mathfrak{gl}(1\vert 1)$ \cite{geerYoung2025} and Lie superalgebras with abelian bosonic subalgebra \cite{garnerGeerYoung2025}. The resulting TFTs are less developed, with calculations being limited to $\sltwo$ \cite{blanchet2016}, $\mathfrak{gl}(1 \vert 1)$ \cite{geerYoung2025} and Lie superalgebras with abelian bosonic subalgebra \cite{garnerGeerYoung2025}.

In Section \ref{sec:tft} we perform a number of calculations for $\TQFT_{\cat}$. We begin by proving a Verlinde formula, Theorem \ref{thm:verlinde}, which relates the partition function of a trivial circle bundle over a surface $\Sigma_g$ of genus $g \geq 1$, with $\coh$ having holonomy $\beta \in \Gr$ along the circle fibre, to a $\beta$-dependent specialization of the generating function of $\FR$-graded dimensions of $\TQFT_{\cat}(\Sigma_g)$. Our derivation of the Verlinde formula follows previous derivations \cite{blanchet2016,geerYoung2025,garnerGeerYoung2025}, relying on an explicit surgery presentation of the $3$-manifold in question and the computation of modified quantum dimensions of projective $\Uq$-modules. The Verlinde formula is a key tool to establish the following result, which we state for $g \geq 2$.

\begin{thm}[{Corollaries \ref{cor:VerlindeEuler} and \ref{cor:VerlindeTotalDim}}]
\label{thm:stateSpacesIntro}
The Euler characteristic with respect to the parity subgroup $\Ztwo$ of $\FR$ and total dimension of the state space $\TQFT_{\cat}(\Sigma_g)$ are given by
\[
\chi(\TQFT_{\cat}(\Sigma_g))
=
\begin{cases}
0 & \mbox{if } r \equiv 1 \mod 2, \\
0 & \mbox{if } r \equiv 2 \mod 4, \\
\frac{r^{3g-3}}{2^{2g-3}}& \mbox{if } r \equiv 0 \mod 8
\end{cases}
\]
and
\[
\dim_{\C} \TQFT_{\cat}(\Sigma_g)
% =
% \begin{cases}
% 2^{2g-2}r^{3g-3} & \mbox{if } r \equiv 1 \mod 2, \\
% \frac{r^{3g-3}}{2^{g-1}} & \mbox{if } r \equiv 2 \mod 4, \\
% \frac{r^{3g-3}}{2^{2g-3}} & \mbox{if } r \equiv 0 \mod 8.
% \end{cases}
=
r^{3g-3} \cdot \begin{cases}
2^{2g-2} & \mbox{if } r \equiv 1 \mod 2, \\
\frac{1}{2^{g-1}} & \mbox{if } r \equiv 2 \mod 4, \\
\frac{1}{2^{2g-3}} & \mbox{if } r \equiv 0 \mod 8.
\end{cases}
\]
\end{thm}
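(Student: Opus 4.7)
The plan is to deduce both formulas from the Verlinde formula of Theorem~\ref{thm:verlinde}. That formula identifies, for each holonomy $\beta \in \Gr$, the partition function $\TQFT_{\cat}(\Sigma_g \times S^1; \coh_\beta)$ with an explicit specialization at $\beta$ of the generating function $\sum_{\lambda \in \FR} (\dim_{\C} \TQFT_{\cat}(\Sigma_g)_\lambda) \, x^\lambda$ of $\FR$-graded dimensions. On the other hand, under the symmetric braiding on $\ZVect_{\C}$, which restricts on the $\Ztwo$-factor to the super braiding, a standard TFT argument identifies $\TQFT_{\cat}(\Sigma_g \times S^1; \coh_\beta)$ with a categorical (super)trace on $\TQFT_{\cat}(\Sigma_g)$. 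Matching the two descriptions, $\chi(\TQFT_{\cat}(\Sigma_g))$ and $\dim_{\C} \TQFT_{\cat}(\Sigma_g)$ are recovered by evaluating the Verlinde sum at the two holonomies $\beta \in \Gr$ that correspond, respectively, to the trivial class and to the class detecting the parity generator of $\Ztwo \subset \FR$.

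Concretely, I would first write out the Verlinde sum in each congruence class: it takes the form of an integral over a typical slice of the $(2-2g)$-th power of the modified quantum dimension of the generic simple weight module, weighted by the normalization constants of the relative modular structure of Theorem~\ref{thm:relModIntro} and by a $\beta$-dependent phase. The modified dimensions of typical $\Uq$-modules should already be in hand in closed trigonometric form from the proof of Theorem~\ref{thm:relModIntro}. I would then evaluate the integral case by case. For $r \equiv 1 \mod 2$ and $r \equiv 2 \mod 4$, the Euler-characteristic specialization inserts a phase making the integrand an odd function on the typical slice, so the integral vanishes by symmetry and matches the two zeros in the table; the non-zero entries arise from the opposite specialization, where the $\beta$-phase is trivial and the integral collapses to a sum of powers of quantum integers that evaluates to the asserted product of $r^{3g-3}$ with a power of $2$. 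The case $r \equiv 0 \mod 8$ is essentially the same computation, but here the parity grading is already absorbed into the relative modular structure, so both specializations survive and coincide, explaining the equality $\chi = \dim_{\C}$ visible in the table.

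The main obstacle I expect is the bookkeeping of powers of $2$. They depend simultaneously on the structure of $\Gr$, on the splitting $\FR \simeq \FR_0 \times \Ztwo$, on the cardinality of the finite part of the typical slice, and on the framing correction for the trivial $S^1$-bundle over $\Sigma_g$. A secondary subtlety is identifying which of the two holonomies produces the super-trace and which produces the ordinary trace; this requires tracing, through De Renzi's construction, how the mapping-torus partition function becomes a graded trace on the state space, and carefully distinguishing the role of the parity factor as a grading from its role in the symmetric braiding.
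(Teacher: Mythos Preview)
Your plan has the right starting point—the Verlinde formula of Theorem~\ref{thm:verlinde}—but contains a genuine gap for the total dimension and a misidentification of the vanishing mechanism for the Euler characteristic.

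For the Euler characteristic: the specialization you want is $\bar\lambda \to 0$, which sets $x=1$ in the generating function and gives $\sum_{(k,\bar p)} (-1)^{\bar p}\dim \TQFT_{(k,\bar p)}(\Sigma_g)$ directly. This is not an evaluation but a \emph{limit}, since $\bar\lambda=0$ lies in the singular set $\SSS$. The vanishing for $r\equiv 1\bmod 2$ and $r\equiv 2\bmod 4$ is not an oddness/symmetry cancellation; rather, in the Verlinde sum \eqref{eq:verlinde} (with $s=0$) the common numerator $q^{\red{r}\lambda}-q^{-\red{r}\lambda}$ tends to zero while every denominator $q^{\lambda+k}+q^{-\lambda-k}$ stays nonzero, so each term vanishes individually. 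For $r\equiv 0\bmod 8$ exactly two denominators also vanish, and l'H\^{o}pital produces the nonzero answer.

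For the total dimension: there is no holonomy $\bar\lambda$ that directly yields $\sum_{(k,\bar p)}\dim \TQFT_{(k,\bar p)}(\Sigma_g)$, because the Verlinde specialization is $\dim_{(q^{\half r\bar\lambda},\,1)}$—the variable $y$ is fixed to $1$, so the built-in $(-1)^{\bar p}$ never disappears. The paper's proof closes this gap with an independent combinatorial argument (Theorem~\ref{thm:genusgStateSpace}): a spanning set analysis shows $\TQFT(\Sigma_g)$ is supported only in degrees $(d,\bar d)$ (and only even $d$ when $r\equiv 0\bmod 8$). With that constraint the generating function collapses to a single-variable sum, and setting $x=-1$ (i.e.\ taking the limit $\bar\lambda\to\frac{1}{2\half}$) converts the sign $(-1)^{\bar d}$ into $(+1)$, recovering the total dimension. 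Without this support constraint your second specialization does not compute what you claim; you should incorporate that step explicitly. Note also that you have the roles of the two specializations reversed: the trivial holonomy gives $\chi$, the nontrivial one gives $\dim_{\C}$.
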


The computation of the Euler characteristic follows by taking the limit $\beta \rightarrow 0$ in the Verlinde formula. The total dimension is more difficult to access since it does not obviously arise as a limit of the Verlinde formula. To resolve this, we provide in Theorem \ref{thm:genusgStateSpace} a combinatorially defined spanning set of $\TQFT_{\cat}(\Sigma_g)$ which, in particular, constrains the $\FR$-support of $\TQFT_{\cat}(\Sigma_g)$. Using these support conditions, the total dimension is seen to arise as a limit of the Verlinde formula, leading to the above result.

In Section \ref{sec:Zhat} we shift attention to the $\Zhat$-invariants (also called homological blocks) recently introduced in the physics literature in the context of three-dimensional $\mathcal{N}=2$ supersymmetric gauge theory with the goal of categorifying the Reshetikhin--Turaev invariants of $3$-manifolds \cite{gukov2017,gukov2020}. Physically, $\Zhat^{\mathfrak{g}}(M;\qq)$ is the BPS index of a system of intersecting fivebranes wrapping $M$ in M-theory. These are formal series $\Zhat_{\mathfrak{s}}^{\mathfrak{g}}(M;\qq)$ in an indeterminate $\qq$ which depend on a complex Lie superalgebra $\mathfrak{g}$ and geometric structure $\mathfrak{s}$ on the $3$-manifold $M$. At present, there is a mathematical definition of $\Zhat_{\mathfrak{s}}^{\mathfrak{g}}(M;\qq)$ only for certain $3$-manifolds $M$, such as weakly negative definite plumbed $3$-manifolds \cite{gukov2020,gukov2021b}; see Section \ref{sec:plumbed3Mfld}. While the case $\mathfrak{g}=\sltwo$ is most studied, more general $\mathfrak{g}$ have recently received more attention \cite{park2020,chung2022,chauhan2023,chauhan2024,ferrari2024,moore2024}. In this paper we are primarily interested in the case $\mathfrak{g}=\osp$, where again $\mathfrak{s}$ is a $\Spinc$ structure. Physically, this case results from the inclusion of orientifold planes in the M-theoretic interpretation above.

Our next result is a precise relation between $\Zhat$-invariants for $\sltwo$ and $\osp$.

\begin{thm}[{Theorem \ref{thm:sltwoOspZhat}}]
\label{thm:sltwoOspZhatIntro}
Let $\Gamma$ be a weakly negative definite plumbing graph, $M$ the rational homology $3$-sphere obtained from integral surgery on the framed link defined by $\Gamma$ and $\mathfrak{s}$ a $\Spinc$ structure on $M$. There exists a root of unity $c_{\mathfrak{s}} \in \C$ whose order is divisible by $8 \vert H_1(M;\Z)\vert$ such that
\[
\Zhat^{\osp}_{\mathfrak{s}}(M; -\qq)
=
c_{\mathfrak{s}} \Zhat_{\mathfrak{s}}^{\sltwo}(M; \qq).
\]
\end{thm}

Theorem \ref{thm:sltwoOspZhatIntro}, whose proof is a direct calculation using the surgery definition of $\Zhat$-invariants, confirms the expected relation between $\Zhat^{\sltwo}$ and $\Zhat^{\osp}$ \cite{chauhan2023} and continues a long line of known relations between quantum invariants associated to $\sltwo$ and $\osp$ and, more generally, $\mathfrak{so}(2n+1)$ and $\mathfrak{osp}(1 \vert 2n)$ \cite{zhang1992,ennes1998,blumen2010,clark2017}.

Finally, we establish a relation between the $3$-manifold invariants associated to the TFT $\TQFT_{\cat}$ and $\Zhat^{\osp}$-invariants. More precisely, we use a slight renormalization of the former, which we denote by $N_r^{\osp}$; see Definition \ref{def:cgpInvtOrig}.

\begin{thm}[{Theorem \ref{thm:cgpVsZhat}}]
\label{thm:cgpVsZhatIntro}
Let $\delta \in \{\pm 1\}$. Let $\Gamma$ be a weakly negative definite plumbing graph, $M$ the rational homology $3$-sphere obtained from integral surgery and $\coh \in H^1(M; \Gr)$ a sufficiently generic cohomology class. Under the technical assumptions of Hypothesis \ref{hyp:techAssump}, there is an equality
\[
N_r^{\osp}(M,\coh)
=
\lim_{\qq \rightarrow e^{\frac{4\pi \I}{r}}} \sum_{\mathfrak{s} \in \Spinc(M)} c^{\osp}_{\coh,\mathfrak{s}} \Zhat^{\osp}_{\mathfrak{s}} (M;\qq),
\]
where
\[
c^{\osp}_{\coh,\sigma(b,s)}
=
\frac{e^{\pi{\I} \mu(M,s)} \tor(M,[4 \coh])}{\vert H_1(M;\Z) \vert} \sum_{a,f} e^{2 \pi \I \left(-\frac{r-\delta}{8} \lk(a,a) - \lk(a,b+f) +2 \lk(f,f)-\frac{1}{2} \coh(a) \right)}
\]
if $r \equiv \delta \mod 8$ and
\[
c^{\osp}_{\coh,\sigma(b,s)}
=
\frac{e^{-\delta \frac{\pi \I}{2} \mu(M,s)} \tor(M,[2 \coh])}{\vert H_1(M;\Z) \vert} \sum_{a,f} e^{2 \pi \I \left(-\frac{r-2\delta}{8} \lk(a,a) - \lk(a,b+\delta f) + \delta \lk(f,f) - \frac{1}{2} \coh(a) \right)}
\]
if $r \equiv 2 \delta \mod 8$.
In the above formulae,
\begin{itemize}
\item $\sum_{a,f}$ denotes summation over $a,f \in H_1(M;\Z)$,
\item $\lk : H_1(M;\Z) \times H_1(M;\Z) \rightarrow \Q \slash \Z$ is the linking pairing,
\item $\sigma(b,s)$ is the $\Spinc$ structure associated to $b \in H_1(M;\Z)$ and $\Spin$ structure $s$,
\item$\mu$ is the Rokhlin invariant, and
\item $\tor$ is the appropriately normalized Reidemeister torsion.
\end{itemize}
\end{thm}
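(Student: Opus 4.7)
The plan is to derive both sides of the identity from their explicit surgery presentations on the framed link $L_\Gamma$ defined by $\Gamma$, and then to match the resulting finite-dimensional lattice sums by means of a Gauss sum reciprocity whose exact form depends on the congruence class of $r$ modulo $8$. Because $M$ is a rational homology $3$-sphere with non-degenerate plumbing matrix, both expansions are genuinely finite and controlled by the linking form on $H_1(M;\Z)$.

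First, I would expand $N_r^{\osp}(M,\coh)$ via the CGP surgery formula associated to the relative modular structure on $\cat$ from Theorem \ref{thm:relModIntro}. Colouring the components of $L_\Gamma$ by generic $\Uq$-weight modules whose weights lift $\coh$, and taking the modified trace, produces a Gaussian sum over $H_1(M;\Z)$ with phases governed by $\lk$ and amplitude a product of modified quantum dimensions and $S$-matrix entries of the kind computed in the lead-up to Theorem \ref{thm:stateSpacesIntro}. The Reidemeister torsion factor emerges as the normalization coming from the plumbing matrix determinant, and the Rokhlin invariant enters through the framing/signature correction that converts the ambient $4$-manifold framing to the canonical Spin framing.

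Second, I would expand $\Zhat^{\osp}_{\mathfrak{s}}(M;\qq)$ using the lattice formula for weakly negative definite plumbed manifolds that underlies the proof of Theorem \ref{thm:sltwoOspZhatIntro}. This presents $\Zhat^{\osp}_{\mathfrak{s}}$ as a formal series in $\qq$ with Gaussian weights controlled by the inverse plumbing matrix, hence by the linking form. Under Hypothesis \ref{hyp:techAssump}, which governs the analytic behaviour of the series near the disk of radius one, the limit $\qq \rightarrow e^{\frac{4 \pi \I}{r}}$ collapses the series to a finite Gaussian sum on $H_1(M;\Z)$, indexed by $\Spinc$ structures and the auxiliary classes $a,f$ appearing in the statement.

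The heart of the argument is matching the two finite sums, which are parametrised differently: the CGP side by colourings lifting $\coh$, the $\Zhat$ side by $\Spinc$ structures decomposed via the affine action $\sigma(b,s)$ of $H_1(M;\Z)$ on $\Spin(M)$, together with auxiliary classes $a,f$. The identification proceeds by a discrete Fourier transform on $H_1(M;\Z)$ using Gauss sum reciprocity for the linking pairing; the coefficients $c^{\osp}_{\coh,\sigma(b,s)}$ arise precisely as the Fourier kernel. The case distinction $r \equiv \pm 1, \pm 2 \mod 8$ reflects which quadratic refinement is available: a Spin refinement when $r$ is odd and a $\Spinc$ refinement when $r \equiv 2 \mod 4$, which in turn dictates the different exponents $\frac{r-\delta}{8}$ versus $\frac{r-2\delta}{8}$ and the torsion normalizations $\tor(M,[4\coh])$ versus $\tor(M,[2\coh])$. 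The main obstacle is bookkeeping these phases through the Fourier transform so that they assemble into the stated closed forms; in particular, checking that Hypothesis \ref{hyp:techAssump} suffices to exchange the formal power series limit with the finite Gaussian sum without spurious boundary contributions, and that the genericity assumption on $\coh$ eliminates the degenerate strata where the reciprocity fails.
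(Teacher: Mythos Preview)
Your outline is essentially the paper's approach: start from the CGP surgery formula for the plumbing link, apply Gauss sum reciprocity (the paper's Proposition \ref{prop:GaussRec}), and match against the lattice definition of $\Zhat^{\osp}$ term by term, with the congruence class of $r$ modulo $8$ controlling which reciprocity computation goes through.

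Two points where your attributions differ from the paper's actual mechanism and could mislead you in execution. First, the Reidemeister torsion does \emph{not} emerge from the plumbing matrix determinant; it comes from the denominator of the modified quantum dimensions. Concretely, the paper factors $N_r^{\osp}(M,\coh)=\mathcal{A}\mathcal{B}\mathcal{C}$ where $\mathcal{B}=F^-(\{e^{2\pi\I \red{r}\alpha_v/r}\})^{-1}$, and it is this factor that equals $(-1)^{b_+}\tor(M,[2\coh])$ or $(-1)^{b_+}\tor(M,[4\coh])$ depending on $\red{r}/r$. The determinant $\vert\det B\vert$ instead appears through the Gauss reciprocity normalization and combines with the torsion prefactor to give $\vert H_1(M;\Z)\vert^{-1}$. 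Second, Gauss reciprocity must be applied \emph{twice}: once to the Kirby-colour sum $\mathcal{C}_l$ over $I_r^{V(\Gamma)}$, and then again to an auxiliary inner sum $\mathcal{C}''_l$ over $(\Z/\tfrac{4}{\even}\Z)^{V(\Gamma)}$ with $\even=\gcd(2,r)$. The case split $r\equiv\pm1$ versus $r\equiv\pm2$ modulo $8$ is governed by whether this inner sum runs over $(\Z/4\Z)^{V(\Gamma)}$ or $(\Z/2\Z)^{V(\Gamma)}$, not by a choice of Spin-versus-$\Spinc$ refinement as you suggest; the Rokhlin term then appears via the congruence $\mu(M,s)\equiv\sigma-s^tBs\bmod 4$ after the second reciprocity produces the factor $e^{\delta\pi\I s^tBs/2}$ or $e^{\delta\pi\I s^tBs}$.
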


Analogues of Theorem \ref{thm:cgpVsZhatIntro} are known for $\sltwo$ \cite{costantino2023} and $\mathfrak{sl}(2 \vert 1)$ \cite{ferrari2024}, again for particular roots of unity. As in these cases, the proof of Theorem \ref{thm:cgpVsZhatIntro} is a delicate sequence of applications of reciprocity of Gauss sums. As discussed in Remark \ref{rem:ZhatVsCGP}, the cases $r \equiv \pm 2 \mod 8$ of Theorem \ref{thm:cgpVsZhatIntro} bear a strong resemblance to the corresponding cases for $\sltwo$ \cite{costantino2023}. The cases $r \equiv \pm 1 \mod 8$ do not have a counterpart in \cite{costantino2023} since in \emph{loc. cit.} only roots of unity of even order are considered. As explained in Sections \ref{sec:pm3Mod8} and \ref{sec:0Mod8}, when $r \equiv 0, \pm 3 \mod 8$ the calculations involved in proving Theorem \ref{thm:cgpVsZhatIntro} do not lead to a universal topological relation between $N_r^{\osp}$ and $\Zhat^{\osp}$. The remaining case studied in \cite{costantino2023}, namely $r \equiv 4 \mod 8$, does not relate to the present work since it is in precisely this case that the category $\cat$ is not ribbon. In view of \cite{costantino2023}, it is natural to expect Theorem \ref{thm:cgpVsZhatIntro} to hold, with minor modifications, for more general closed oriented $3$-manifolds, although we do not pursue this generalization.

Theorem \ref{thm:cgpVsZhatIntro} is another instance of the relation between $\Zhat$-invariants and previously defined quantum invariants of $3$-manifolds. A relation between Reshetikhin--Turaev invariants for the small quantum group of $\sltwo$ and $\Zhat^{\sltwo}$-invariants was conjectured in \cite{gukov2017,gukov2020}. For proofs of this conjecture, in various special cases, and its generalization to other $\mathfrak{g}$, see \cite{fuji2021,andersen2022,mori2022,costantino2023,chauhan2023,murakami2024,chauhan2024}.

To close this introduction, we mention two interesting problems which we do not address in this paper. The first is to provide a conceptual reason for the observed close similarity between the CGP and $\Zhat$-invariants associated to $\sltwo$ and $\osp$. In the semisimple setting, the quantum covering groups of Clark, Hill and Wang \cite{clark2013b} provide such an reason \cite{clark2017}. The second is to find a physical realization of the TFT $\TQFT_{\cat}$ using as a guide recent successes in this direction for other Lie (super)algebras \cite{gukov2021,creutzig2024,geerYoung2025,garnerGeerYoung2025}.

\subsection*{Acknowledgements}
The authors thank Nathan Geer, Thomas Kerler, and Cris Negron for discussions. The authors also thank the anonymous referees for comments and corrections. A.\,R. is partially supported by National Science Foundation grants DMS-2104497 (Geer) and DMS-2302363 (Young). F.\,C. is supported by CIMI Labex ANR 11-LABX-0040 at IMT Toulouse within the program ANR-11-IDEX-0002-02. M.\,H. is partially supported through the NSF-RTG grant DMS-2135960. M.\,B.\,Y. is partially supported by National Science Foundation grants DMS-2302363 and DMS-2440471 and a Simons Foundation Collaboration Grant for Mathematicians (Award ID 853541).

\section{Preliminary material}

The ground field is $\C$ unless stated otherwise.

\subsection{Superalgebra}
\label{sec:supAlg}

Our superalgebra conventions match \cite[I-Supersymmetry]{deligne1999}. The parity of a homogeneous element $v$ of a super vector space $V=V_{\p 0} \oplus V_{\p 1}$ is denoted $\p v \in \Ztwo$. Morphisms of super vector spaces are parity preserving linear maps. The tensor product of super vector spaces $V$ and $W$ is the tensor product of the underlying vector spaces with $\Ztwo$-grading given by
\[
(V \otimes W)_{\p p} = \bigoplus_{\p a + \p b = \p p} V_{\p a} \otimes W_{\p b}.
\]
A (left) module over a (associative) superalgebra $A$ is a super vector space $M$ with an $A$-module structure for which the action map $A \otimes M \rightarrow M$ is a morphism of super vector spaces. Given homogeneous elements $a,b \in A$, set $[a,b] = ab -(-1)^{\p a \p b} ba$.

\subsection{Monoidal categories}
\label{sec:monCat}

Our conventions for monoidal categories match \cite{etingof2015}.

Let $\cat$ be a $\C$-linear abelian monoidal category. We assume that the functor $\otimes : \cat \times \cat \rightarrow \cat$ is $\C$-bilinear, the monoidal unit $\unit \in \cat$ is simple and the $\C$-algebra map $\C \to \End_{\cat}(\unit), k \mapsto k \cdot \id_\unit$, is an isomorphism. If $\cat$ is in addition rigid, braided and has a compatible twist, then $\cat$ is called a \emph{$\C$-linear ribbon category}. The left and right duality structure maps are
\begin{equation*}% \label{duality}
\tev_V:V^{\vee} \otimes V \to \unit,
\qquad \tcoev_V:\unit \to V  \otimes V^{\vee}
\end{equation*}
and
\begin{equation*}
\qquad \ev_V:V\otimes V^{\vee}  \to \unit,
\qquad \coev_V: \unit \to V^{\vee}  \otimes V, \end{equation*}
respectively, the braiding is $c=\{c_{V,W}: V \otimes W \rightarrow W \otimes V\}_{V,W \in \cat}$ and the twist is $\theta=\{\theta_V: V \rightarrow V\}_{V \in \cat}$. An object $V\in\cat$ is \emph{regular} if $\tev_V$ is an epimorphism. 

In diagrammatic computations with ribbon categories, we read diagrams from left to right and bottom to top. Basic morphisms are
\[
\id_V
=
\begin{tikzpicture}[anchorbase]
\draw[->,thick] (0,0) -- node[left] {\small$V$} (0,1);
\end{tikzpicture}
\qquad ,\qquad
\id_{V^{\vee}}
=
\begin{tikzpicture}[anchorbase]
\draw[<-,thick] (0,0) -- node[left] {\small$V$} (0,1);
\end{tikzpicture}
\]
\[
\tev_V
=
\begin{tikzpicture}[anchorbase]
\draw[->,thick] (0,0)  arc (0:180:0.5 and 0.75);
\node at (-1.4,0)  {$V$};
\end{tikzpicture}
\qquad, \qquad
\tcoev_V
=
\begin{tikzpicture}[anchorbase]
\draw[<-,thick] (0,0)  arc (180:360:0.5 and 0.75);
\node at (-0.5,-0.1)  {$V$};
\end{tikzpicture}
\]
\[
\ev_V
=
\begin{tikzpicture}[anchorbase]
\draw[<-,thick] (0,0)  arc (0:180:0.5 and 0.75);
\node at (0.4,0)  {$V$};
\end{tikzpicture}
\qquad, \qquad
\coev_V
=
\begin{tikzpicture}[anchorbase]
\draw[->,thick] (0,0)  arc (180:360:0.5 and 0.75);
\node at (1.5,-0.1)  {$V$};
\end{tikzpicture}
\]
\[
c_{V,W}
=
\begin{tikzpicture}[anchorbase]
\draw[->,thick] (0.5,0) -- node[right,near start] {\small $W$} (0,1);
\draw[->,thick,cross line] (0,0) -- node[left,near start] {\small$V$} (0.5,1);
\end{tikzpicture}
\qquad, \qquad
\theta_V
=
\begin{tikzpicture}[anchorbase]
\draw[->,thick,rounded corners=8pt] (0.25,0.25) -- (0,0.5) -- (0,1);
\draw[thick,rounded corners=8pt,cross line] (0,0) -- (0,0.5) -- (0.25,0.75);
\draw[thick] (0.25,0.75) to [out=30,in=330] (0.25,0.25);
\node at (-0.2,0.2)  {$V$};
\end{tikzpicture}.
\]

Let $\mathcal{R}_{\cat}$ be the ribbon category of $\cat$-coloured ribbon graphs in $\R^2 \times [0,1]$ and $F_{\cat} : \mathcal{R}_{\cat} \rightarrow \cat$ the associated Reshetikhin--Turaev functor \cite[Theorem I.2.5]{turaev1994}. Two formal $\C$-linear combinations of $\cat$-coloured ribbon graphs are \emph{skein equivalent} if their images under $F_{\cat}$ agree. The corresponding equivalence relation is denoted $\dot{=}$.

\subsection{Relative modular categories}
\label{sec:relModCat}

We recall a number of definitions from \cite{geer2011,costantino2014,derenzi2022}. Let $\cat$ be a $\C$-linear abelian ribbon category. 

Let $V , W \in \cat$. Recall that right partial trace along $W$ is the map
\begin{eqnarray*}
\ptr_W : \End_{\cat}(V \otimes W) & \rightarrow & \End_{\cat}(V) \\
f & \mapsto & (\id_V \otimes \ev_W) \circ (f \otimes \id_{W^{\vee}}) \circ (\id_V \otimes \tcoev_W).
\end{eqnarray*}

\begin{Def}
A full subcategory $\mathcal{I} \subset \cat$ is an {\em ideal} if it has the following properties:
\begin{enumerate}
\item If $U\in \mathcal{I}$ and $V \in \cat$, then $U\otimes V \in \mathcal{I}$.
\item If $U \in \mathcal{I}$ and $V \in \cat$ and there exist morphisms $f:V\to U$ and $g:U\to V$ satisfying $g \circ f=\id_{V}$, then $V\in \mathcal{I}$.
\end{enumerate}
\end{Def}

\begin{Def}
\label{def:mtrace}
\begin{enumerate}
\item A \emph{modified trace} on an ideal $\mathcal{I} \subset \cat$ is a family of $\C$-linear functions 
$\mt=\{\mt_V:\End_\cat(V) \rightarrow \C \mid V \in \mathcal{I} \}$ with the following properties:
\begin{enumerate}
\item \emph{Cyclicity}: For all $V,W \in \mathcal{I}$ and $f \in \Hom_{\cat}(W,V)$ and $g \in \Hom_{\cat}(V,W)$, there is an equality $\mt_V(f \circ g)=\mt_W(g \circ f)$.
\item \emph{Partial trace property}: For all $V \in \mathcal{I}$, $W \in \cat$ and $f\in\End_{\cat}(V\otimes W)$, there is an equality $\mt_{V\otimes W}(f)=\mt_V(\ptr_W(f))$.
\end{enumerate}

\item The \emph{modified dimension} of $V\in \mathcal{I}$ is $\qd(V)=\mt_V(\id_V)$.
\end{enumerate}
\end{Def}

\begin{Def}
\begin{enumerate}
\item A set $\mathcal{E}=\{ V_i \mid i \in J \}$ of objects of $\cat$ is \emph{dominating} if for any $V \in \cat$ there exist indices $\{i_1,\dots,i_m \} \subseteq J $ and morphisms $\iota_k \in \Hom_{\cat}(V_{i_k},V)$, $s_k \in \Hom_{\cat}(V,V_{i_k})$ such that $\id_{V}=\sum_{k=1}^m \iota_k \circ s_k$.

\item A dominating set $\mathcal{E}$ is \emph{completely reduced} if $\dim_\C \Hom_{\cat}(V_i,V_j)=\delta_{ij}$ for all $i,j \in J$.
\end{enumerate}
\end{Def}

Let $(\FR,+)$ be an abelian group. We often view $\FR$ as a discrete monoidal category with object set $\FR$.

\begin{Def}
\label{def:free}
A \emph{free realization of $\FR$ in $\cat$} is a monoidal functor $\sigma: \FR \rightarrow \cat$, $k \mapsto \sigma_k$, such that
\begin{enumerate}
\item $\sigma_0=\mathbb{I}$,
\item $\theta_{\sigma_k}=\id_{\sigma_k} \text{ for all } k \in \FR$, and
\item if $V \otimes \sigma_k \simeq V$ for a simple object $V \in \cat$, then $k=0$.  
\end{enumerate}
\end{Def}

We often identify $\sigma: \FR \rightarrow \cat$ with the set of objects $\sigma_{\FR} := \{\sigma_k \mid k \in \FR\}$, omitting from the notation the monoidal coherence data.

\begin{Def}
\label{def:Gstr}
Let $(\Gr,+)$ be an abelian group.
\begin{enumerate}
\item A subset $\SSS \subset \Gr$ is \emph{symmetric} if $\SSS=-\SSS$ and \emph{small} if $\bigcup_{i=1}^n (g_i+\SSS) \neq \Gr$ for all $g_1,\ldots ,g_n\in \Gr$.

\item A \emph{$\Gr$-grading on $\cat$} is an equivalence of $\C$-linear abelian categories $\cat \simeq \bigoplus_{g \in \Gr} \cat_g$, where $\{\cat_g \mid g \in \Gr\}$ are full subcategories of $\cat$, which has the following properties:
\begin{enumerate}
\item $\mathbb{I} \in \cat_0$.
\item If $V\in\cat_g$, then $V^{\vee}\in\cat_{-g}$.
\item If $V\in\cat_g$ and $V^{\prime} \in \cat_{g^{\prime}}$, then $V\otimes V^{\prime}\in\cat_{g+g^{\prime}}$.
\end{enumerate}

\item A $\Gr$-graded abelian category $\cat$ is \emph{generically semisimple} if there exists a small symmetric subset $\SSS \subset \Gr$ such that each subcategory $\cat_g$, $g \in \Gr \setminus \SSS$, is semisimple.
\end{enumerate}
\end{Def}

\begin{Def}
\label{def:preMod}
Let $\Gr$ and $\FR$ be abelian groups, $\SSS \subset \Gr$ a small symmetric subset and $\cat$ a $\C$-linear abelian ribbon category. Suppose that the following data is given:
\begin{enumerate}
\item A $\Gr$-grading on $\cat$.
\item A free realization $\sigma : \FR \rightarrow \cat_0$.
\item A non-zero modified trace $\mt$ on the ideal of projective objects of $\cat$.
\end{enumerate}
Call $\cat$ a \emph{pre-modular $\Gr$-category relative to $(\FR,\SSS)$} if it has the following properties:
\begin{enumerate}
\item \emph{Generic semisimplicity}: \label{def:genSS} For each $g \in \Gr \setminus \SSS$, there exists a finite set of regular simple objects $\Theta(g):=\{ V_i \mid i \in I_g  \}$ such that
$$\Theta(g) \otimes \sigma_{\FR}:=\{ V_i \otimes \sigma_k \mid i \in I_g, \; k\in \FR  \}$$
is a completely reduced dominating set for $\cat_g$.

\item
%\emph{Compatibility}:
\label{def:compat}
There exists a bicharacter $\psi: \Gr \times \FR \rightarrow \C^{\times}$ such that
  \begin{equation*}
    \label{eq:psi}
    c_{\sigma_k,V}\circ c_{V,\sigma_k}= \psi(g,k) \cdot  \id_{V \otimes \sigma_k}
  \end{equation*}
for all $g\in \Gr$, $V \in \cat_g$ and $k \in \FR$.
\end{enumerate}
\end{Def}
 
\begin{Def}
\label{def:ndeg}
Let $\cat$ be a pre-modular $\Gr$-category relative to $(\FR,\SSS)$.
\begin{enumerate}
\item The \emph{Kirby colour of index $g \in \Gr \setminus \SSS$} is $\Omega_g:= \sum_{V \in \Theta(g)}\qd(V) \cdot V$.
\item The \emph{stabilization coefficients} $\Delta_\pm\in \C$ are defined by the skein equivalences
$$\epsh{Fig-nondeg}{15ex}
$$
for any $g \in \Gr \setminus \SSS$ and $V\in \cat_g$.
%See \cite[Lemma 5.10]{costantino2014}.
\item The pre-modular $\Gr$-category $\cat$ is \emph{non-degenerate} if $\Delta_{+}\Delta_{-}\neq 0$. 
\end{enumerate}
\end{Def}

\begin{Def}
%[Relative modular category]
\label{def:modG}
A \emph{modular $\Gr$-category relative to $(\FR,\SSS)$} is a pre-modular $\Gr$-category $\mathcal{C}$ relative to $(\FR,\SSS)$ for which there exists a scalar $\zeta \in \C^{\times}$, the \emph{relative modularity parameter}, such that
\begin{equation}\label{eq:mod}
    \epsh{relative_modularity}{10ex}
\end{equation}
for all $g,h \in \Gr \setminus \SSS$ and $i,j \in I_g$.
\end{Def}

The relative modularity parameter satisfies $\zeta = \Delta_+ \Delta_-$ \cite[Proposition 1.2]{derenzi2022}. In particular, relative modular categories are non-degenerate relative pre-modular.

\subsection{\texorpdfstring{$\Spin$ and $\Spinc$ structures on $3$-manifolds}{Spin and Spinc structures on 3-manifolds}}
\label{sec:surgeryTopology}

Following \cite[\S 2.2]{deloup2005}, we recall basic topology of $3$-manifolds obtained by surgery on links in $S^3$.

Let $L \subset S^3$ be a framed oriented link with connected components $V$ and $\vert V \vert \times \vert V \vert$ linking matrix $B$. Let $M$ be the closed oriented $3$-manifold obtained by performing integral surgery on $L$. For $\Gr$ an abelian group, there are canonical isomorphisms
\begin{equation}
\label{eq:firstHom}
H_1(M; \Z)
\simeq
\Z^{V} \slash B \Z^{V}
\end{equation}
and
\begin{equation}
\label{eq:firstCohom}
H^1(M;\Gr)
\simeq 
\{\phi \in \Gr^{V} \vert B \phi =0 \}.
\end{equation}
Here we write $\Z^V$ for the free abelian group on the set $V$ and view its elements as column vectors. Throughout the paper we assume that $B$ is non-singular, so that $M$ is a rational homology sphere. In this case, $\vert \det B \vert = \vert H_1(M;\Z) \vert$ and, with respect to the isomorphism \eqref{eq:firstHom}, the linking pairing is
\begin{equation*}
\label{eq:linkingNum}
\lk: H_1(M;\Z) \otimes H_1(M;\Z) \rightarrow \Q \slash \Z,
\qquad
a \otimes b \mapsto a^t B^{-1} b \mod 1
\end{equation*}
where $(-)^t$ is transposition.

Denote by $\Spin(M)$ and $\Spinc(M)$ the sets of equivalence classes of $\Spin$ and $\Spinc$ structures on $M$, respectively. There are identifications
\[
\Spin(M)
=
\{s \in \left(\Z\slash 2 \Z \right)^{V} \mid \sum_{j \in V} B_{ij} s_j \equiv B_{ii} \mod 2 \mbox{ for all } i \in V\}
\]
and
\[
\Spinc(M)
=
\{K \in \Z^{V} \slash 2 B \Z^{V} \mid K_i \equiv B_{ii} \mod 2 \mbox{ for all } i \in V\}.
\]
There is a canonical surjective map
\[
\sigma: H_1(M;\Z) \times \Spin(M) \rightarrow \Spinc(M),
\qquad
\sigma(b,s) = 2b+ i(s),
\]
where $i(s) = B \tilde{s}$ for any lift $\tilde{s} \in \Z^{V}$ of $s \in \left(\Z\slash 2 \Z \right)^{V}$. Finally, given $s \in \Spin(M)$, the Rokhlin invariant $\mu(M,s) \in \Z \slash 16 \Z$ of $M$ with spin structure $s$ satisfies
\begin{equation}
\label{eq:RokhlinMod4}
\mu(M,s) \equiv \sigma - s^t B s \mod 4,
\end{equation}
where $\sigma$ is the signature of $B$.

\section{The category of weight \texorpdfstring{$\Uq$}{Uq(osp(1|2))}-modules}
\label{sec:unrolledOsp}

\subsection{Super quantum integers}
\label{sec:supInt}

We introduce super variants of the standard quantum integers. Our conventions differ slightly from those in the literature (\emph{cf.} \cite[\S 2]{kulish1989b}, \cite[\S 2.5]{clark2013}).

Let $v$ be an indeterminate and $\mathcal{A}=\Z[\q,\q^{-1}]$. For an integer $n \geq 0$, set
\[
\label{eq:superQuantIntPoly}
\pser{n}
=
\frac{\q^{-n} - (-\q)^n}{\q+\q^{-1}}
=
\sum_{i=0}^{n-1} (-1)^{n+1+i} \q^{n-1-2i} \in \mathcal{A}
\]
and
\[
\pser{n} !=
\begin{cases}
\prod_{i=1}^n \pser{i} & \mbox{if } n \geq 1, \\
1 & \mbox{if } n =0
\end{cases},
\qquad
\pserbin{n}{k} =
\begin{cases}
\frac{\pser{n}!}{\pser{n-k}!\pser{k}!} & \mbox{if } 0 \leq k \leq n, \\
0 & \mbox{if } k <0.
\end{cases}
\]

\begin{Lem}
\label{lem:superPascal}
    For integers $1 \leq k \leq n$, there are equalities
    \[
    \pserbin{n+1}{k}=(-\q)^{n-k+1}\pserbin{n}{k-1}+\q^{-k}\pserbin{n}{k},
    \]
    \[
    \pserbin{n+1}{k}=\q^{k-n-1}\pserbin{n}{k-1}+(-\q)^{k}\pserbin{n}{k}.
    \]
\end{Lem}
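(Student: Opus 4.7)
The plan is to reduce each super-Pascal identity for the binomials to a single underlying identity for the super quantum integers themselves, and then establish that integer identity by a direct two-line computation from the defining formula $\pser{n} = (v^{-n}-(-v)^n)/(v+v^{-1})$.

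More precisely, expanding the factorial expressions gives
\[
\pserbin{n+1}{k}=\frac{\pser{n+1}\,\pser{n}!}{\pser{n+1-k}!\,\pser{k}!},
\qquad
\pserbin{n}{k-1}=\frac{\pser{n}!}{\pser{n-k+1}!\,\pser{k-1}!},
\qquad
\pserbin{n}{k}=\frac{\pser{n}!}{\pser{n-k}!\,\pser{k}!}.
\]
Clearing the common factor $\pser{n}!/(\pser{n-k+1}!\,\pser{k}!)$, the first identity becomes equivalent to
\[
\pser{n+1}=(-v)^{n-k+1}\,\pser{k}+v^{-k}\,\pser{n-k+1},
\]
and, by the same clearing procedure, the second identity reduces to
\[
\pser{n+1}=v^{k-n-1}\,\pser{k}+(-v)^{k}\,\pser{n-k+1}.
\]
So the content of the lemma is exactly these two scalar identities.

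For each of these, I would substitute the defining expression $\pser{m}=(v^{-m}-(-v)^m)/(v+v^{-1})$ into the right-hand side and clear the denominator $v+v^{-1}$. In each case the two cross terms that mix $v^{-k}$ (resp.\ $v^{k-n-1}$) with $(-v)^{n-k+1}$ (resp.\ $(-v)^{k}$) turn out to be equal with opposite signs and cancel; the two surviving terms are $v^{-(n+1)}$ and $-(-v)^{n+1}$, which sum to $(v+v^{-1})\,\pser{n+1}$. This is an elementary check and is where the whole argument sits.

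No step presents a genuine obstacle: this is a formal algebraic identity in $\mathcal{A}=\Z[v,v^{-1}]$, and the super sign conventions absorbed in $\pser{n}$ make the cancellation work as cleanly as in the non-super case. If preferred, one can alternatively argue by induction on $n$, using the case $k=1$ (which amounts to $\pser{n+1}=(-v)^n\,\pser{1}+v^{-1}\pser{n}=(-v)^n+v^{-1}\pser{n}$, immediate from the definition) together with symmetry $\pserbin{n}{k}=\pserbin{n}{n-k}$ to deduce the second form from the first; but the direct computation above is shorter and simultaneously yields both identities.
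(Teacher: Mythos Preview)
Your proposal is correct and is exactly the direct verification the paper alludes to with its one-line proof ``This is a direct verification.'' You have simply spelled out the reduction to the integer identity $\pser{n+1}=(-v)^{n-k+1}\pser{k}+v^{-k}\pser{n-k+1}$ (and its companion) and the two-term cancellation that establishes it, which is precisely what the paper leaves implicit.
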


\begin{proof}
This is a direct verification.
\end{proof}

%\begin{proof}
%The definition gives
%\[
%(-\q)^{n-k+1}\pserbin{n}{k-1}+\q^{-k}\pserbin{n}{k}
%=
%\pserbin{n+1}{k}\left((-\q)^{n-k+1}\frac{\pser{k}}{\pser{n+1}}+\q^{-k}\frac{\pser{n+1-k}}{\pser{n+1}}\right).
%\]
%Direct verification shows that the term in parentheses is equal to one, proving the first equality. 
%% We compute
%% \begin{multline*}
%% (-\q)^{n-k+1}\pserbin{n}{k-1}+\q^{-k}\pserbin{n}{k}
%% =
%% \pserbin{n+1}{k}\left((-\q)^{n-k+1}\frac{\pser{k}}{\pser{n+1}}+\q^{-k}\frac{\pser{n+1-k}}{\pser{n+1}}\right) \\
%% =
%% \pserbin{n+1}{k}
%% \left(
%% \frac{
%% (-\q)^{n-k+1}(\q^{-k}-(-1)^k\q^k)
%% +\q^{-k}(\q^{-(n+1-k)}-(-1)^{n+1-k}\q^{n+1-k}}
%% {\q^{-(n+1)}+(-1)^{n}\q^{n+1}}
%% \right)
%% % =
%% % \pserbin{n+1}{j}
%% % \left(
%% % \frac{
%% % (-1)^{n-k+1}q^{n-2k+1}-(-1)^{n+1}q^{n+1}
%% % +q^{-(n+1)}-(-1)^{n+1-k}q^{n+1-2j}}
%% % {q^{-(n+1)}-(-1)^{n+1}q^{n+1}}
%% % \right)
%% =\pserbin{n+1}{k}. \qedhere
%% \end{multline*}
%The second equality is proved in the same way.
%\end{proof}

\begin{Lem}
\label{lem:vanSum}
For any integer $n \geq 0$, there is an equality
\[
\sum_{k=0}^n (-1)^{\frac{k(k+1)}{2}} \q^{k(n-1)} \pserbin{n}{k}
=
\begin{cases}
1 & \mbox{if } n=0,\\
0 & \mbox{if } n >0.
\end{cases}
\]
\end{Lem}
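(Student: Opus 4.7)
Let $S_n$ denote the sum on the left-hand side. My plan is to prove the identity by induction on $n$, using the first recursion in Lemma \ref{lem:superPascal} to relate $S_{n+1}$ to $S_n$.

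The base case $n=0$ is immediate (the only term is $k=0$, which contributes $1$). For the inductive step, I would substitute
\[
\pserbin{n+1}{k}=(-\q)^{n-k+1}\pserbin{n}{k-1}+\q^{-k}\pserbin{n}{k}
\]
into $S_{n+1} = \sum_{k=0}^{n+1} (-1)^{k(k+1)/2} \q^{kn} \pserbin{n+1}{k}$, and split the result as a sum $A+B$ of two pieces. The piece $B$ coming from the $\q^{-k}\pserbin{n}{k}$ term immediately simplifies, after combining $\q^{kn}\cdot \q^{-k} = \q^{k(n-1)}$, to exactly $S_n$. The piece $A$ requires reindexing $j=k-1$; the computation of the exponent of the sign proceeds via the identity $\tfrac{(j+1)(j+2)}{2} - j = \tfrac{j(j+1)}{2}+1$, and the $\q$-exponent reduces to $j(n-1)+2n$. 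After collecting, $A$ factors as a scalar multiple of $S_n$, giving
\[
S_{n+1} = \bigl(1 + (-1)^{n+1} \q^{2n}\bigr)\, S_n.
\]

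Applying this recurrence once starting from $S_0 = 1$ gives $S_1 = (1-1)\cdot 1 = 0$, which kills the recursion: $S_n = 0$ for all $n \geq 1$, as required.

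The main obstacle here is purely bookkeeping: correctly reducing the exponent $\tfrac{(j+1)(j+2)}{2}+n-j$ modulo $2$ and the $\q$-exponent $(j+2)n - j$ after the index shift in $A$, so as to recognize the result as a scalar times $S_n$. Everything else is formal once the right recurrence is established; in particular, no super quantum binomial theorem or representation-theoretic input is needed. An alternative approach would be to derive the identity from a super analogue of the $q$-binomial expansion of $\prod_i(1 - x \q^{\alpha_i})$, but the inductive route via Lemma \ref{lem:superPascal} is cleaner and stays within the elementary framework set up in Section \ref{sec:supInt}.
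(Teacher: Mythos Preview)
Your proof is correct and follows essentially the same approach as the paper: induction on $n$ using the Pascal-type recursion of Lemma~\ref{lem:superPascal}. The paper's own proof simply says ``the equality is proved using Lemma~\ref{lem:superPascal} and induction on $n$,'' so you have filled in precisely the details it omits.
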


\begin{proof}
The equality is proved using Lemma \ref{lem:superPascal} and induction on $n$.
\end{proof}

% \begin{proof}
% The statement is immediate when $n \leq 1$. Assume that the statement holds for $n-1$. Using the second equality of Lemma \ref{lem:superPascal},
% % with the convention that $\pserbin{n-1}{-1}$ is zero,
% we compute
% \begin{multline*}
% \sum_{k=0}^n (-1)^{\frac{(k+1)k}{2}} \q^{k(n-1)} \pserbin{n}{l}
% =
% \sum_{k=0}^n (-1)^{\frac{(k+1)k}{2}} \q^{k(n-1)} \left( \q^{-l} \pserbin{n-1}{k} + (-\q)^{n-k} \pserbin{n-1}{k-1} \right) \\
% =
% \sum_{k=0}^{n-1} (-1)^{\frac{(k+1)k}{2}} \q^{k((n-1)-1)} \pserbin{n-1}{k} + (-1)^n \sum_{k=0}^n (-1)^{\frac{(k+1)k}{2}-k} \q^{(k+1)n -2k} \pserbin{n-1}{k-1}.
% \end{multline*}
% In the final line, the first sum vanishes by the inductive hypothesis while setting $j=k-1$ in the second sum gives
% \[
% (-1)^n \q^{2(n-1)} \sum_{j=0}^{n-1} (-1)^{\frac{(j+1)j}{2}} \q^{j((n-1)-1)} \pserbin{n-1}{j},
% \]
% which vanishes by the inductive hypothesis.
% \end{proof}

\subsection{The quantum group of \texorpdfstring{$\osp$}{osp(1|2)}}
\label{sec:UqospGen}

\begin{Def} \label{def:quantum_group}
Let $\UqG$ be the unital superalgebra over $\Q(\q)$ with generators $K$, $K^{-1}$ of parity $\p 0$ and $E$, $F$ of parity $\p 1$ and relations 
\[
KK^{-1}=K^{-1}K=1,
\qquad
KE=\q^2 EK,
\qquad
KF=\q^{-2} FK,
\qquad
[E,F]=\frac{K-K^{-1}}{\q-\q^{-1}}.
\]
\end{Def}

There is a unique Hopf superalgebra structure on $\UqG$ with counit $\epsilon$, coproduct $\Delta$ and antipode $S$ defined on generators by
\[
\epsilon(K)=1,
\qquad
\epsilon(E)=\epsilon(F)=0,
\]
\[
\Delta (K) = K \otimes K,
\qquad
\Delta (E) = 1 \otimes E + E \otimes K,
\qquad
\Delta (F) = K^{-1} \otimes F + F \otimes 1 ,
\]
\[
%\label{E:HopfUh}
S(K)=K^{-1},
\qquad
S(E)= -E K^{-1},
\qquad
S(F)= -KF.
\]

For $i,n \in \Z$ with $i \geq 1$, define elements of $\UqG$ by
\[
\pser{K;n}
=
\frac{K \q^n-K^{-1} (-\q)^{-n}}{\q-\q^{-1}},
\qquad
\pserbin{K;n}{i}=\prod_{j=1}^i \dfrac{\pser{K;n+j-i}}{\pser{j}}.
\]
Set $\pserbin{K;n}{0} = 1$. {For an invertible scalar $z$, we define $\pser{z;n}$ and $\pserbin{z;n}{i}$ analogously.} A direct computation shows that
\begin{align} \label{eq:KPserRel}
    (-1)^k\pser{j}\pser{K;i+k} + \pser{k}\pser{K;i-j}
    = 
    \pser{j+k}\pser{K;i}
\end{align}
for all $i,j,k \in \Z$. Compare with \cite[Eqn. (2.8)]{clark2013}.

\begin{Lem}[{\emph{cf}. \cite[Lemma 2.8]{clark2013}}]
\label{lem:EFPow}
For each integer $n \geq 1$, there are equalities
\begin{equation} \label{eq:[E,F^n]}
[E,F^n] = \pser{n} F^{n-1} \pser{K;-n+1},    
\end{equation}
\begin{equation} \label{eq:[F,E^n]}
[F,E^n] = (-1)^n \pser{n} E^{n-1} \pser{K;n-1}.
\end{equation}
\end{Lem}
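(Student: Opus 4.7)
The plan is to prove both identities by induction on $n\geq 1$.

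For the first identity, the base case $n=1$ is immediate from the defining relation $[E,F]=(K-K^{-1})/(\q-\q^{-1})=\pser{K;0}$ together with $\pser{1}=1$. For the inductive step I apply the super Leibniz rule for the supercommutator; since $\bar E = \bar F = \bar 1$,
\[
[E,F^{n+1}] = [E,F]\,F^n + (-1)^{\bar E \bar F}\, F\,[E,F^n] = \pser{K;0}\,F^n - F\,[E,F^n].
\]
Substituting the inductive hypothesis into the second summand gives
\[
[E,F^{n+1}] = \pser{K;0}\,F^n - \pser{n}\, F^n\, \pser{K;-n+1}.
\]
Using $KF^n=\q^{-2n}F^n K$ and $K^{-1}F^n=\q^{2n}F^n K^{-1}$ one recognizes $\pser{K;0}\,F^n = F^n\,\pser{K;-2n}$, so the desired formula reduces to the scalar identity
\[
\pser{K;-2n} - \pser{n}\,\pser{K;-n+1} = \pser{n+1}\,\pser{K;-n},
\]
which is exactly the $(j,k,i)=(n,1,-n)$ case of \eqref{eq:KPserRel}.

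The second identity follows by the symmetric argument: run the analogous induction with the roles of $E$ and $F$ interchanged, using $[F,E^{n+1}] = [F,E]\,E^n - E\,[F,E^n]$ together with the dual commutation rules $KE^n=\q^{2n}E^n K$ and $K^{-1}E^n=\q^{-2n}E^n K^{-1}$. After substituting the inductive hypothesis and sliding the Cartan factor past $E$, the identity again collapses to a specialization of \eqref{eq:KPserRel}, now at a positive shift of the Cartan index; the prefactor $(-1)^n$ records the sign accumulated by iterating the super Leibniz rule. Alternatively, one can deduce the second identity from the first by applying the superalgebra (anti)automorphism of $\UqG$ that exchanges $E\leftrightarrow F$ and inverts $K$, which exchanges the two statements up to an overall sign.

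I expect no genuine obstacle. The only substantive input is the identity \eqref{eq:KPserRel}, which functions as a super-Pascal rule for the elements $\pser{K;\bullet}$; everything else is sign bookkeeping, with the main care needed for the $(-1)^{\bar a \bar b}$ factor in the super Leibniz rule and the sign hidden inside $(-\q)^{-n}$ in the definition of $\pser{K;n}$.
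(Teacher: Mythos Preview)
Your proof is correct and follows essentially the same approach as the paper: induction on $n$, with the inductive step reducing to a scalar identity among the Cartan elements $\pser{K;\bullet}$. Your organization is marginally cleaner in that you invoke the super Leibniz rule and then cite \eqref{eq:KPserRel} explicitly for the final reduction, whereas the paper expands $EF^{n+1}=(EF^n)F$ and finishes by a direct verification; these are equivalent bookkeeping choices rather than a different idea.
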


\begin{proof}
We prove only equation \eqref{eq:[E,F^n]}. We proceed by induction on $n$. The case $n=1$ is a defining relation of $\UqG$. Assuming equation \eqref{eq:[E,F^n]} holds for $n$, we compute
\begin{eqnarray*}
EF^{n+1}
&=&
(-1)^nF^nEF+\pser{n}F^{n-1}\pser{K;-n+1}F \\
% &=&
% (-1)^nF^n(-FE+\pser{K;0})+\pser{n}F^{n}\pser{\q^{-2}K;-n+1} \\
&=&
(-1)^{n+1}F^{n+1}E+(-1)^nF^n\pser{K;0}+\pser{n}F^{n}\pser{\q^{-2}K;-n+1}
\end{eqnarray*}
so that
\[
[E,F^{n+1}]
=
F^n\left((-1)^n\frac{K-K^{-1}}{\q-\q^{-1}}+
\frac{\q^{-n} - (-1)^n \q^n}{\q+\q^{-1}}
\cdot
\frac{K \q^{-n-1}+(-1)^{n} K^{-1} \q^{n+1}}{\q-\q^{-1}}
\right),
\]
which is seen to equal $F^n\pser{n+1}\pser{K;-(n+1)+1}$ by a direct calculation.
% \begin{eqnarray*}
% [E,F^{n+1}]
% &=&
% F^n\left((-1)^n\frac{K-K^{-1}}{\q-\q^{-1}}+
% \frac{\q^{-n} - (-1)^n \q^n}{\q+\q^{-1}}
% \cdot
% \frac{K \q^{-n-1}+(-1)^{n} K^{-1} \q^{n+1}}{\q-\q^{-1}}
% \right) \\
% &=&
% % F^n\left(\frac{K((-1)^n\q+\q^{-2n-1})+(-1)^{n+1}K^{-1}(\q^{-1}+(-1)^n\q^{2n+1})}{(\q+\q^{-1})(\q-\q^{-1})}
% % \right) \\
% % &=&
% F^n\left(\frac{(-1)^n\q^{n+1}+\q^{-n-1}}{\q+\q^{-1}}\right)\left(\frac{K\q^{-n}+(-1)^{n+1}K^{-1}\q^n}{\q-\q^{-1}}
% \right) \\
% &=&
% F^n\pser{n+1}\pser{K;-(n+1)+1}.
% \end{eqnarray*}
\end{proof}

\begin{Lem}[{\emph{cf.} \cite[Lemma 4.6]{clark2013}}]
\label{lem:EPow}
For each integer $n \geq 1$, there are equalities
\begin{equation} \label{eq:Delta(En)}
\Delta (E^n)
=
\sum_{j=0}^n \pserbin{n}{j}(-\q)^{j(n-j)} E^j \otimes E^{n-j} K^j,
\end{equation}
\begin{equation*} \label{eq:Delta(Fn)}
\Delta (F^n)
=
\sum_{j=0}^n \pserbin{n}{j}\q^{-j(n-j)} K^{-j} F^{n-j} \otimes F^{j}.
\end{equation*}
\end{Lem}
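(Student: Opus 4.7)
The plan is to prove both identities by induction on $n$, with the base case $n=1$ following directly from the definition of $\Delta$ on the generators. For the inductive step for $E^n$, I would compute
\[
\Delta(E^{n+1}) = \Delta(E^n) \cdot \Delta(E)
\]
using the super tensor product multiplication $(a \otimes b)(c \otimes d) = (-1)^{\p{b} \p{c}} ac \otimes bd$. Since $E$ is odd and $K$ is even, the parity of $E^{n-j} K^j$ is $\overline{n-j}$, and the relevant sign arises when commuting $E$ past $E^{n-j} K^j$ in the second tensor factor of $(E \otimes K)$. The commutation relation $K^j E = \q^{2j} E K^j$ then produces a power of $\q$ when collecting the first tensor factor. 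Under an index shift in the term coming from multiplication against $E \otimes K$, the coefficient of $E^{j} \otimes E^{n+1-j} K^{j}$ becomes a sum of two terms, one coming from each summand of $\Delta(E)$.

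The main computational content is then to verify the identity
\[
\pserbin{n+1}{j} (-\q)^{j(n+1-j)}
=
\pserbin{n}{j}(-\q)^{j(n-j)} \q^{2j} + \pserbin{n}{j-1}(-\q)^{(j-1)(n-j+1)} (-1)^{n-j+1}.
\]
Upon factoring and simplifying the signs (using $(-\q)^{j} = (-1)^{j} \q^{j}$) this reduces precisely to the second of the two super Pascal identities from Lemma \ref{lem:superPascal}, namely $\pserbin{n+1}{j} = \q^{j-n-1} \pserbin{n}{j-1} + (-\q)^{j} \pserbin{n}{j}$. The formula for $\Delta(F^n)$ is proved in a completely parallel way, using $\Delta(F^{n+1}) = \Delta(F) \cdot \Delta(F^n)$ together with $F K = \q^2 KF$ and the first super Pascal identity.

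The main obstacle is book-keeping of the sign $(-1)^{n-j}$ coming from the super tensor product, since it must combine correctly with the pre-existing factor $(-\q)^{j(n-j)}$ to produce $(-\q)^{j(n+1-j)}$ after applying one of the identities of Lemma \ref{lem:superPascal}. The match is not immediate but is forced by the specific signs appearing in the super quantum binomials, which is precisely why the super Pascal rules take the stated form. No further input beyond Lemma \ref{lem:superPascal} and the defining relations of $\UqG$ is needed.
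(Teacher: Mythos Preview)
Your proposal is correct and follows essentially the same approach as the paper: induction on $n$ with the inductive step reduced to one of the super Pascal identities of Lemma~\ref{lem:superPascal}. The only difference is cosmetic: the paper multiplies in the order $\Delta(E)\cdot\Delta(E^n)$ and invokes the \emph{first} Pascal identity, whereas you multiply as $\Delta(E^n)\cdot\Delta(E)$ and invoke the \emph{second}; both routes are equivalent.
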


\begin{proof}
We prove equation \eqref{eq:Delta(En)} by induction on $n$. The case $n=1$ holds by the definition of $\Delta$. Assuming equation \eqref{eq:Delta(En)} holds for $n$, we compute
\begin{eqnarray*}
\Delta (E^{n+1})
% &=&
% \Delta(E) \Delta(E^{n}) \\
&=&
(1 \otimes E + E \otimes K) 
\sum_{j=0}^n \pserbin{n}{j}(-\q)^{j(n-j)} E^j \otimes E^{n-j} K^{j}     \\
&=&
\sum_{j=0}^n \pserbin{n}{j}(-\q)^{j(n-j)}(-1)^j E^j \otimes E^{n+1-j} K^j \\ && +
\sum_{j=0}^n \pserbin{n}{j}(-\q)^{j(n-j)}\q^{2(n-j)} E^{j+1} \otimes E^{n-j} K^{j+1}\\
% &=&
% 1\otimes E^{n+1}+E^{n+1}\otimes K^{n+1}
% \\
% &&+
% \sum_{j=1}^n 
% \left(
% \pserbin{n}{j}(-\q)^{j(n-j)}(-1)^j 
% +
% \pserbin{n}{j-1}(-\q)^{(j+1)(n+1-j)} 
% \right)E^j \otimes E^{n+1-j} K^j 
% \\
&=&
1\otimes E^{n+1}+E^{n+1}\otimes K^{n+1}
\\
&&+
\sum_{j=1}^n 
(-\q)^{j(n+1-j)}\left(
\pserbin{n}{j}\q^{-j}
+
\pserbin{n}{j-1}(-\q)^{(n+1-j)}
\right)E^j \otimes E^{n+1-j} K^j 
\\
&=&
\sum_{j=0}^{n+1}\pserbin{n+1}{j}(-\q)^{j(n+1-j)}E^j\otimes E^{n+1-j}K^j,
\end{eqnarray*}
the final equality following from Lemma \ref{lem:superPascal}.
%The proof of equation \eqref{eq:Delta(Fn)} is analogous.
\end{proof}

\subsection{The unrolled quantum group of \texorpdfstring{$\osp$}{osp(1|2)}}
\label{sec:Uqosp}

Let $r \geq 3$ be an integer and $q = e^{\frac{2 \pi \I}{r}}$. We henceforth consider only super quantum integers specialized to $\q = q$, for which we use the same notation as Section \ref{sec:supInt}. Given $z \in \C$, set $q^z = e^{\frac{2 z \pi \I}{r}}$. If $z \neq 0$, define $\pser{z;n} \in \C$ as in Section \ref{sec:UqospGen}, but with $K$ replaced by $z$.

\begin{Def} \label{def:unrolled_quantum_group}
The \emph{unrolled quantum group $\UqUn$} is the unital superalgebra over $\C$ with generators $H$, $K$, $K^{-1}$ of parity $\p 0$ and $E$, $F$ of parity $\p 1$ and relations
\[
[H, K] = [H, K^{-1}] = 0,
\qquad
[H,E]=2E,
\qquad
[H,F]=-2F,
\]
\[
KK^{-1}=K^{-1}K=1,
\qquad
KE=q^2 EK,
\qquad KF=q^{-2} FK,
\qquad
[E,F]=\frac{K-K^{-1}}{q-q^{-1}}.
\]
\end{Def}

\begin{Rem}
The Lie superalgebra $\osp$ is generated by vectors $J, X^+, X^-$ of parity $\p 0$ and $\psi^+,\psi^-$ of parity $\p 1$ with defining relations
\[
[J,X^{\pm}]=\pm X^{\pm},
\qquad
[J,\psi^{\pm}]=\pm \frac{1}{2} \psi^{\pm},
\qquad
[X^+,X^-] = 2 J,
\]
\[
[X^{\pm},\psi^{\pm}]=0,
\qquad
[X^{\pm},\psi^{\mp}]=- \psi^{\pm},
\qquad
[\psi^{\pm},\psi^{\pm}]= \pm 2 X^{\pm},
\qquad
[\psi^+,\psi^-]= 2 J.
\]
In fact, the relations involving only $J$, $\psi^{\pm}$ together with the super Jacobi identify already determine $\osp$. Setting $H = 4 J$, $E = \psi^+$ and $F = 2\psi^-$, this smaller set of relations becomes
\[
[H,E] = 2 E, \qquad [H,F]=-2F,
\qquad
[E, F] = H.
\]
The superalgebra $\UqUn$ is the unrolled quantization of this presentation of $\osp$. This also explains Definition \ref{def:quantum_group}, which agrees with \cite[Example 2.1]{jeong2001} but differs from \cite[\S 2]{kulish1989b} and \cite[Definition 2.1]{clark2013}. Our normalizations are chosen to ease comparison with the representation theory of the unrolled quantum group of $\sltwo$.
\end{Rem}

Give $\UqUn$ a Hopf superalgebra structure by supplementing the specialized Hopf structure of $\UqG$, given in Section \ref{sec:UqospGen}, by the definitions
\[
\epsilon(H)=0,
\qquad
\Delta (H) = H \otimes 1 + 1 \otimes H,
\qquad
S(H)=-H.
\]
Lemmas \ref{lem:EFPow} and \ref{lem:EPow} continue to hold for $\UqUn$.

\begin{Lem}
\label{lem:minVanish}
If $r=4$, then $\pser{n} \neq 0$ for all $n \geq 1$. If $r \neq 4$, then the minimal positive integer $\red{r}$ which satisfies $\pser{\red{r}}=0$ is
\[
\red{r}=
\begin{cases}
2r & \mbox{if } r \equiv 1 \mod 2, \\
r & \mbox{if } r \equiv 2 \mod 4,\\
\frac{r}{2} & \mbox{if } r \equiv 0 \mod 8,\\
\frac{r}{4} & \mbox{if } r \equiv 4 \mod 8.
\end{cases}
\]
\end{Lem}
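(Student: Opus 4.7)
The plan is to reduce the vanishing question to a root-of-unity computation. Starting from the closed form $\pser{n} = (q^{-n} - (-q)^n)/(q+q^{-1})$, I first observe that the denominator $q + q^{-1} = 2\cos(2\pi/r)$ vanishes, for $r \geq 3$, precisely when $r = 4$. So the cases $r = 4$ and $r \neq 4$ must be handled separately.

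For the exceptional case $r = 4$, the rational formula is $0/0$, so I would fall back on the polynomial definition $\pser{n} = \sum_{i=0}^{n-1}(-1)^{n+1+i}q^{n-1-2i}$. At $q = e^{\pi\I/2}$ we have $q^2 = -1$, so $q^{-2i} = (-1)^i$ and each summand collapses to $(-1)^{n+1}q^{n-1}$, yielding $\pser{n}\big|_{r=4} = (-1)^{n+1} n\, q^{n-1}$, which is manifestly nonzero for every $n \geq 1$.

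For $r \neq 4$ the denominator is nonzero, so $\pser{n} = 0$ if and only if $q^{-n} = (-q)^n$, which rearranges to $(-q^2)^n = 1$. Hence $\red{r}$ equals the multiplicative order of $-q^2 \in \C^\times$. Writing $-q^2 = e^{2\pi\I(r+4)/(2r)}$, this order is $2r/\gcd(r+4, 2r)$, and since $\gcd(r+4, 2r) = \gcd\bigl(r+4,\, 2r - 2(r+4)\bigr) = \gcd(r+4, 8)$, the whole problem reduces to determining $\gcd(r+4, 8)$ from the residue of $r$ modulo $8$.

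The last step is the routine case analysis: for $r$ odd, $r+4$ is odd and $\gcd(r+4, 8) = 1$, giving $\red{r} = 2r$; for $r \equiv 2 \pmod 4$, $r+4 \equiv 2 \pmod 4$ and $\gcd(r+4, 8) = 2$, giving $\red{r} = r$; for $r \equiv 0 \pmod 8$, $r+4 \equiv 4 \pmod 8$ and $\gcd(r+4,8) = 4$, giving $\red{r} = r/2$; and for $r \equiv 4 \pmod 8$, $r+4 \equiv 0 \pmod 8$ and $\gcd(r+4, 8) = 8$, giving $\red{r} = r/4$. There is no real obstacle here, only bookkeeping; the conceptual content is entirely in the identification $\pser{n} = 0 \iff (-q^2)^n = 1$.
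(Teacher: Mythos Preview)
Your proof is correct and follows essentially the same approach as the paper: handle $r=4$ via the polynomial form, and for $r\neq 4$ reduce $\pser{n}=0$ to the root-of-unity condition $(-q^2)^n=1$ (equivalently the paper's $q^{2n}=(-1)^n$) followed by a case analysis modulo $8$. Your computation of the order of $-q^2$ via $2r/\gcd(r+4,8)$ is a slightly cleaner way to organize the case split than the paper's parity-of-$n$ argument, but the underlying idea is identical.
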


\begin{proof}
If $r=4$, then $\pser{n} = (-\I)^{n-1} n$. If $r \neq 4$, then the denominator of $\frac{q^{-n} - (-q)^n}{q+q^{-1}}$ is non-zero so that $\pser{n}=0$ if and only if the numerator vanishes. This is equivalent to setting $q^{2n}=(-1)^n$, which holds if and only if $n \in \frac{r}{4} (2\Z + p_n)$ where $p_n \in \{0,1\}$ is the parity of $n$. Note that if $n$ is odd, then $r \equiv 0 \mod 4$. We proceed in cases:
\begin{itemize}
\item If $r \equiv 1 \mod 2$, then $n$ is even, so that $n \in \frac{r}{2} \Z$. It follows that $n=2r$ is the minimal solution.

\item If $r \equiv 2 \mod 4$, then $n$ is even, so that $n \in \frac{r}{2} \Z$. It follows that $n=r$ is the minimal solution.

\item If $r \equiv 0 \mod 8$, then $r=8k$ for $k \in \Z$, so that $n \in 2k(2 \Z + p_n)$. The minimal solution is $n = 4k = \frac{r}{2}$.

\item If $r \equiv 4 \mod 8$, then $r=4k$ with $k \in \Z$ odd, so that $n \in k(2 \Z + p_n)$. The minimal solution is $n = k = \frac{r}{4}$. \qedhere
\end{itemize}
\end{proof}

The peculiar behaviour of $r=4$ in Lemma \ref{lem:minVanish} obstructs the definition of a restricted Hopf algebra quotient of $\UqUn$ with $q=\sqrt{-1}$; see the proof of Lemma \ref{lem:twoSidedHopf} below. For this reason, we henceforth exclude the case $r=4$ from consideration.

For later use, we record that
\begin{equation}
\label{eq:q2r}
q^{2\red{r}}
=
(-1)^{\red{r}}
=
\begin{cases}
-1 & \mbox{if } r \equiv 4 \mod 8, \\
1 & \mbox{otherwise}.
\end{cases}
\end{equation}

Let $\pideal$ and $\mideal$ be the left ideals of $\UqUn$ generated by $E^{\red{r}}$ and $F^{\red{r}}$, respectively, and let $\pmideal=\pideal+\mideal$.

\begin{Lem}
\label{lem:twoSidedHopf}
The left ideal $\pmideal$ is a two-sided Hopf ideal.
\end{Lem}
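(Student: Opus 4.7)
The plan is to verify that $\pmideal$ satisfies the four required properties of a Hopf ideal: closure under two-sided multiplication, compatibility with $\Delta$, with $\epsilon$, and with $S$. The essential input is that the minimality of $\red{r}$ from Lemma \ref{lem:minVanish} forces the vanishing of the super quantum binomials $\pserbin{\red{r}}{j}$ for $0 < j < \red{r}$.

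\textbf{Step 1: A key vanishing lemma.} First I would observe that the identity $\pserbin{\red{r}}{j} \cdot \pser{j}! \cdot \pser{\red{r}-j}! = \pser{\red{r}}!$, valid in $\mathcal{A}$ and hence after specialization, implies $\pserbin{\red{r}}{j}=0$ whenever $0 < j < \red{r}$, since the left-hand factor $\pser{\red{r}}$ vanishes while minimality of $\red{r}$ guarantees $\pser{j}!, \pser{\red{r}-j}! \neq 0$.

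\textbf{Step 2: Two-sided ideal.} Since $\pideal$ and $\mideal$ are defined as left ideals, I need to check that right multiplication by each generator sends $E^{\red{r}}$ and $F^{\red{r}}$ into $\pmideal$. The cases of $H, K^{\pm 1}, E$ for $E^{\red{r}}$ (and symmetrically for $F^{\red{r}}$) are immediate from the defining relations: e.g.\ $E^{\red{r}} H = (H - 2\red{r}) E^{\red{r}}$ and $E^{\red{r}} K = q^{-2\red{r}} K E^{\red{r}}$. The substantive case is the interaction of $E^{\red{r}}$ with $F$ (and dually), which follows directly from Lemma \ref{lem:EFPow}: specifically, $[E, F^{\red{r}}] = \pser{\red{r}} F^{\red{r}-1}\pser{K; -\red{r}+1} = 0$ and $[F, E^{\red{r}}]= (-1)^{\red{r}} \pser{\red{r}} E^{\red{r}-1}\pser{K;\red{r}-1}= 0$, so $E^{\red{r}} F = (-1)^{\red{r}} F E^{\red{r}}$ and $F^{\red{r}} E = (-1)^{\red{r}} E F^{\red{r}}$, both of which lie in $\pmideal$.

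\textbf{Step 3: Coideal property.} Using Lemma \ref{lem:EPow} and Step 1, the coproduct collapses to its extreme terms,
\[
\Delta(E^{\red{r}}) = 1 \otimes E^{\red{r}} + E^{\red{r}} \otimes K^{\red{r}},
\qquad
\Delta(F^{\red{r}}) = K^{-\red{r}} \otimes F^{\red{r}} + F^{\red{r}} \otimes 1,
\]
which visibly lies in $\pmideal \otimes \UqUn + \UqUn \otimes \pmideal$. The counit condition $\epsilon(E^{\red{r}}) = \epsilon(F^{\red{r}}) = 0$ is immediate from the algebra map property of $\epsilon$.

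\textbf{Step 4: Antipode.} By the super antihomomorphism property of $S$ and a straightforward induction using $S(E) = -EK^{-1}$ and the commutation relation $KE = q^2 EK$, I would compute $S(E^{\red{r}})$ as an explicit scalar multiple of $E^{\red{r}} K^{-\red{r}}$ (and similarly $S(F^{\red{r}})$ as a scalar times $K^{\red{r}} F^{\red{r}}$), which lies in $\pmideal$. The main technical step here is simply bookkeeping of the $q$-power and sign obtained from pulling all the $K^{-1}$'s past the $E$'s, together with the Koszul sign from $S$ being a super antihomomorphism on the odd generator.

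The only real subtlety is Step 1 (the vanishing of middle binomials), since all other steps are direct consequences of the relations and the commutator formulas already established. Everything else is organized computation.
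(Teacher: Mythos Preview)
Your proposal is correct and follows essentially the same approach as the paper: both arguments hinge on the vanishing of the middle super quantum binomials $\pserbin{\red{r}}{j}$ for $0<j<\red{r}$, use Lemma~\ref{lem:EFPow} to show $E^{\red{r}}$ and $F^{\red{r}}$ are central up to scalars (hence the ideals are two-sided), apply Lemma~\ref{lem:EPow} to collapse the coproduct, and compute the antipode as a scalar multiple of $E^{\red{r}}K^{-\red{r}}$. Your write-up is slightly more explicit than the paper's (you mention the counit and the commutation with $H$, which the paper leaves implicit), but the substance is identical.
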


\begin{proof}
Since $\pser{\red{r}}=0$, equation \eqref{eq:[F,E^n]} gives $[F,E^{\red{r}}]=0$. The defining relations of $\UqUn$ give $K E^{\red{r}} = q^{2 \red{r}} E^{\red{r}} K$ and we conclude that $E^{\red{r}}$ is central up to scalars. It follows that $\pideal$ is a two-sided ideal. Similarly, $\mideal$ is a two-sided ideal. Regarding the Hopf condition, note that the defining property of $\red{r}$ implies
\[
\pserbin{{\red{r}}}{j}=
\begin{cases}
1 &\mbox{if } j\in \{0,{\red{r}}\},\\
0& \mbox{if } 0<j< \red{r}.
\end{cases}
\]
In view of this, Lemma \ref{lem:EPow} gives
\[
\Delta(E^{\red{r}})=E^{\red{r}}\otimes K^{\red{r}}+1\otimes E^{\red{r}}\in \pideal\otimes \UqUn + \UqUn \otimes \pideal.
\]
Using that $S$ is a superalgebra anti-homomorphism, we compute
\[
S(E^{\red{r}})=(-1)^{\frac{\red{r}(\red{r}+1)}{2}} q^{-{\red{r}}({\red{r}}-1)}E^{\red{r}}K^{-\red{r}}\in\pideal.
\]
Similarly, $\mideal$ is Hopf. It follows that $\pmideal$ is a two-sided Hopf ideal.
% For $l \geq 1$, we have $S(E^l) = (-1)^{\lceil\frac{l}{2}\rceil} q^{-l(l-1)} E^l K^{-l}$. The base case $l=1$ holds by definition. By induction, we compute
% \begin{eqnarray*}
% S(E^{l+1})
% &=&
% (-1)^l S(E^l) S(E) \\
% &=&
% (-1)^{l+\lceil\frac{l}{2}\rceil+1} q^{-l(l-1)} E^l K^{-l} E K^{-1} \\
% &=&
% (-1)^{l+1+\lceil\frac{l}{2}\rceil} q^{-l(l-1)-2l} E^{l+1} K^{-(l+1)} \\
% &=&
% (-1)^{l+1+\lceil\frac{l}{2}\rceil} q^{-(l+1)l} E^{l+1} K^{-(l+1)} \\
% &=&
% (-1)^{\lceil\frac{l+1}{2}\rceil} q^{-(l+1)l} E^{l+1} K^{-(l+1)}.
% \end{eqnarray*}
\end{proof}

\begin{Def} \label{def:restricted_unrolled_quantum_group}
The \emph{restricted unrolled quantum group} is the Hopf superalgebra
\[
\Uq = \UqUn \slash \pmideal.
\]
\end{Def}

The following result asserts that Definition \ref{def:restricted_unrolled_quantum_group} is the only restricted quantum group associated to $\UqUn$, by which we mean that both $E$ and $F$ are nilpotent, no additional relations are imposed on Cartan generators, and the resulting superalgebra is Hopf.

\begin{Prop}
\label{prop:HopfNoGo}
    Let $J\subset \UqUn$ be an ideal which contains $E^{k_1}$ and $F^{k_2}$ for some $k_1,k_2> 0$ and does not contain $K^l$ for $l\neq 0$. Suppose that $n_1$ and $n_2$ are the least such $k_1$ and $k_2$ for which $E^{k_1},F^{k_2}\in J$. Then $J$ is a Hopf ideal if and only if $n_1,n_2\in \{1,\red{r}\}$.
\end{Prop}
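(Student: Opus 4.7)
The proof reduces to analyzing the coproduct condition; the antipode and counit conditions are either automatic or follow easily once the coproduct condition is satisfied. I focus on the forward implication and argue for $n_1$; the case of $n_2$ is entirely parallel, using the formula for $\Delta(F^n)$ instead. Assume $J$ is a Hopf ideal and set $\overline{\UqUn} = \UqUn / J$. By Lemma \ref{lem:EPow},
\[
\Delta(E^{n_1}) = \sum_{j=0}^{n_1} \pserbin{n_1}{j} (-q)^{j(n_1-j)} E^j \otimes E^{n_1-j} K^j,
\]
and the endpoints $j \in \{0, n_1\}$ lie automatically in $J \otimes \UqUn + \UqUn \otimes J$ since $E^{n_1} \in J$. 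Thus the Hopf condition forces the middle sum to vanish modulo $J \otimes \UqUn + \UqUn \otimes J$.

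The next step is to show the images $\bar{E}^j \otimes \bar{E}^{n_1-j} \bar{K}^j$ for $0 < j < n_1$ are linearly independent in $\overline{\UqUn} \otimes \overline{\UqUn}$. Because the adjoint action of the even element $H$ preserves $J$, it descends to the quotient, and these vectors are simultaneous eigenvectors of $[H \otimes 1, -]$ and $[1 \otimes H, -]$ with the distinct eigenvalue pairs $(2j, 2(n_1 - j))$ (using $[H,K]=0$). Each such vector is nonzero since $\bar{E}^j \neq 0$ by minimality of $n_1$ and since $\bar{K}^j$ is invertible by the hypothesis on $K^l$. We conclude that $\pserbin{n_1}{j} = 0$ at $q$ for every $0 < j < n_1$.

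Translating this vanishing into the claim $n_1 \in \{1, \red{r}\}$ is the heart of the argument. Extending the reasoning of Lemma \ref{lem:minVanish}, one has $\pser{n} = 0$ if and only if $\red{r} \mid n$. Since $\pserbin{n_1}{1} = \pser{n_1}$, we must have $n_1 = 1$ or $\red{r} \mid n_1$. The main obstacle is excluding $n_1 = m \red{r}$ for $m \geq 2$. For this I would exploit the factorization $\Delta(E^{m\red{r}}) = \Delta(E^{\red{r}})^m = (E^{\red{r}} \otimes K^{\red{r}} + 1 \otimes E^{\red{r}})^m$. The key observation is that the identity $q^{2\red{r}} = (-1)^{\red{r}}$ from equation \eqref{eq:q2r} exactly cancels the sign acquired from the super tensor product when commuting the two summands, so they commute in the ordinary sense in $\UqUn \otimes \UqUn$. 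The usual binomial theorem then yields
\[
\Delta(E^{m\red{r}}) = \sum_{k=0}^m \binom{m}{k} (-1)^{k(m-k)\red{r}} E^{k\red{r}} \otimes E^{(m-k)\red{r}} K^{k\red{r}},
\]
and comparing the $k = 1$ term with the Lemma \ref{lem:EPow} expansion forces $\pserbin{m\red{r}}{\red{r}}$ to equal a nonzero scalar multiple of $\binom{m}{1} = m$, which is nonzero, contradicting the vanishing established above.

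For the converse, with $n_1, n_2 \in \{1, \red{r}\}$, the two-sided ideal generated by $E^{n_1}$ and $F^{n_2}$ is Hopf by the template of Lemma \ref{lem:twoSidedHopf}: when $n_i = 1$, primitivity of the generator under $\Delta$ is transparent from the defining formulae, and when $n_i = \red{r}$, the computation in Lemma \ref{lem:twoSidedHopf} applies essentially verbatim, using the vanishing $\pserbin{\red{r}}{j} = 0$ for $0 < j < \red{r}$.
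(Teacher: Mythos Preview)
Your proof is correct and rests on the same key computation as the paper's---the ordinary binomial expansion of $\Delta(E^{a\red{r}})$---but is organized differently. The paper writes $n_1 = a\red{r} + b$ with $0 \le b < \red{r}$, expands $\Delta(E^{n_1}) = \Delta(E^{a\red{r}})\Delta(E^b)$, and directly exhibits a cross term whose coefficient $\binom{a}{i}\pserbin{b}{j}$ is nonzero and whose $E$-exponents on each tensor factor lie strictly between $0$ and $n_1$, so that it survives in the quotient. You instead first prove the stronger intermediate statement that \emph{every} middle super-quantum binomial $\pserbin{n_1}{j}$ must vanish at $q$, using the $H$-bigrading on $\overline{\UqUn}\otimes\overline{\UqUn}$ to separate the summands; this reduces via $j=1$ to $\red{r}\mid n_1$, and your contradiction at $j=\red{r}$ for $n_1 = m\red{r}$ is exactly the paper's $b=0$ case. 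Your route is slightly longer but has the virtue of making the linear-independence-in-the-quotient step explicit, whereas the paper leaves this implicit when asserting that its surviving term does not lie in $J\otimes\UqUn + \UqUn\otimes J$.
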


\begin{proof}
    One can check that an ideal containing some $E^{k_1}$ and $F^{k_2}$ does not necessarily contain a nonzero power of $K$. To prove the proposition, it suffices to show that $\Delta(E^{n_1})\in J\otimes \UqUn + \UqUn\otimes J$ if and only if $n_1\in \{1,\red{r}\}$. The argument involving the coproduct of $F^{n_2}$ is similar. The case $n_1=1$ is verified immediately. The case $n_1=\red{r}$ is proven in Lemma \ref{lem:twoSidedHopf}. 
    
    Suppose then that $n_1\not\in \{0,1,\red{r}\}$ and write $n_1=a \red{r}+b$ where $a\geq 0$ and $0\leq b <\red{r}$. A straightforward induction shows that 
    \[
    \Delta(E^{a\cdot\red{r}})=
    \sum_{i=0}^{a}\binom{a}{i} E^{i\red{r}}\otimes K^{i\red{r}}E^{(a-i)\red{r}}\,.
    \]
    Thus, by Lemma \ref{lem:EPow}, we have
    \begin{align*}
    \Delta(E^{n_1})&=\Delta(E^{a\cdot\red{r}})\Delta(E^b)\\
    &=
    \left(\sum_{i=0}^{a}\binom{a}{i} E^{i\red{r}}\otimes K^{i\red{r}}E^{(a-i)\red{r}}\right)\left(
\sum_{j=0}^b \pserbin{b}{j}(-\q)^{j(b-j)} E^j \otimes E^{b-j} K^j
\right)
\\&=
\sum_{i=0}^{a}\sum_{j=0}^b\binom{a}{i} \pserbin{b}{j}(-\q)^{j(b-j)} (-1)^{(a-i)j\red{r}}E^{i\red{r}+j}\otimes K^{i\red{r}}E^{(a-i)\red{r}+b-j}K^j\,.
    \end{align*}
    For any $0<i<a$ and $0<j<b$, the exponents $i\red{r}+j$ and $(a-i)\red{r}+b-j$ are less than $n_1$ and nonzero. Moreover, $\pserbin{b}{j}$ is non-zero. Since nonzero powers of $K$ do not belong to the ideal, the corresponding terms in the expansion of $\Delta(E^{n_1})$ do not belong to $J\otimes \UqUn + \UqUn\otimes J$. 
\end{proof}

\begin{Rem}
    Let $q$ be a primitive $r$\textsuperscript{th} root of unity and set
        \[
\tilde{r}=
\begin{cases}
2r & \mbox{if } r \equiv 1 \mod 2, \\
r & \mbox{if } r \equiv 0 \mod 4,\\
\frac{r}{2} & \mbox{if } r \equiv 2 \mod 4.
\end{cases}
\]
In \cite[D.7 Case 1]{blumen2006}, it is claimed that $E$ (denoted by $e_n$ in \emph{loc. cit.}) raised to the power $\tilde{r}$ (denoted by $\overline{N}$ in \emph{loc. cit.}) generates a Hopf ideal of $U_q(\osp)$. The discrepancy with Proposition \ref{prop:HopfNoGo} appears to be due to a misapplication of \cite[Lemma B.0.7]{blumen2006} which incorrectly assumes that $(E\otimes K)(1\otimes E)$ is equal to $-q(1\otimes E)(E\otimes K)$ in the computation of $\Delta (E)^n=(E\otimes K+1\otimes E)^n$. This leads to the incorrect conclusion that $\Delta (E)^{\tilde{r}}$ is equal to
    \[
    \sum_{i=0}^{\tilde{r}}\pserbin{\tilde{r}}{i}_q E^{\tilde{r}-i}\otimes E^iK^{\tilde{r}-i}=
    E^{\tilde{r}}\otimes K^{\tilde{r}}+1\otimes E^{\tilde{r}}.
    \]
    If one instead uses that $(E\otimes K)(1\otimes E)=-q^2(1\otimes E)(E\otimes K)$ and computes $\tilde{s}$ for $q^2$, a primitive root of unity of order $s= \frac{r}{\gcd(2,r)}$, then one recovers $\tilde{s}=\red{r}$ as in Lemma \ref{lem:minVanish}.
\end{Rem}

\subsection{Weight modules}
\label{sec:weightMod}

\begin{Def}
A finite dimensional $\Uq$-module $V$ is a \emph{weight module} if $H$ acts semisimply on $V$ and $K v=q^{\lambda}v$ for any $H$-weight vector $v$ of weight $\lambda \in \C$.
\end{Def}

All $\Uq$-modules in this paper are assumed to be weight. The category $\cat$ of weight $\Uq$-modules and their $\Uq$-linear maps (of parity $\p 0$) is $\C$-linear, abelian and locally finite. The coproduct $\Delta$ induces a monoidal structure on $\cat$ with monoidal unit the trivial module $\unit=\C$.

Given $V \in \cat$, let $V^{\vee} \in \cat$ be the $\C$-linear dual of the underlying super vector space of $V$ with $\Uq$-module structure given by
\[
(x \cdot f)(v) = (-1)^{\p f \p x} f(S(x)v),
\qquad
v \in V, \; f \in V^{\vee}, \; x \in \Uq.
\]
Let $\{v_i\}_i$ be a homogeneous basis of $V$ with dual basis $\{v_i^{\vee}\}_i$. For each integer $s \in \Z$, define putative left and right duality structures on $\cat$ by the $\C$-linear maps
\[
\tev_V(f \otimes v) = f(v),
\qquad
\tcoev_V (1) = \sum_i v_i \otimes v_i^{\vee},
\]
\[
\ev^{(s)}_V (v \otimes f) = (-1)^{\p f \p v}f(K^{1-s} v),
\qquad
\coev^{(s)}_V (1) = \sum_i (-1)^{\p v_i}v_i^{\vee} \otimes K^{s-1} v_i.
\]

\begin{Lem}\label{lem:catqPivot}
If $q^{2s}=1$, then the above maps define a pivotal structure on $\cat$.
\end{Lem}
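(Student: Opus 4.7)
The plan is to identify $g := K^{1-s}$ as a pivotal element of $\Uq$, i.e., a group-like element satisfying $S^2(x) = g x g^{-1}$ for all $x \in \Uq$, and then invoke the standard correspondence between such elements and pivotal structures on the category of finite dimensional modules over a Hopf superalgebra. That $K^{1-s}$ is group-like is immediate from $K$ being group-like. For the second condition, a direct computation on generators gives
\begin{equation*}
S^2(E)=q^2 E, \qquad S^2(F)=q^{-2}F, \qquad S^2(K)=K, \qquad S^2(H)=H,
\end{equation*}
while $K^{1-s} E K^{s-1} = q^{2(1-s)} E$ with $K^{1-s}$ commuting with $K$ and $H$. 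Matching these expressions yields $S^2 = \operatorname{Ad}(K^{1-s})$ on generators precisely when $q^{-2s}=1$, which is our hypothesis.

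First I would verify $\Uq$-linearity of each of the four maps. For $\tev_V$ and $\tcoev_V$ this is the standard Hopf-algebraic verification using the antipode axiom $\sum S(x_{(1)}) x_{(2)} = \epsilon(x)$. For $\ev^{(s)}_V$ and $\coev^{(s)}_V$, $\Uq$-linearity reduces to the identity $\sum S(x_{(2)}) K^{1-s} x_{(1)} = \epsilon(x) K^{1-s}$, which follows by sliding $K^{1-s}$ past $x_{(1)}$ via $K^{1-s} x_{(1)} = S^2(x_{(1)}) K^{1-s}$ and then applying $\sum S(x_{(2)}) S^2(x_{(1)}) = S\!\left(\sum S(x_{(1)}) x_{(2)}\right) = \epsilon(x)$.

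Next I would check the four zig-zag identities. Those for $(\tev_V, \tcoev_V)$ are immediate from the duality of the bases $\{v_i\}$ and $\{v_i^{\vee}\}$ and the standard coevaluation expansion. Those for $(\ev^{(s)}_V, \coev^{(s)}_V)$ reduce to the same identities after cancelling $K^{1-s} \cdot K^{s-1} = 1$, using that $K^{1-s}$ is even so no Koszul sign is introduced. Finally, to obtain a genuine pivotal structure one must check that the induced natural isomorphism $\pi_V \colon V \to V^{\vee\vee}$, defined by $v \mapsto \bigl(f \mapsto (-1)^{\p f \p v} f(K^{1-s} v)\bigr)$, is monoidal; this is immediate from $\Delta(K^{1-s}) = K^{1-s} \otimes K^{1-s}$, together with the super flip in the canonical identification $(V \otimes W)^{\vee\vee} \simeq V^{\vee\vee} \otimes W^{\vee\vee}$.

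The principal bookkeeping obstacle is tracking Koszul signs consistently in the super setting. Since $K^{1-s}$ is even it contributes no extra sign, and the signs arising from the super tensor product action and from dual pairings of homogeneous vectors match those already incorporated in the formulas for $\ev^{(s)}_V$ and $\coev^{(s)}_V$; once this is confirmed, all remaining verifications are formal consequences of $\Uq$ being a Hopf superalgebra with pivotal element $K^{1-s}$.
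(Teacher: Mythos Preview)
Your proof is correct and takes a cleaner, more conceptual route than the paper. The paper verifies $\Uq$-linearity of $\ev^{(s)}_V$ and $\coev^{(s)}_V$ by a direct generator-by-generator computation, exhibiting for instance that $\ev^{(s)}_V(E\cdot(v\otimes f)) = (1-q^{2s})(-1)^{(\p v + \p 1)\p f} f(K^{-s}Ev)$ and observing that this vanishes precisely when $q^{2s}=1$; the remaining checks are declared ``similar calculations.'' You instead package the same computation as the verification that $K^{1-s}$ is a pivotal element, i.e.\ that $S^2 = \operatorname{Ad}(K^{1-s})$, and then invoke the standard correspondence between pivotal elements and pivotal structures. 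This has the advantage of making transparent \emph{why} the condition $q^{2s}=1$ arises (it is exactly what is needed to match $S^2(E)=q^2E$ with $K^{1-s}EK^{s-1}=q^{2(1-s)}E$), and of handling the monoidality of the pivotal isomorphism for free via $\Delta(K^{1-s})=K^{1-s}\otimes K^{1-s}$, which the paper leaves implicit. The two arguments are equivalent in content; yours is simply a more structured presentation of the same computation.
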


\begin{proof}
That $\tev_V$ and $\tcoev_V$ define a left duality is clear. The $\Uq$-linearity of $\ev^{(s)}_V$ and $\coev^{(s)}_V$ is verified directly. For example, we find
\[
\ev^{(s)}_V(E \cdot( v \otimes f))
=
(1-q^{2s})(-1)^{(\p v + \p 1) \p f} f(K^{-s}Ev)
\]
which vanishes, as required, if $q^{2s}=1$. The remaining statements are verified by similar calculations.
\end{proof}

The pivotal isomorphism $p^{(s)}: \id_{\cat} \Rightarrow (-)^{\vee \vee}$ corresponding to Lemma \ref{lem:catqPivot} has components $p^{(s)}_V = K^{1-s} \textnormal{ev}_V$, where $\textnormal{ev}_V: V \rightarrow V^{\vee \vee}$ is the canonical evaluation isomorphism of underlying super vector spaces. It is a consequence of Lemma \ref{lem:catqPivot} that projective and injective objects of $\cat$ coincide; see \cite[Proposition 6.1.3]{etingof2015}.

\subsubsection{Simple modules}
\label{sec:simpleMod}

Let $\Uqb$ be the subalgebra of $\Uq$ generated by $H, K^{\pm 1}$ and $E$. For each $\lambda \in \C$ and $\p p \in \Ztwo$, let $\mathbb{C}_{(\lambda,\p p)}$ be the one-dimensional super vector space with parity $\p p$ and $\Uqb$-module structure
\[
H \cdot 1 = \lambda,
\qquad
K \cdot 1 = q^{\lambda},
\qquad
E \cdot 1 = 0.
\]

\begin{Def}
The \emph{Verma module of highest weight $(\lambda+\red{r}-1, \p p)$} is
\[
V_{(\lambda,\p p)} = \Uq \otimes_{\Uqb} \C_{(\lambda+\red{r}-1,\p p)}.
\]
\end{Def}

The module $V_{(\lambda,\p p)}$ has a basis $\{v_n=F^n \cdot 1 \otimes 1 \mid n=0, \dots, \red{r}-1\}$ with $v_n$ of weight\footnote{For convenience, we often view the parity of a homogeneous weight vector as part of its weight.} $(\lambda+\red{r}-1-2n,\p p + \p n) \in \C \times \Ztwo$. Comparison of weights and the observation that the dual of weight module is a weight module yields an isomorphism
\begin{equation}
\label{eq:dualVerma}
V_{(\lambda,\p p)}^{\vee} \simeq V_{(-\lambda,\p p + \red{\p r} + \p 1)}.
\end{equation}
With respect to the pivotal structure of Lemma \ref{lem:catqPivot}, the quantum dimension is
\[
\textnormal{qdim}^{(s)}\, V_{(\lambda, \p p)}
=
\sum_{i=0}^{\red{r}-1} (-1)^{\p p + i} q^{(1-s)(\lambda+\red{r}-1-2i)} 
=
(-1)^{\p p} q^{(1-s)(\lambda+\red{r}-1)} \dfrac{1-(-1)^{\red{r}}q^{-2\red{r}}}{1+q^{-2}}
=0,
\]
the final equality following from equation \eqref{eq:q2r}.

For later use, define the \emph{Verma module of lowest weight $(\lambda-\red{r}+1, \p p)$} by
\[
V^{-}_{(\lambda, \p p)} = \Uq \otimes_{\Uqbn} \C^{-}_{(\lambda-\red{r}+1,\p p)},
\]
where $\Uqbn$ is the subalgebra generated by $H$, $K^{\pm 1}$ and $F$ and $\C^{-}_{(\lambda,\p p)}$ is the $\Uqbn$-module defined as above, but with the relation $F \cdot 1 = 0$ replacing $E \cdot 1 = 0$. The module $V_{(\lambda, \p p)}$ is a lowest weight Verma module if and only if it is simple (see Lemma \ref{lem:atypWeights} below), in which case a comparison of weights gives $V_{(\lambda, \p p)} \simeq V^{-}_{(\lambda, \p p+\overline{r}_0 + \p{1})}$.

\begin{Rem}
\label{rem:middleWeight}
We emphasize that $\lambda$ is the average weight of $V_{(\lambda,\p p)}$ and is the weight of a vector in $V_{(\lambda,\p p)}$ if and only if $\red{r} \equiv 1 \mod 2$ if and only if $r \equiv 4 \mod 8$.
\end{Rem}

Recall that $p_n \in \{0,1\}$ denotes the parity of $n \in \Z$.

\begin{Lem}
\label{lem:atypWeights}
The module $V_{(\lambda,\p p)}$ is simple if and only if
\[
\lambda \in \C \setminus \{\frac{r}{4} (2m+p_n+1) -\red{r} + n \mid m \in \Z, \; 1 \leq n \leq \red{r}-1\}.
\]
\end{Lem}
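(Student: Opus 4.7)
The plan is to reduce simplicity of $V_{(\lambda, \p p)}$ to a nonvanishing condition for the action of $E$ on the basis $\{v_n = F^n \cdot 1 \otimes 1\}_{n=0}^{\red{r}-1}$. Since these vectors have pairwise distinct $H$-eigenvalues $\lambda + \red{r} - 1 - 2n$, any nonzero proper submodule $N \subset V_{(\lambda, \p p)}$ is spanned by a subset of the $v_n$ and must contain a highest weight vector of $N$, which must be a nonzero scalar multiple of some $v_n$ with $1 \leq n \leq \red{r} - 1$. Conversely, if $E v_n = 0$ for some $n$ in this range, then the subspace spanned by $v_n, \ldots, v_{\red{r}-1}$ is readily checked to be a proper nonzero $\Uq$-submodule, using that $F^{\red{r}} = 0$ in $\Uq$ so the $F$-action does not escape the subspace at the top. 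Hence $V_{(\lambda, \p p)}$ is simple if and only if $E v_n \neq 0$ for all $1 \leq n \leq \red{r} - 1$.

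To make this concrete, I would apply Lemma \ref{lem:EFPow}, together with $E \cdot (1 \otimes 1) = 0$ and $K \cdot (1 \otimes 1) = q^{\lambda + \red{r} - 1} (1 \otimes 1)$, to obtain
\[
E v_n = [E, F^n] \cdot (1 \otimes 1) = \pser{n}\, \pser{q^{\lambda + \red{r} - 1};\, -n+1}\, v_{n-1}.
\]
By Lemma \ref{lem:minVanish}, $\pser{n} \neq 0$ for $1 \leq n \leq \red{r} - 1$, so the vanishing of $E v_n$ is equivalent to $\pser{q^{\lambda + \red{r} - 1};\, -n+1} = 0$. Unwinding the definition of $\pser{z; m}$ converts this into the purely numerical condition $q^{2(\lambda + \red{r} - n)} = (-1)^{n - 1}$.

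The main step which is not purely formal is solving this last equation modulo the order of $q$. Using $(-1)^{n-1} = (-1)^{p_n + 1}$ and $q = e^{\frac{2\pi \I}{r}}$, the condition becomes $\lambda + \red{r} - n = \frac{r}{4}(2m + p_n + 1)$ for some $m \in \Z$, equivalently $\lambda = \frac{r}{4}(2m + p_n + 1) - \red{r} + n$. Taking the union over $1 \leq n \leq \red{r} - 1$ yields exactly the excluded set in the statement. The parity bookkeeping identity $(-1)^{n-1} = (-1)^{p_n + 1}$ is the only subtle point; once it is handled, the remaining reasoning is routine.
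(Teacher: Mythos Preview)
Your proof is correct and follows essentially the same approach as the paper: both compute $E v_n = \pser{n}\,\pser{q^{\lambda+\red{r}-1};-n+1}\,v_{n-1}$ via Lemma~\ref{lem:EFPow}, note that $\pser{n}\neq 0$ in the relevant range, and then solve the resulting vanishing condition. You simply make explicit the passage from ``no singular vectors'' to ``simple'' and carry out the final numerical solution that the paper leaves to the reader.
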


\begin{proof}
We search for singular vectors of $V_{(\lambda,\p p)}$. Let $1 \leq n \leq \red{r}-1$. Lemma \ref{lem:EFPow} gives
\[
E v_n
=
\pser{n} \pser{q^{\lambda + \red{r}-1}; -n+1} v_{n-1}.
\]
The restriction $1 \leq n \leq \red{r}-1$ ensures that $\pser{n} \neq 0$ so that $Ev_n=0$ if and only if $\pser{q^{\lambda + \red{r}-1}; -n+1} =0$. The latter equation has no solutions if and only if $\lambda$ is as in the statement of the lemma.
\end{proof}

Call $\lambda \in \C$ \emph{typical} if $V_{(\lambda,\p p)}$ is simple and \emph{atypical} otherwise. If $\lambda$ is atypical, then it can be written uniquely in the form
\begin{equation*}
\label{eq:atypWeight}
\lambda =\frac{r}{4} (2m+p_n+1) -\red{r} + n,
\qquad
m \in \Z, \; \; 1 \leq n \leq \red{r}-1.
\end{equation*}
For atypical $\lambda$, the unique non-trivial quotient $S_{(\lambda,\p p)}$ of $V_{(\lambda,\p p)}$ is simple of highest weight $\lambda + \red{r}-1$ and dimension $n$. The non-split short exact sequence
\begin{equation*}
\label{eq:filtVerm}
0 \rightarrow S_{(\lambda - 2n,\p p + \p n)} \rightarrow V_{(\lambda,\p p)} \rightarrow S_{(\lambda,\p p)} \rightarrow 0
\end{equation*}
implies that $S_{(\lambda,\p p)}$ is neither projective nor injective.

\begin{Thm}
\label{thm:simpleObjs}
A simple object of $\cat$ is isomorphic to $V_{(\lambda,\p p)}$ for $\lambda$ typical and $\p p \in \Ztwo$ or $S_{(\lambda,\p p)}$ for $\lambda$ atypical and $\p p \in \Ztwo$. 
\end{Thm}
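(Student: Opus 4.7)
The plan is the standard highest-weight classification argument, suitably adapted to the $\Ztwo$-graded setting.

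Let $V \in \cat$ be simple. First I would establish the existence of a nonzero highest weight vector $v \in V$ of homogeneous parity $\p p$ annihilated by $E$. Since $V$ is finite dimensional and $H$ acts semisimply, the set of $H$-weights of $V$ is finite; the relation $[H,E]=2E$ shows that $E$ strictly increases $H$-weights, so any nonzero vector of maximal weight $\mu$ is killed by $E$. Because $H$ and $K$ act by parity-preserving operators, each weight space is $\Ztwo$-graded, and we may take $v$ to be homogeneous of some parity $\p p$.

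Next, the universal property of the induced module
\[
V_{(\mu - \red{r}+1, \p p)} = \Uq \otimes_{\Uqb} \C_{(\mu, \p p)}
\]
yields a $\Uq$-linear map $\varphi : V_{(\mu - \red{r}+1, \p p)} \to V$ sending $1 \otimes 1$ to $v$. This map is well-defined on the $\Uq$-module (not merely the $\UqUn$-module) because the relation $F^{\red{r}}=0$ holds in $\Uq$ by Definition \ref{def:restricted_unrolled_quantum_group} and $E^{\red{r}}\cdot(1\otimes 1)=0$ holds trivially, as $E$ annihilates the highest weight vector. Simplicity of $V$ forces $\varphi$ to be surjective, exhibiting $V$ as a simple quotient of $V_{(\mu - \red{r}+1, \p p)}$.

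Finally, setting $\lambda := \mu - \red{r}+1$, the classification is completed by invoking Lemma \ref{lem:atypWeights} and the discussion following it. If $\lambda$ is typical then $V_{(\lambda,\p p)}$ is already simple, so $\varphi$ is an isomorphism and $V \simeq V_{(\lambda,\p p)}$. If $\lambda$ is atypical then $V_{(\lambda,\p p)}$ admits $S_{(\lambda,\p p)}$ as its unique simple quotient, whence $V \simeq S_{(\lambda,\p p)}$. The substantive structural input — identifying which Verma modules are simple and describing their maximal proper submodules — has already been carried out in Lemma \ref{lem:atypWeights} and the paragraphs after it, so the only point requiring care in the proof of the theorem itself is the \emph{homogeneity of parity} of the highest weight vector, which is guaranteed by the $\Ztwo$-grading of weight spaces noted in the first paragraph above.
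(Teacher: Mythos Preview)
Your proposal is correct and follows essentially the same approach as the paper: the paper's proof is the terse two-line statement ``This follows from the preceding discussion and the observation that any weight module has a homogeneous highest weight vector,'' and you have simply unpacked that observation into the standard highest-weight argument (existence of a parity-homogeneous maximal-weight vector, universal map from the Verma module, then Lemma~\ref{lem:atypWeights}). Your added care about the homogeneity of the highest weight vector under the $\Ztwo$-grading is exactly the point the paper flags as the one nontrivial ingredient.
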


\begin{proof}
This follows from the preceding discussion and the observation that any weight module has a homogeneous highest weight vector.
\end{proof}

\begin{Prop}
\label{prop:vermaProj}
If $\lambda \in \C$ is typical, then $V_{(\lambda, \p p)}$ is projective and injective. 
\end{Prop}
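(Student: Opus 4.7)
The plan is to show projectivity of $V_{(\lambda,\p p)}$; injectivity then follows automatically, since the pivotal structure from Lemma \ref{lem:catqPivot} makes projective and injective objects of $\cat$ coincide. The key observation is that, for typical $\lambda$, the isomorphism $V_{(\lambda,\p p)} \simeq V^-_{(\lambda, \p p + \red{\p r} + \p 1)}$ recorded in Section \ref{sec:simpleMod} means that $V_{(\lambda,\p p)}$ is simultaneously a highest weight and a lowest weight Verma module, and this combined with the nilpotency $F^{\red{r}} = 0$ built into $\Uq$ by construction produces an essentially automatic splitting.

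Concretely, given a surjection $\pi : B \twoheadrightarrow V_{(\lambda,\p p)}$ in $\cat$, I would first pick any preimage $b_0 \in B_{(\lambda+\red{r}-1,\p p)}$ of the canonical highest weight generator $v_0 \in V_{(\lambda,\p p)}$; such a preimage exists because weight spaces of weight modules surject under $\Uq$-linear surjections. Setting $b_{\red{r}-1} := F^{\red{r}-1} b_0 \in B$, one sees that $b_{\red{r}-1}$ has weight $(\lambda-\red{r}+1, \p p + \red{\p r} + \p 1)$, satisfies $\pi(b_{\red{r}-1}) = F^{\red{r}-1} v_0 = v_{\red{r}-1} \neq 0$ and is therefore nonzero, and crucially satisfies $F \cdot b_{\red{r}-1} = F^{\red{r}} \cdot b_0 = 0$ because $F^{\red{r}}$ vanishes in $\Uq$ by definition.

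Thus $b_{\red{r}-1}$ is a lowest weight vector in $B$ whose weight and parity match those of the canonical generator of $V^-_{(\lambda, \p p + \red{\p r} + \p 1)}$. By the universal property of the lowest weight Verma module, there is a $\Uq$-linear map $\phi : V^-_{(\lambda, \p p + \red{\p r}+ \p 1)} \to B$ sending that generator to $b_{\red{r}-1}$, and composing with the isomorphism $V_{(\lambda,\p p)} \simeq V^-_{(\lambda, \p p + \red{\p r}+ \p 1)}$ yields $\tilde \phi : V_{(\lambda,\p p)} \to B$. The composition $\pi \circ \tilde \phi$ is a nonzero endomorphism of the simple module $V_{(\lambda,\p p)}$, hence a nonzero scalar by Schur's lemma, so $\tilde \phi$ rescaled by the inverse of that scalar provides the desired splitting of $\pi$. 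The only point requiring attention is the bookkeeping of weight and parity shifts in identifying $V^-_{(\lambda, \p p + \red{\p r}+ \p 1)}$ with $V_{(\lambda,\p p)}$, which is routine given that the typicality hypothesis has already been used to produce the isomorphism $V_{(\lambda,\p p)} \simeq V^-_{(\lambda, \p p + \red{\p r}+ \p 1)}$.
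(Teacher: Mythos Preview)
Your proof is correct and takes a genuinely different route from the paper's. The paper works directly with the general lifting problem: given an epimorphism $f: W \to U$ and a morphism $g: V_{(\lambda,\p p)} \to U$, it chooses a preimage $w_0 \in W$ of $g(v_0)$ and then explicitly corrects it to a highest weight vector via $w_0' = \sum_{i=0}^{\red{r}-1} c_i F^i E^i w_0$, solving a recursive linear system for the $c_i$ whose coefficients are nonzero precisely because $\lambda$ is typical. Your approach instead exploits the relation $F^{\red{r}} = 0$ in $\Uq$ to produce automatically a \emph{lowest} weight vector $F^{\red{r}-1} b_0$ in $B$, then invokes the universal property of the lowest weight Verma module together with the identification of $V_{(\lambda,\p p)}$ with a lowest weight Verma module (which is where typicality enters in your version). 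Your argument is more conceptual and avoids the explicit recursion; the paper's is more computational but self-contained, not relying on the highest/lowest Verma identification, and treats the general lifting problem directly rather than only splitting surjections onto $V_{(\lambda,\p p)}$.
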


\begin{proof}
Let $f: W \twoheadrightarrow U$ be an epimorphism in $\cat$. A non-zero morphism $g: V_{(\lambda,\p p)} \rightarrow U$ is determined by a highest weight vector $g(v_0)=u_0 \in U$ of weight $(\lambda+\red{r}-1,\p p)$. Let $w_0 \in W$ be a preimage of $u_0$ of the same weight which satisfies $Ew_0 \in \ker f$. For all constants $c_0=1,c_1, \dots, c_{\red{r}-1} \in \mathbb{C}$, the weight vector
\[
w_0^{\prime} = \sum_{i=0}^{\red{r}-1} c_i F^i E^i w_0
\]
satisfies $f(w_0^{\prime})=u_0$. Using Lemma \ref{lem:EFPow}, we find that $E w_0^{\prime}=0$ if and only if
%
% Setting $c_0=1$, we use Lemma \ref{lem:EFPow} to compute
% \[
% E w_0^{\prime}
% =
% \sum_{i=0}^{\red{r}-1} \left( (-1)^i c_i + \pser{i+1} \pser{q^{\lambda+\red{r}-1+2i+2}; -i} c_{i+1} \right) F^i E^{i+1} w_0.
% \]
% %
% \begin{eqnarray*}
% E w_0^{\prime}
% &=&
% Ew_0 + \sum_{i=1}^{\red{r}-1} c_i EF^i E^i w_0 \\
% &=&
% Ew_0 + \sum_{i=1}^{\red{r}-1} c_i \left( (-1)^i F^i E^{i+1} w_0 +\pser{i} F^{i-1} \pser{K;-i+1} E^i w_0\right) \\
% &=&
% Ew_0 + \sum_{i=1}^{\red{r}-1} c_i \left( (-1)^i F^i E^{i+1} w_0 +\pser{i} F^{i-1} \pser{q^{\lambda+\red{r}-1+2i};-i+1} E^i w_0\right) \\
% &=&
% \sum_{i=0}^{\red{r}-1} \left( (-1)^i c_i + \pser{i+1} \pser{q^{\lambda+\red{r}-1+2i+2}; -i} c_{i+1} \right) F^i E^{i+1} w_0.
% \end{eqnarray*}
% %
% It follows that $Ew_0^{\prime}=0$ if and only if the recursive system of equations
%
\[
\pser{i+1} \pser{q^{\lambda+\red{r}-1+2i+2}; -i} c_{i+1}
=
(-1)^{i+1} c_i
\qquad
i=0, \dots, \red{r}-2.
\]
Since $\lambda$ is typical, the coefficient of $c_{i+1}$ is non-zero and the system has a unique solution. This gives the desired lift of $g$ as $V_{(\lambda,\p p)} \rightarrow W$, $v_0 \mapsto w_0^{\prime}$.
\end{proof}

Let $\p p_1$, $\p p_2 \in \Ztwo$ and $\lambda_1, \lambda_2 \in \C$ such that $\lambda_1$, $\lambda_2$ and $\lambda_1 + \lambda_2$ are typical. Proposition \ref{prop:vermaProj} and consideration of characters yields
\begin{equation}
\label{eqn:tensorSimpGen}
V_{(\lambda_1, \p p_1)} \otimes V_{(\lambda_2, \p p_2)}
\simeq
\bigoplus_{k \in H_r}
V_{(\lambda_1+\lambda_2 + \red{r}-1, \p p_1 + \p p_2) +k},
\end{equation}
where $H_r=\{(-2 i,\p i) \mid 0 \leq i \leq \red{r}-1\}$. See \cite[Theorem 5.2]{costantino2015} for a similar computation.

\subsection{Braiding}
\label{sec:braiding}

Motivated by the universal $R$-matrix of the $\hbar$-adic quantum group of $\osp$ \cite{kulish1989,saleur1990}, in this section we construct a braiding on the category $\cat$.

Let $V, W \in \cat$. Define $\Upsilon_{V,W} \in \End_{\C}(V \otimes W)$ by
\[
\Upsilon_{V,W} (v \otimes w) = q^{\frac{1}{2}\lambda_v \lambda_w} v \otimes w,
\]
where $v\in V$ and $w \in W$ are of weight $\lambda_v$ and $\lambda_w$, respectively. Let $\tilde{R}_{V,W} \in \End_{\C}(V \otimes W)$ be the action (incorporating Koszul signs) of
\begin{equation}
\label{eq:RMatrix}
\tilde{R}
=
\sum_{l=0}^{\red{r}-1} (-1)^l \frac{q^{\frac{l(l-1)}{2}} (q-q^{-1})^{l}}{\pser{l}!}E^l \otimes F^l \in \Uq^{\otimes 2}.
\end{equation}
Finally, let $c_{V,W} =\tau_{V,W} \circ \Upsilon_{V,W} \circ \tilde{R}_{V,W}$, where $\tau$ is the Koszul symmetry on the monoidal category of super vector spaces.
Explicitly, for $v\in V$ and $w\in W$ we have
\begin{equation}\label{eqn:tau}
    \tau_{V,W}(v\otimes w)=(-1)^{\p v \p w} w\otimes v.
\end{equation}

\begin{Prop}
\label{prop:braiding}
The maps $\{c_{V,W} : V \otimes W \rightarrow W \otimes V\}_{V,W \in \cat}$ define a braiding on $\cat$.
\end{Prop}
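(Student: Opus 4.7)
The plan is to verify the two defining properties of a braiding: (i) each $c_{V,W}$ is a $\Uq$-linear isomorphism, natural in $V$ and $W$, and (ii) the family satisfies both hexagon identities. Both properties are encoded in quasi-triangularity relations for the formal $R$-matrix $R = \Upsilon \cdot \tilde R$, viewed as an operator on pairs of weight modules: the Cartan factor $\Upsilon$ corresponds to the symbol $q^{H \otimes H/2}$, which does not itself lie in $\Uq^{\otimes 2}$ but whose action and conjugation action are well-defined on weight modules.

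For (i), naturality in $V$ and $W$ is automatic from the construction, so the content is $\Uq$-linearity of $c_{V,W}$, which is equivalent to the identity $R \cdot \Delta(x) = \Delta^{\mathrm{op}}(x) \cdot R$ for $x \in \{H, K, E, F\}$, where $\Delta^{\mathrm{op}}$ incorporates the Koszul sign from $\tau$. The Cartan cases are immediate since both $\Upsilon$ and $\tilde R$ preserve bi-weights. For $E$, I would first conjugate $\Delta(E) = 1 \otimes E + E \otimes K$ by $\Upsilon$ using $[H,E]=2E$; on a bi-weight $(\lambda,\mu)$ vector this replaces $1 \otimes E$ by $K \otimes E$ and $E \otimes K$ by $E \otimes K^2$. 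The claim then reduces to
$$\tilde R \cdot (K \otimes E + E \otimes K^2) = (E \otimes 1 + K \otimes E) \cdot \tilde R$$
in $\Uq^{\otimes 2}$, which is verified by inserting the explicit form \eqref{eq:RMatrix} of $\tilde R$ and using Lemma \ref{lem:EFPow} to telescope the commutators $[E,F^l]$ term by term. The case of $F$ is strictly analogous.

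For (ii), I would check $(\Delta \otimes \id)(R) = R_{13} R_{23}$; the other hexagon is symmetric. The Cartan part factorizes cleanly from $\Delta(H) = H \otimes 1 + 1 \otimes H$, producing $\Upsilon_{13}\Upsilon_{23}$ up to intermediate Cartan factors that must be commuted past $\tilde R_{13}$. The remaining task is to identify $(\Delta \otimes \id)(\tilde R)$, after absorbing these Cartan factors, with $\tilde R_{13} \tilde R_{23}$. Expanding $(\Delta \otimes \id)(\tilde R) = \sum_l \cdots \Delta(E^l) \otimes F^l$ via Lemma \ref{lem:EPow} produces precisely the super $q$-binomial coefficients $\pserbin{l}{j}(-q)^{j(l-j)}$ appearing on the right-hand side, so the match reduces to a Vandermonde-style identity for super quantum binomials and iterated application of Lemma \ref{lem:superPascal}.

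The main obstacle will be sign bookkeeping in step (ii). Three independent sources of signs must recombine correctly: the $(-1)^l q^{l(l-1)/2}$ prefactors of \eqref{eq:RMatrix}, the Koszul signs produced when commuting the odd factors $E^j$ past the Cartan $K^j$ arising from the shifted coproduct, and the $(-q)^{j(l-j)}$ factor from Lemma \ref{lem:EPow}, while the super quantum factorial denominators must match those produced by $\tilde R_{13}\tilde R_{23}$. The normalizations of super quantum integers fixed in Section \ref{sec:supInt} are tuned precisely so that this match occurs; the analogous computation for $\Uqsltwo$ in \cite{costantino2015} serves as a reliable guide, with the added super signs handled by Lemma \ref{lem:superPascal} in place of the ordinary $q$-Pascal identity.
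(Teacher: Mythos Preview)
Your proposal is correct and follows essentially the same route as the paper: verify $\Uq$-linearity by checking the intertwining identity $R\,\Delta(x)=\Delta^{\mathrm{op}}(x)\,R$ on the generators using Lemma~\ref{lem:EFPow}, and reduce the hexagon identities to the coproduct formula of Lemma~\ref{lem:EPow}. The paper carries this out at the level of explicit weight vectors rather than operator identities in $\Uq^{\otimes 2}$, but the computations are the same. One point you omit is invertibility of $c_{V,W}$: the paper handles this by exhibiting $\tilde R^{-1}$ explicitly via Lemma~\ref{lem:vanSum}, and you should record this step, since the finite truncation of the usual exponential $R$-matrix is not obviously invertible without it.
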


\begin{proof}
Using Lemma \ref{lem:vanSum}, the inverse of $\tilde{R}$ is seen to be
\begin{equation*}
    \tilde{R}^{-1} = \sum_{l=0}^{\red{r}-1} (-q)^{- \frac{l(l-1)}{2}} \frac{(q-q^{-1})^l}{\pser{l}!} E^l \otimes F^l.
\end{equation*}
% Indeed, we have
% \begin{eqnarray*}
%     \tilde{R}\tilde{R}^{-1} 
%     &=&
%     \sum_{l=0}^{\red{r}-1} \sum_{j=0}^{\red{r}-1} (-1)^{l-\frac{j(j-1)}{2} + lj} q^{\frac{l(l-1)}{2} - \frac{j(j-1)}{2}} \frac{(q-q^{-1})^{l+j}}{\pser{l}! \pser{j}!} E^{l+j} \otimes F^{l+j} \\
%     &=&
%     \sum_{i=0}^{\red{r}-1} \frac{(-q)^{-\frac{i(i-1)}{2}}(q-q^{-1})^i}{\pser{i}!} (E^i \otimes F^i) \sum_{l=0}^i (-1)^{\frac{l(l+1)}{2}} \pserbin{i}{l}q^{l(i-1)}
% \end{eqnarray*}
% which, by Lemma \ref{lem:vanSum}, is equal to $1$. A similar calculation verifies that $\tilde{R}^{-1}\tilde{R}=1$.
It follows easily from this that $c_{V,W}$ is a $\C$-linear isomorphism. Naturality of $c$ is clear.

Next, we prove $\Uq$-linearity of $c_{V,W}$. Let $v \in V$ and $w \in W$ be homogeneous of weight $\lambda_v$ and $\lambda_w$, respectively. Since $E^l \otimes F^l$ has $H$-weight zero, the action of $H$ commutes with each term in $\tilde{R}_{V,W}$. It follows that $c_{V,W}$ is $H$-linear. We verify $E$-linearity of $c_{V,W}$; verification of $F$-linearity is analogous. Setting $A_l = \frac{q^{\frac{l(l-1)}{2}} (q-q^{-1})^{l}}{\pser{l}!}$, we find that $E \cdot c_{V,W}(v \otimes w)$ is equal to
\[
% E \cdot c_{V,W}(v \otimes w)
% =
\sum_{l=0}^{\red{r}-1} (-1)^{\p w (\p v + l)} A_l q^{\frac{1}{2}(\lambda_v + 2l) (\lambda_w - 2l)} ((-1)^{\p w + l} F^l w \otimes E^{l+1} v + q^{\lambda_v + 2l} EF^l w \otimes E^l v ).
\]
% \begin{align*}
%     E \cdot c_{V,W}(v \otimes w) 
%     &= 
%     E \cdot \tau_{V,W} \circ \Upsilon_{V,W} \circ \tilde{R}_{V,W} (v \otimes w) \\
%     &= 
%     E \cdot \tau_{V,W} \circ \Upsilon_{V,W} \left ( \sum_{l=0}^{r-1} (-1)^{l + l \p v} A_l E^l v \otimes F^l w \right ) \\
%     &= 
%     E \cdot \tau_{V,W} \left ( \sum_{l=0}^{r-1} (-1)^{l + l \p v} A_l q^{\frac{1}{2}(\lambda_v + 2l) (\lambda_w - 2l)} E^l v \otimes F^l w \right ) \\
%     &= 
%     E \cdot \left ( \sum_{l=0}^{r-1} (-1)^{l + l \p v + (\p v + l)(\p w + l)} A_l q^{\frac{1}{2}(\lambda_v + 2l) (\lambda_w - 2l)} F^l w \otimes E^l v \right ) \\
%     &= 
%     \sum_{l=0}^{r-1} (-1)^{\p w (\p v + l)} A_l q^{\frac{1}{2}(\lambda_v + 2l) (\lambda_w - 2l)} ((-1)^{\p w + l} F^l w \otimes E^{l+1} v + q^{\lambda_v + 2l} EF^l w \otimes E^l v ).
% \end{align*}
The signs appearing above are due to the graded structure \eqref{eqn:tau}. By applying Lemma \ref{lem:EFPow}, we can rewrite this as
\begin{align*}
    E \cdot c_{V,W}(v \otimes w)
    =&
    \sum_{l=0}^{\red{r}-1} (-1)^{\p w (\p v + l + 1) + l} A_l q^{\frac{1}{2}(\lambda_v + 2l) (\lambda_w - 2l)} F^l w \otimes E^{l+1} v \\ &+
    \sum_{l=0}^{\red{r}-1} (-1)^{\p w (\p v + l) + l} A_l q^{\frac{1}{2}(\lambda_v + 2l) (\lambda_w - 2l) + \lambda_v + 2l} F^lE w \otimes E^l v  \\
    &+
    \sum_{l=1}^{\red{r}-1} (-1)^{\p w (\p v + l)} A_l q^{\frac{1}{2}(\lambda_v + 2l) (\lambda_w - 2l) + \lambda_v + 2l} \pser{l}F^{l-1}\pser{K;-l+1}w \otimes E^l v.
\end{align*}
On the other hand, we compute
\begin{multline*}
c_{V,W}(E \cdot v \otimes w)
=
\sum_{l=0}^{\red{r}-1} (-1)^{l + l \p w + \p v \p w } A_l q^{\frac{1}{2}(\lambda_v + 2l)(\lambda_w - 2l + 2)} F^l Ew \otimes E^l v \\ + 
\sum_{l=0}^{\red{r}-1} (-1)^{l \p w + \p v \p w + \p w} A_l q^{\lambda_w + \frac{1}{2}(\lambda_v + 2l + 2)(\lambda_w - 2l)} F^l w \otimes E^{l+1}v.
\end{multline*}
% {\allowdisplaybreaks
% \begin{align*}
%     c_{V,W}(E \cdot v \otimes w)
%     =& 
%     \tau_{V,W} \circ \Upsilon_{V,W} \circ \tilde{R}_{V,W} ((-1)^{\p v}v \otimes Ew + q^{\lambda_w} Ev \otimes w) \\
%     =& 
%     \tau_{V,W} \circ \Upsilon_{V,W} \left ( \sum_{l=0}^{r-1} (-1)^{l + \p v + \p v l} A_l E^l v \otimes F^l Ew \right ) \\
%     &+ 
%     \tau_{V,W} \circ \Upsilon_{V,W} \left ( \sum_{l=0}^{r-1} (-1)^{l + (\p v + 1) l} A_l q^{\lambda_w} E^{l+1}v \otimes F^l w \right ) \\
%     =& 
%     \tau_{V,W} \left ( \sum_{l=0}^{r-1} (-1)^{l + \p v + \p v l} A_l q^{\frac{1}{2}(\lambda_v + 2l)(\lambda_w - 2l + 2)} E^l v \otimes F^l Ew \right )\\
%     &+ 
%     \tau_{V,W} \left ( \sum_{l=0}^{r-1} (-1)^{\p v l} A_l q^{\lambda_w + \frac{1}{2}(\lambda_v + 2l + 2)(\lambda_w - 2l)} E^{l+1}v \otimes F^l w ) \right ) \\
%     =&
%     \sum_{l=0}^{r-1} (-1)^{l + \p v + \p v l + (l + \p v)(l + 1 + \p w)} A_l q^{\frac{1}{2}(\lambda_v + 2l)(\lambda_w - 2l + 2)} F^l Ew \otimes E^l v \\
%     &+ 
%     \sum_{l=0}^{r-1} (-1)^{\p v l + (l + 1 + \p v)(l + \p w)} A_l q^{\lambda_w + \frac{1}{2}(\lambda_v + 2l + 2)(\lambda_w - 2l)} F^l w \otimes E^{l+1}v \\
%     =& 
%     \sum_{l=0}^{r-1} (-1)^{l + l \p w + \p v \p w } A_l q^{\frac{1}{2}(\lambda_v + 2l)(\lambda_w - 2l + 2)} F^l Ew \otimes E^l v \\
%     &+ 
%     \sum_{l=0}^{r-1} (-1)^{l \p w + \p v \p w + \p w} A_l q^{\lambda_w + \frac{1}{2}(\lambda_v + 2l + 2)(\lambda_w - 2l)} F^l w \otimes E^{l+1}v.
% \end{align*}
% }
The coefficients of $F^l Ew \otimes E^l v$ in the previous two expressions are both equal to
$
(-1)^{l + \p w (\p v + l)} A_l q^{\frac{1}{2}(\lambda_v + 2l)(\lambda_w - 2l + 2)}
$
whereas the coefficients of $F^l w \otimes E^{l+1}v$ are
\[
(-1)^{\p w (\p v + l + 1)} A_l q^{\lambda_w + \frac{1}{2}(\lambda_v + 2l + 2)(\lambda_w - 2l)}
\]
and
\begin{align*}
    &(-1)^{\p w(\p v + l + 1) + l} A_l q^{\frac{1}{2}(\lambda_v + 2l)(\lambda_w - 2l)}\\
    &+ 
    (-1)^{\p w(\p v + l + 1)} A_{l+1} q^{\frac{1}{2}(\lambda_v + 2l + 2)(\lambda_w - 2l - 2) + \lambda_v + 2l + 2} \pser{l+1} \frac{q^{\lambda_w - l} - (-1)^l q^{-\lambda_w + l}}{q - q^{-1}},
    % =& 
    % (-1)^{\p w(\p v + l + 1)} A_l \left ( (-1)^l q^{\frac{1}{2}(\lambda_v + 2l)(\lambda_w - 2l)} + q^l q^{\frac{1}{2}(\lambda_v + 2l + 2)(\lambda_w - 2l)} \left ( q^{\lambda_w - l} - (-1)^l q^{-\lambda_w + l} \right ) \right ) \\
    % =&
    % (-1)^{\p w(\p v + l + 1)} A_l \left ( (-1)^l q^{\frac{1}{2}(\lambda_v + 2l)(\lambda_w - 2l)} + q^{\frac{1}{2}(\lambda_v + 2l + 2)(\lambda_w - 2l) + \lambda_w} - (-1)^l q^{\frac{1}{2}(\lambda_v + 2l)(\lambda_w - 2l)} \right ) \\
    % =&
    % (-1)^{\p w(\p v + l + 1)} A_l q^{\frac{1}{2}(\lambda_v + 2l + 2)(\lambda_w - 2l) + \lambda_w}.
\end{align*}
respectively, which are seen to be equal.
% which is equal to $(-1)^{\p w(\p v + l + 1)} A_l q^{\frac{1}{2}(\lambda_v + 2l + 2)(\lambda_w - 2l) + \lambda_w}$.
This proves $E$-linearity of $c_{V,W}$.

Verification of the hexagon axioms is similar and so is omitted.
\end{proof}

\subsection{Ribbon structure}
\label{sec:ribbon}

Let $\Gr = \C \slash 2 \Z$. Define a grading $\cat = \bigoplus_{\p \lambda \in \Gr} \cat_{\p \lambda}$, where $\cat_{\p \lambda} \subset \cat$ is the full subcategory of modules whose weights are congruent to $\p \lambda$. Let $\SSS \subset \Gr$ be the image of the set $\{\lambda \in \C \mid \lambda - \red{r} +1 \mbox{ is atypical}\}$.

\begin{Prop}
\label{prop:genSS}
The $\Gr$-graded category $\cat$ is generically semisimple with small symmetric subset $\SSS$. For $\p \lambda \in \Gr \setminus \SSS$, a completely reduced dominating set of $\cat_{\p \lambda}$ is
\[
\{V_{(\lambda-\red{r}+1, \p p)} \mid \lambda \in \C \mbox{ is a lift of } \p \lambda, \, \p p \in \Ztwo\}.
\]
\end{Prop}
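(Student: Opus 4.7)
The plan is to verify three things in turn: that $\SSS$ is small and symmetric; that the proposed Verma modules form a set of pairwise non-isomorphic simple objects with endomorphism ring $\C$; and that every object of $\cat_{\p\lambda}$ is a direct sum of these Vermas. For the first, I would use the explicit parametrization of atypical weights from Lemma~\ref{lem:atypWeights}. The preimage of $\SSS$ in $\C$ is the atypical set shifted by $\red{r}-1$, and is therefore a discrete subset of $\R$, so $\SSS$ is contained in the subgroup $\R/2\Z \subset \C/2\Z = \Gr$. Given any finite set $g_1,\ldots,g_n \in \Gr$, the translate $\bigcup_{i=1}^n (g_i + \SSS)$ is contained in $n$ translates of $\R/2\Z$ with prescribed imaginary parts, which cannot cover $\Gr$, so $\SSS$ is small. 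Symmetry follows from the duality isomorphism~\eqref{eq:dualVerma}: $V_{(\lambda,\p p)}$ is simple if and only if $V_{(-\lambda,\p p + \red{\p r} + \p 1)}$ is simple, so the atypical set is invariant under $\lambda \mapsto -\lambda$. Hence $\SSS$ and $-\SSS$ are images of the atypical set shifted by $\red{r}-1$ and $-(\red{r}-1)$ respectively, and these differ by $2(\red{r}-1) \in 2\Z$, which is trivial in $\Gr$.

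Next, fix $\p\lambda \in \Gr \setminus \SSS$. By the definition of $\SSS$, every lift $\lambda \in \C$ of $\p\lambda$ has the property that $\lambda - \red{r}+1$ is typical, so Lemma~\ref{lem:atypWeights} implies that $V_{(\lambda - \red{r}+1,\p p)}$ is simple for every $\p p \in \Ztwo$. Since these are finite dimensional simple objects of the $\C$-linear category $\cat$, Schur's lemma gives $\End_{\cat}(V_{(\lambda - \red{r}+1,\p p)}) = \C$. Two Vermas in the list are isomorphic if and only if they share both highest weight ($=\lambda$) and parity of the highest weight vector ($=\p p$); distinct lifts yield distinct highest weights. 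This establishes the completely reduced condition $\dim \Hom_{\cat}(V_i, V_j) = \delta_{ij}$.

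For the dominating condition, I would argue by induction on $\dim W$ for $W \in \cat_{\p\lambda}$. The nilpotency $E^{\red{r}} = 0$ together with finite dimensionality yields a homogeneous highest weight vector $w \in W$ of some weight $\mu \in \p\lambda$ and parity $\p w$. The $\Uq$-submodule generated by $w$ is a quotient of $V_{(\mu - \red{r}+1,\p w)}$, which is simple by the previous step, so this submodule is isomorphic to the Verma. By Proposition~\ref{prop:vermaProj} the Verma is projective, hence splits off as a direct summand of $W$. The complement lies in $\cat_{\p\lambda}$ and has strictly smaller dimension, completing the induction and showing that every module in $\cat_{\p\lambda}$ is a direct sum of Vermas from the list; in particular $\cat_{\p\lambda}$ is semisimple, giving generic semisimplicity.

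The main obstacle will be the careful bookkeeping of the weight shift $\red{r}-1$ and the parity shift $\red{\p r} + \p 1$ when establishing symmetry of $\SSS$ via \eqref{eq:dualVerma}. Beyond that, the proof is a clean combination of Lemma~\ref{lem:atypWeights}, Proposition~\ref{prop:vermaProj}, and Schur's lemma.
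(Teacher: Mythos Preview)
Your proposal is correct and follows essentially the same approach as the paper: find a highest weight vector, identify the submodule it generates with a simple typical Verma, split it off using Proposition~\ref{prop:vermaProj}, and iterate. You give considerably more detail than the paper on why $\SSS$ is small and symmetric (the paper simply asserts this is ``readily verified''), and your argument there is fine.

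One small slip worth flagging: in the induction step you write that the Verma is \emph{projective}, hence splits off as a direct summand of $W$. But the Verma arises as a \emph{submodule} of $W$, and splitting off a submodule requires injectivity, not projectivity. The paper invokes injectivity explicitly for precisely this reason. Since Proposition~\ref{prop:vermaProj} gives both, your argument goes through once you replace ``projective'' by ``injective'' (or note, as the paper does after Lemma~\ref{lem:catqPivot}, that projectives and injectives coincide in $\cat$).
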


\begin{proof}
The set $\SSS$ is readily verified to be small and symmetric. Let $\p \lambda \in \Gr \setminus \SSS$ and $V \in \cat_{\p \lambda}$ non-zero. Since $\p \lambda \in \Gr \setminus \SSS$, a homogeneous highest weight vector of $V$ generates a submodule isomorphic to $V_{(\lambda-\red{r}+1, \p p)}$ for some lift $\lambda \in \C$ of $\p \lambda$ and $\p p \in \Ztwo$. Injectivity of typical Verma modules (Proposition \ref{prop:vermaProj}) ensures the existence of a splitting $V\simeq V^{\prime} \oplus V_{(\lambda-\red{r}+1, \p p)}$ with $V^{\prime} \in \cat_{\p \lambda}$ of dimension strictly less than $V$. Iterating this argument shows that $\cat_{\p \lambda}$ is semisimple with the claimed completely reduced dominating set.
\end{proof}

Given $V, V^{\prime} \in \cat$, consider the open Hopf link invariant
\[
\Phi_{V^{\prime},V} = (\id_{V} \otimes \ev_{V^{\prime}})\circ (c_{V^{\prime},V}\otimes \id_{V^{\prime \vee}})\circ (c_{V,V^{\prime}}\otimes \id_{V^{\prime \vee}})\circ (\id_V\otimes \tcoev_{V^{\prime}})\in \End_{\cat}(V).
\]
When $\End_{\cat}(V) \simeq \C$, write $\langle \Phi_{V^{\prime},V} \rangle \in \C$ for the scalar by which $\Phi_{V^{\prime},V}$ acts.

\begin{Lem}
\label{lem:Phis}
With respect to the pivotal structure of Lemma \ref{lem:catqPivot} determined by $s \in \Z$, there is an equality
\[
\langle \Phi^{(s)}_{V_{(\lambda^{\prime}, \p p^{\prime})},V_{(\lambda, \p p)}} \rangle
=
(-1)^{\p p^{\prime}} q^{(\lambda + \red{r} - s) (\lambda^{\prime}+\red{r}-1) +(1-\red{r}) \lambda}\cdot
\begin{cases}
\red{r} & \textnormal{if } \lambda \in \frac{r}{4}(2 \Z + \red{r}+1), \\
\frac{q^{\red{r} \lambda}-q^{-\red{r} \lambda}}{q^{\lambda}+(-1)^{\red{r}} q^{-\lambda}} & \textnormal{otherwise}.
\end{cases}
\]
\end{Lem}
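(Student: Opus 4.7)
The strategy is to evaluate $\Phi^{(s)}_{V^{\prime},V}$ on a highest weight vector $v_0 \in V := V_{(\lambda,\p p)}$, writing $V^{\prime} := V_{(\lambda^{\prime},\p p^{\prime})}$ for brevity. Since $\lambda$ is implicitly typical in the statement (otherwise $V$ is not simple and $\End_{\cat}(V) \not\simeq \C$), Proposition~\ref{prop:vermaProj} guarantees $\Phi^{(s)}(v_0) = \langle \Phi^{(s)} \rangle \cdot v_0$, and computing this scalar suffices. I would push $v_0$ through the four constituent maps of $\Phi^{(s)}$ in succession, using the explicit $R$-matrix of Proposition~\ref{prop:braiding}.

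Since $Ev_0 = 0$, only the $l = 0$ term of the $R$-matrix \eqref{eq:RMatrix} contributes to the first braiding $c_{V,V^{\prime}}(v_0 \otimes v^{\prime}_m)$, where $\{v^{\prime}_m = F^m \cdot 1\otimes 1\}_{m=0}^{\red{r}-1}$ is the standard basis of $V^{\prime}$. The first braiding therefore reduces to the Koszul flip composed with the $\Upsilon$-factor $q^{\frac{1}{2}(\lambda+\red{r}-1)(\lambda^{\prime}+\red{r}-1-2m)}$. For the second braiding $c_{V^{\prime},V}$ the full $R$-matrix is required: iterating the commutator formula \eqref{eq:[E,F^n]} yields $E^{l} v^{\prime}_m = C(l,m)\, v^{\prime}_{m-l}$ for an explicit scalar $C(l,m)$, while $F^{l} v_0 = v_l$ by the definition of the Verma basis.

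Tensoring in $\tcoev_{V^{\prime}}(1) = \sum_m v^{\prime}_m \otimes (v^{\prime}_m)^{\vee}$ and applying $\id_V \otimes \ev^{(s)}_{V^{\prime}}$, the dual-basis orthogonality forces $m - l = m$, so only $l = 0$ survives; in particular the coefficient $C(l,m)$ drops out. The Koszul signs collapse thanks to $(\p x)^2 = \p x$ in $\Ztwo$, and using the pivotal constraint $q^{2s} = 1$ from Lemma~\ref{lem:catqPivot} together with \eqref{eq:q2r} one obtains
\[
\langle \Phi^{(s)} \rangle = (-1)^{\p p^{\prime}} q^{(\lambda + \red{r} - s)(\lambda^{\prime} + \red{r} - 1)} \sum_{m=0}^{\red{r}-1} \bigl((-1)^{\red{r}+1} q^{-2\lambda}\bigr)^{m}.
\]

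The geometric sum splits into two cases. Its common ratio equals $1$ exactly when $q^{2\lambda} = (-1)^{\red{r}+1}$, which in terms of $\lambda$ translates via direct case analysis on the parity of $\red{r}$ into $\lambda \in \frac{r}{4}(2\Z + \red{r}+1)$; in this situation the sum is $\red{r}$. Otherwise the standard geometric closed form gives $\frac{1 - q^{-2\red{r}\lambda}}{1 + (-1)^{\red{r}}q^{-2\lambda}}$, and pulling out the factor $q^{(1-\red{r})\lambda}$ from both numerator and denominator rewrites this as $q^{(1-\red{r})\lambda}\cdot\frac{q^{\red{r}\lambda} - q^{-\red{r}\lambda}}{q^{\lambda} + (-1)^{\red{r}} q^{-\lambda}}$, matching the stated formula. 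The main obstacle is the careful tracking of Koszul signs through the double braiding and the $\ev^{(s)}$-pairing; the clean cancellation to the alternating prefactor $(-1)^{\p p^{\prime} + \p m}$ and the simple form of the common ratio rely crucially on the identity $q^{2\red{r}} = (-1)^{\red{r}}$ from \eqref{eq:q2r}.
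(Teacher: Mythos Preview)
Your proposal is correct and follows essentially the same approach as the paper: evaluate on the highest weight vector, observe that only the swap and diagonal $\Upsilon$ parts of the braiding contribute, and reduce to the geometric sum $\sum_{m=0}^{\red{r}-1} \bigl((-1)^{\red{r}+1} q^{-2\lambda}\bigr)^{m}$. You give more detail than the paper on why the $l>0$ terms of $\tilde{R}$ drop out (via dual-basis orthogonality in $\ev^{(s)}_{V'}$) and on the evaluation of the geometric sum, but the underlying argument is the same.
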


\begin{proof}
Since $\End_{\cat}(V_{(\lambda, \p p)}) =\C \cdot \id_{V_{(\lambda, \p p)}}$, it suffices to compute the image of the highest weight vector under $\Phi^{(s)}_{V_{(\lambda^{\prime}, \p p^{\prime})},V_{(\lambda, \p p)}}$, to which only the swap and diagonal part $\Upsilon$ of the braiding contribute. We find
\[
\langle \Phi^{(s)}_{V_{(\lambda^{\prime}, \p p^{\prime})},V_{(\lambda, \p p)}} \rangle
=
(-1)^{\p p^{\prime}} q^{(\lambda + \red{r} - s) (\lambda^{\prime}+\red{r}-1)} \sum_{i=0}^{\red{r}-1} (-(-1)^{\red{r}}q^{-2\lambda})^i,
\]
and the claimed equality follows.
\end{proof}

\begin{Prop}
\label{prop:ribbonCat}
Assume that $r \not\equiv 4 \mod 8$ and give $\cat$ the pivotal structure of Lemma \ref{lem:catqPivot} determined by $s \in \Z$. The natural automorphism
\[
\theta : \id_{\cat} \Rightarrow \id_{\cat},
\qquad
\{\theta_V = \ptr_R(c_{V,V})\}_{V \in \cat}
\]
gives $\cat$ the structure of a $\C$-linear ribbon category if and only if $s = \red{r}$.
\end{Prop}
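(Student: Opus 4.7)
My plan is to verify the two ribbon axioms on the candidate twist $\theta^{(s)}$: the twist compatibility
\[
\theta^{(s)}_{V_1 \otimes V_2} = (\theta^{(s)}_{V_1} \otimes \theta^{(s)}_{V_2}) \circ c_{V_2, V_1} \circ c_{V_1, V_2}
\]
and the duality compatibility $\theta^{(s)}_{V^\vee} = (\theta^{(s)}_V)^\vee$, and to show that while the first is satisfied for every admissible $s$ (i.e.\ $s \in \Z$ with $q^{2s} = 1$), the second fails unless $s = \red r$. Naturality of $\theta^{(s)}$ follows from naturality of the braiding and cyclicity of the partial trace, and together with generic semisimplicity (Proposition \ref{prop:genSS}) both axioms reduce to scalar identities on typical simple Verma modules.

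First I would compute the scalar by which $\theta^{(s)}_V$ acts on a typical simple Verma $V = V_{(\lambda, \p p)}$. Chasing the highest weight vector $v_0$ through the diagram defining $\ptr_R(c_{V,V})$, only the $l = 0$ term of the R-matrix \eqref{eq:RMatrix} contributes because $E v_0 = 0$. Tracking the action of $\Upsilon$, the Koszul swap, $\tcoev_V$, and $\ev^{(s)}_V$ yields the closed form
\[
\theta^{(s)}(\lambda, \p p) = q^{\tfrac{1}{2} \lambda_0^2 + (1-s) \lambda_0},
\]
where $\lambda_0 := \lambda + \red r - 1$. The Koszul sign from $\ev^{(s)}$ on $v_0 \otimes v_0^\vee$ cancels the sign from the swap $\tau$, so the scalar is parity-independent; it is non-zero, so $\theta^{(s)}_V$ is an automorphism and, by Schur's lemma, acts on all of $V$ by this scalar.

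Next I would verify the twist compatibility on the simple summands of the decomposition \eqref{eqn:tensorSimpGen}. Writing $\lambda_{j,0} := \lambda_j + \red r - 1$ for $j = 1, 2$, the $k$-th summand $V_{(\mu, \p q)} \subset V_{(\lambda_1, \p p_1)} \otimes V_{(\lambda_2, \p p_2)}$ has $\mu_0 = \lambda_{1,0} + \lambda_{2,0} - 2k$, and comparing scalar actions of both sides gives a prediction for the eigenvalue $m_k$ of the double braiding. The apparent $s$-dependence collapses via $q^{2ks} = (q^{2s})^k = 1$ to the $s$-independent formula $m_k = q^{-2k(\lambda_{1,0} + \lambda_{2,0}) + 2k(k-1)}$. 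I would verify this by a direct computation of $c_{V_2, V_1} \circ c_{V_1, V_2}$ on an explicit highest weight vector of the summand, constructed as a linear combination of $\{v_a^{(1)} \otimes v_b^{(2)} \mid a + b = k\}$ with coefficients forced by $E w_k = 0$; the relevant identities invoke the super Pascal rule of Lemma \ref{lem:superPascal} and the commutator formula of Lemma \ref{lem:EFPow}. Thus the twist compatibility holds for every admissible $s$.

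The decisive step is the duality compatibility. Since $\theta^{(s)}_V$ is a scalar, its transpose acts on $V^\vee$ by the same scalar. Using the isomorphism $V_{(\lambda, \p p)}^\vee \simeq V_{(-\lambda, \p p + \red{\p r} + \p 1)}$ of \eqref{eq:dualVerma} and the closed form above,
\[
\frac{\theta^{(s)}(V^\vee)}{\theta^{(s)}(V)} = q^{-2\lambda(\red r - s)}.
\]
For this ratio to equal $1$ for all typical $\lambda \in \C$, the complex exponent $-2\lambda(\red r - s)$ must vanish identically, which forces $s = \red r$. Conversely, at $s = \red r$ the ratio is $1$, the admissibility constraint $q^{2 \red r} = 1$ holds by \eqref{eq:q2r} thanks to $r \not\equiv 4 \mod 8$, and both ribbon axioms are verified. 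The main obstacle will be the direct verification of the eigenvalue $m_k$ in the twist compatibility step, as it requires assembling the highest weight vector $w_k$ and evaluating the double braiding term by term; however, the triangular form of the R-matrix in the $E^l \otimes F^l$ expansion together with the super $q$-binomial identities of Section \ref{sec:supInt} reduces this to a finite sum that collapses to the desired power of $q$.
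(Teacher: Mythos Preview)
Your proposal is correct and its decisive step---computing the twist scalar on typical Vermas and comparing $\theta^{(s)}(V^{\vee})$ with $\theta^{(s)}(V)$ via the isomorphism \eqref{eq:dualVerma}---matches the paper's proof exactly, down to the ratio $q^{-2\lambda(\red r - s)}$ and the appeal to \eqref{eq:q2r} for admissibility of $s=\red r$.

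The one substantive difference is your treatment of the balancing axiom $\theta_{V_1\otimes V_2}=(\theta_{V_1}\otimes\theta_{V_2})\circ c_{V_2,V_1}\circ c_{V_1,V_2}$. The paper simply records that this is automatic: in any braided pivotal category the twist defined by $\theta_V=\ptr_R(c_{V,V})$ always satisfies balancing, so only the self-duality condition $\theta_{V^{\vee}}=\theta_V^{\vee}$ needs checking (and this is what the cited result \cite[Theorem 2]{geer2018} reduces to the generic simples). Your plan to verify balancing by hand---constructing explicit highest weight vectors $w_k$ in each summand of \eqref{eqn:tensorSimpGen} and computing the double-braiding eigenvalue $m_k$ term by term---would work, but it is genuinely laborious and entirely avoidable. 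What you gain from the categorical shortcut is not just brevity: it also spares you from having to argue that checking balancing on generic tensor products $V_{(\lambda_1,\p p_1)}\otimes V_{(\lambda_2,\p p_2)}$ with $\lambda_1,\lambda_2,\lambda_1+\lambda_2$ all typical suffices for arbitrary objects, a reduction you assert via generic semisimplicity but do not justify, and which is more delicate for a two-variable identity than for the one-variable duality condition.
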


\begin{proof}
It is automatic that $\theta$ satisfies the balancing conditions of a ribbon structure. By generic semisimplicity and \cite[Theorem 2]{geer2018}, to prove that $\theta$ is a ribbon structure it suffices to verify that $\theta_{V^{\vee}} = \theta_{V}^{\vee}$ for all typical Verma modules.
%Since Verma modules are highest weight,
The twist of a Verma module is determined by its value on the highest weight vector, to which only the swap and diagonal part $\Upsilon$ of the braiding contribute. We find
\[
\theta_{V_{(\lambda, \p p)}} = q^{\frac{(\lambda+\red{r}-1)(\lambda + \red{r}-2s+1)}{2}} \id_{V_{(\lambda, \p p)}}.
\]
%
% We compute
% \begin{eqnarray*}
% v_0
% &\xmapsto{\id \otimes \tcoev_{V_{(\lambda, \p p)}}}&
% \sum_{i=0}^{\red{r}-1} v_0 \otimes v_i \otimes v_i^{\vee}\\
% &\xmapsto{c_{V_{(\lambda, \p p)},V_{(\lambda, \p p)}} \otimes \id}&
% \sum_{i=0}^{\red{r}-1} (-1)^{\p p(\p p + \p i)}q^{\frac{1}{2} (\lambda+\red{r}-1)(\lambda+\red{r}-1-2i)} v_i \otimes v_0 \otimes v^{\vee}_i\\
% &\xmapsto{\id \otimes \ev^{(s)}_{V_{(\lambda, \p p)}}}&
% \sum_{i=0}^{\red{r}-1} q^{\frac{1}{2} (\lambda+\red{r}-1)(\lambda+\red{r}-1-2i)+(1-s)(\lambda+\red{r}-1)}  \delta_{0,i} v_i \\
% % &=&
% % q^{\frac{1}{2} (\lambda+\red{r}-1)(\lambda+\red{r}-1)+(1-s)(\lambda+\red{r}-1)} v_0 \\
% &=&
% q^{\frac{(\lambda+\red{r}-2s +1)(\lambda+\red{r}-1)}{2}} v_0.
% \end{eqnarray*}

Using the isomorphism \eqref{eq:dualVerma}, we see that $\theta_{V^{\vee}_{(\lambda, \p p)}} = \theta_{V_{(\lambda, \p p)}}^{\vee}$ if and only if $q^{2\lambda(\red{r}-s)} =1$. This equality holds for all typical $\lambda$ if and only if $s = \red{r}$. Since we assume that $r\not\equiv 4\mod8$, equation \eqref{eq:q2r} ensures that we can indeed set $s=\red{r}$ in Lemma \ref{lem:catqPivot}.
\end{proof}

We henceforth write $\ev$ and $\coev$ for $\ev^{(\red{r})}$ and $\coev^{(\red{r})}$.

\begin{Ex}
\label{ex:oneDimMod}
Using the relation $[E,F] = \frac{K -K^{-1}}{q-q^{-1}}$, the weight $\lambda \in \C$ of a one-dimensional $\Uq$-module is seen to satisfy $q^{2\lambda}=1$, equivalently, $\lambda \in \frac{r}{2} \Z$. Given $(k, \p p) \in \Z \times \Ztwo$, let $\C^H_{(\frac{kr}{2},\p p)}$ be the one-dimensional module of weight $\frac{kr}{2}$ and parity $\p p$. 
In the notation of Theorem \ref{thm:simpleObjs}, we have $\C^H_{(\frac{kr}{2},\p p)} = S_{(\frac{kr}{2}-\red{r}+1,\p p)}$.
Computing as in the proof of Proposition \ref{prop:ribbonCat}, we find
\begin{equation}
\label{eq:twistSigma}
\theta_{\C^H_{(\frac{kr}{2},\p p)}} = q^{\frac{1}{2}(\frac{kr}{2})^2 + (1-\red{r})\frac{kr}{2}}\id_{\C^H_{(\frac{kr}{2},\p p)}}. \qedhere
\end{equation}
\end{Ex}

\begin{Rem}
\label{rem:noRibbon}
Using the ``standard" form of the $R$-matrix---as in Section \ref{sec:braiding}---it is claimed in \cite[Theorem 4.1.3]{blumen2006} that a version of the small quantum group of $\mathfrak{osp}(1|2n)$ is a ribbon Hopf superalgebra for any root of unity of order at least $3$ . However, explicit details in the computation of a ribbon element are omitted. Proposition \ref{prop:ribbonCat} shows that when $r \equiv 4 \mod 8$, the ``standard" braiding and pivotal structure---as in Section \ref{sec:weightMod}---do not induce a ribbon structure on the restricted quantum group. Note also that, by Proposition \ref{prop:HopfNoGo}, there is no other restricted unrolled quantum group associated to $\osp$ which might resolve this issue. In future work, we give a Hopf algebraic perspective on this problem. Similar conditions on the order of $q$ were found in \cite{chenyang2018} for the Drinfeld double of a finite dimensional Taft superalgebra to have a ribbon element. For other examples of quantum groups which have braidings without compatible ribbon structures, see \cite[\S 10.4]{negron2021} and \cite[\S 10]{negron2023}.
\end{Rem}

\subsection{Non-degenerate relative pre-modularity}

\begin{Lem}[{\emph{cf}. \cite[Theorem 3.31]{anghel2021}}]
\label{lem:catUnimod}
The category $\cat$ is unimodular.
\end{Lem}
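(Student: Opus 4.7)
The approach is to interpret unimodularity of the locally finite category $\cat$ as the condition that the projective cover $P_\unit$ of the monoidal unit has socle isomorphic to $\unit$ (equivalently, $P_\unit$ is self-dual as a projective-injective indecomposable), which is the condition that guarantees an ambidextrous modified trace on the ideal of projectives. The plan is to identify $\unit$ among the simples, exhibit $P_\unit$ as a direct summand of a tensor product of typical Verma modules, and then use self-duality of that tensor product to conclude.

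First, I would locate the unit in the classification of Theorem \ref{thm:simpleObjs}. The conditions $H = 0$, $K = 1$ force highest weight $0 = \lambda + \red{r} - 1$, so $\unit \simeq S_{(1-\red{r}, \p 0)}$, which is atypical with atypicality data $n = 1$ in Lemma \ref{lem:atypWeights}. The atypical Verma module $V_{(1-\red{r}, \p 0)}$ then has $\unit$ as top and $S_{(-1-\red{r}, \p 1)}$ as socle, so the atypical block containing $\unit$ is non-semisimple and $P_\unit \neq \unit$.

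Second, I would construct $P_\unit$ as an indecomposable summand of $V \otimes V^\vee$ for a typical Verma module $V = V_{(\lambda,\p 0)}$. By Proposition \ref{prop:vermaProj} and rigidity, $V \otimes V^\vee$ is projective and injective; the coevaluation $\tcoev_V : \unit \to V \otimes V^\vee$ is nonzero, so an indecomposable summand surjects onto $\unit$, and this summand is a copy of $P_\unit$. Self-duality of $V \otimes V^\vee$ under the pivotal structure with $s = \red{r}$ (Proposition \ref{prop:ribbonCat}), together with the isomorphism $V^{\vee\vee} \simeq V$ provided by this pivotal structure, yields $(V \otimes V^\vee)^\vee \simeq V \otimes V^\vee$. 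By the Krull-Schmidt property and uniqueness of the summand containing $\unit$ on top, we obtain $P_\unit^\vee \simeq P_\unit$. Since $P_\unit^\vee$ is the injective envelope of $\unit^\vee = \unit$, this forces $\operatorname{soc}(P_\unit) = \unit$, proving unimodularity.

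The main obstacle is carrying out the summand analysis carefully: the tensor product decomposition \eqref{eqn:tensorSimpGen} is stated for typical total weight, and here $\lambda + (-\lambda) = 0$ yields the atypical total weight $1 - \red{r}$, so \eqref{eqn:tensorSimpGen} cannot be applied directly. To extract $P_\unit$, I would either deform $\lambda$ and take a suitable limit tracking how typical Verma summands merge into indecomposable projective covers of atypical simples (mirroring the strategy of \cite{anghel2021}), or alternatively bypass the block analysis by verifying at the Hopf algebra level that $S^2$ on $\Uq$ agrees with conjugation by $K^{1-\red{r}}$ (the pivotal element) and that the corresponding distinguished character is trivial; this second route sidesteps any delicate computation in the atypical block at the price of a careful treatment of integrals on the infinite-dimensional superalgebra $\Uq$.
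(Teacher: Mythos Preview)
Your overall strategy matches the paper's: embed the injective hull of $\unit$ as a summand of $V_{(\lambda,\p 0)} \otimes V_{(\lambda,\p 0)}^{\vee}$ for typical $\lambda$ and show that summand is self-dual. But the step ``By the Krull--Schmidt property and uniqueness of the summand containing $\unit$ on top, we obtain $P_\unit^{\vee} \simeq P_\unit$'' is circular. Self-duality of $V \otimes V^{\vee}$ only tells you the \emph{multiset} of indecomposable summands is closed under duality; it does not force any individual summand to be self-dual. Concretely, $P_\unit^{\vee}$ has $\unit$ as its \emph{socle}, not a priori as its top, so the ``uniqueness of the summand with $\unit$ on top'' criterion does not apply to it. Both $P_\unit$ (the summand with $\unit$ on top) and $I_\unit = P_\unit^{\vee}$ (the summand with $\unit$ in the socle) appear with multiplicity one, but nothing you have written rules out that they are two distinct summands interchanged by duality. (Also, a minor slip: $\tcoev_V$ maps $\unit$ \emph{into} $V \otimes V^{\vee}$, giving $\unit$ in the socle of a summand, not a surjection onto $\unit$.)

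The paper closes exactly this gap by a direct weight-space computation: the extremal weight vectors $v_+ = v_0 \otimes v_{\red{r}-1}^{\vee}$ and $v_- = v_{\red{r}-1} \otimes v_0^{\vee}$ span one-dimensional weight spaces of $V \otimes V^{\vee}$, so each lies in a single summand, and these two summands are dual to one another. Using Lemma~\ref{lem:EFPow} one checks that $F^{\red{r}-1} v_+$ and $E^{\red{r}-1} v_-$ are both nonzero $\Uq$-invariants, hence both lie in the unique summand $P_0$ containing $\unit$ in its socle. This forces $v_+, v_- \in P_0$, so $P_0^{\vee} \simeq P_0$. Your proposal needs an analogous concrete input; the abstract Krull--Schmidt argument alone is not enough, and your alternative Hopf-algebraic route via integrals is only sketched.
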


\begin{proof}
Since projectivity and injectivity in $\cat$ coincide, it suffices to prove self-duality of the injective hull of the trivial module $\unit=\C$. Let $\lambda \in \C$ be typical, so that $V_{(\lambda, \p 0)}$ is projective (Proposition \ref{prop:vermaProj}). Write $V_{(\lambda, \p 0)} \otimes V_{(\lambda, \p 0)}^{\vee} \simeq \bigoplus_{i=0}^n P_i$ as a direct sum of projective indecomposables. Adjunction and simplicity of $V_{(\lambda, \p 0)}$ give
\[
\Hom_{\cat}(\unit, V_{(\lambda, \p 0)} \otimes V_{(\lambda, \p 0)}^{\vee})
\simeq 
\Hom_{\cat}(V_{(\lambda, \p 0)}, V_{(\lambda, \p 0)})
\simeq \C
\]
so that the injective hull of $\unit$ appears in $V_{(\lambda, \p 0)} \otimes V_{(\lambda, \p 0)}^{\vee}$ with multiplicity one; call the corresponding summand $P_0$. Since $v_+:=v_0 \otimes v_{\red{r}-1}^{\vee}$ and $v_-:=v_{\red{r}-1} \otimes v_0^{\vee}$ span the weight spaces of $V_{(\lambda, \p 0)} \otimes V_{(\lambda, \p 0)}^{\vee}$ of weights $2(\red{r}-1)$ and $-2(\red{r}-1)$, respectively, we have $v_+ \in P_i$ and $v_- \in P_j$ for some $i$ and $j$ which satisfy $P_i^{\vee} \simeq P_j$. On the other hand, using Lemma \ref{lem:EFPow} we verify that $F^{\red{r}-1} v_+$ and $E^{\red{r}-1}v_-$ are non-zero $\Uq$-invariant vectors and so are elements of $P_0$. It follows that $i=j=0$ and $P_0$ is self-dual.
\comm{The last statement also uses the commutation relation $[F,E^{\red{r}-1}]$ and that $q^{2\red{r}}=(-1)^{\red{r}}$.}
\end{proof}

\begin{Prop}\label{prop:mTrace}
Up to a global scalar, there exists a unique modified trace $\mt$ on the ideal of projective objects of $\cat$.
\end{Prop}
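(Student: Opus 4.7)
The plan is to reduce the proposition to general existence and uniqueness results for modified traces on pivotal categories, relying crucially on the unimodularity established in Lemma \ref{lem:catUnimod} and on the abundance of simple projective objects provided by Proposition \ref{prop:vermaProj}. Both the existence and uniqueness statements are standard consequences of the presence of a simple projective generator of the projective ideal together with unimodularity.

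For uniqueness, I would fix a typical weight $\lambda \in \C$ and exploit the fact that $V_{(\lambda, \p 0)}$ is a simple projective object. Since $\End_{\cat}(V_{(\lambda, \p 0)}) = \C$ by Schur's lemma, any modified trace is determined on $V_{(\lambda, \p 0)}$ by the single scalar $\qd(V_{(\lambda, \p 0)}) = \mt_{V_{(\lambda, \p 0)}}(\id_{V_{(\lambda, \p 0)}})$. To propagate this to an arbitrary projective $P$, I would argue that $P$ lies in the monoidal ideal generated by $V_{(\lambda, \p 0)}$. Two facts collected in the proof of Lemma \ref{lem:catUnimod} make this possible: the injective hull $P_0$ of $\unit$ appears as a direct summand of $V_{(\lambda, \p 0)} \otimes V_{(\lambda, \p 0)}^{\vee}$, and since $P$ is a direct summand of $P \otimes P_0$ (via the coevaluation of $P_0$), it follows that $P$ is a direct summand of a tensor product involving $V_{(\lambda, \p 0)}$. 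The cyclicity and partial trace axioms of Definition \ref{def:mtrace} then express $\mt_P(f)$ for any $f \in \End_{\cat}(P)$ in terms of the single scalar $\qd(V_{(\lambda, \p 0)})$, yielding uniqueness up to global rescaling.

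For existence, I would appeal to the general criterion (developed by Geer--Kujawa--Patureau-Mirand and extended by subsequent authors) that a pivotal category with enough projectives admits a non-zero modified trace on its projective ideal if and only if it is unimodular. Lemma \ref{lem:catUnimod} supplies the required unimodularity, so a modified trace exists. Concretely, one may define $\mt$ by fixing its value on $\id_{V_{(\lambda, \p 0)}}$ and extending to all projectives via the reduction described in the uniqueness argument; unimodularity guarantees that this extension is well-defined, i.e., independent of the chosen presentation of a projective as a summand of a tensor product with $V_{(\lambda, \p 0)}$.

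The main obstacle I anticipate is verifying that this framework, which is cleanest in the finite tensor category setting, transfers to the locally finite but non-finite category $\cat$. The features of $\cat$ that make the transfer possible are the existence of a finite-dimensional simple projective (any typical Verma module) that generates the projective ideal, the finite-dimensionality of the injective hull $P_0$ of $\unit$, and the fact that projectives and injectives coincide thanks to the pivotal structure of Lemma \ref{lem:catqPivot}. With these ingredients in place, the standard arguments apply with only cosmetic modifications.
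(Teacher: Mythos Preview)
Your proposal is correct and follows essentially the same route as the paper: reduce to general existence/uniqueness results for modified traces on the projective ideal of a locally finite pivotal tensor category, with unimodularity (Lemma \ref{lem:catUnimod}) as the key hypothesis. The paper cites \cite[Corollary 5.6]{geer2022} directly rather than sketching the simple-projective-generator argument you outline, and it adds one small step you omit: the cited result yields a \emph{right} modified trace, and the braiding (Proposition \ref{prop:braiding}) is then invoked to conclude that this right trace is in fact a two-sided modified trace.
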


\begin{proof}
The category $\cat$ is a locally finite pivotal $\C$-linear tensor category with enough projectives. Since $\cat$ is unimodular, \cite[Corollary 5.6]{geer2022} applies and we conclude that the ideal of projectives has a unique non-trivial right modified trace. Since $\cat$ is braided (Proposition \ref{prop:braiding}), this right modified trace is a modified trace. 
\end{proof}

Let $\lambda, \lambda^{\prime} \in \C$ be typical. Cyclicity of the modified trace implies
\[
\mt_{V_{(\lambda,\p p)} }(\Phi_{V_{(\lambda^{\prime}, \p p^{\prime})},V_{(\lambda,\p p)} })=\mt_{V_{(\lambda^{\prime}, \p p^{\prime})}}(\Phi_{V_{(\lambda,\p p)} ,V_{(\lambda^{\prime}, \p p^{\prime})}}).
\]
Evaluating each side of this equation using Lemma \ref{lem:Phis} and equation \eqref{eq:q2r} gives
\[
(-1)^{\p p^{\prime}} q^{\lambda\lambda^{\prime}}\frac{q^{\red{r} \lambda}-q^{-\red{r} \lambda}}{q^{\lambda}+q^{-\lambda}} \qd(V_{(\lambda,\p p)})
=
(-1)^{\p p} q^{\lambda\lambda^{\prime}}\frac{q^{\red{r} \lambda^{\prime}}-q^{-\red{r} \lambda^{\prime}}}{q^{\lambda^{\prime}}+q^{-\lambda^{\prime}}} \qd(V_{(\lambda^{\prime}, \p p^{\prime})}).
\]
We may therefore normalize the modified trace so that
\begin{equation}
\label{eq:qdimnormalized}
\qd(V_{(\lambda, \p p)}) =
(-1)^{\p p} \frac{q^{\lambda}+q^{-\lambda}}{q^{\red{r}\lambda}-q^{-\red{r}\lambda}}.
\end{equation}
With this normalization, we have
\begin{equation}
\label{eq:openHopfSimp}
\langle \Phi_{V_{(\lambda^{\prime}, \p p^{\prime})},V_{(\lambda, \p p)}} \rangle
=
(-1)^{\p p + \p p^{\prime}} \frac{q^{\lambda^{\prime} \lambda}}{\qd(V_{(\lambda, \p p)})}.
\end{equation}

Define
\[
\even = \gcd(2,r),
\qquad
\half = \frac{2}{\even},
\qquad
\parity{r}=\frac{r}{\even}.
\]
Let
\[
I_r =
\begin{cases}
\{-\red{r}+1+2 i \mid 0 \leq i \leq \parity{r}-1\} & \mbox{if } r \equiv 1 \mod 2 \mbox{ or } r \equiv 2 \mod 4, \\
\{-\red{r}+1+i \mid 0 \leq i \leq r-1\} & \mbox{if } r \equiv 0 \mod 8.
\end{cases}
\]
For $\p \lambda \in \Gr \setminus \SSS$ with chosen lift $\lambda \in \C$, set $\lambda+ I_r = \{\lambda +i \mid i \in I_r\}$.

\begin{Prop}
\label{prop:redMod}
Assume that $r \equiv 1 \mod 2$ or $r \equiv 2 \mod 4$. Give $\cat$ the generic semisimple structure of Proposition \ref{prop:genSS} and ribbon structure of Proposition \ref{prop:ribbonCat}. Let $\FR =\Z \times \Ztwo$. The monoidal functor $\sigma: \FR \rightarrow \cat_{\p 0}$, $(k,\p p) \mapsto \C^H_{(\half k r, \p p)}$,
is a free realization  and gives $\cat$ the structure of a non-degenerate pre-modular $\Gr$-category relative to $(\FR,\SSS)$.
\end{Prop}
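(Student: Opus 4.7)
The plan is to verify, condition by condition, the items of Definitions \ref{def:free}, \ref{def:preMod}, and \ref{def:ndeg}, using the ribbon structure of Proposition \ref{prop:ribbonCat}, the grading of Proposition \ref{prop:genSS}, and the modified trace of Proposition \ref{prop:mTrace}. Throughout, the case hypotheses $r\equiv 1\pmod 2$ or $r\equiv 2\pmod 4$ fix either $(\half,\even,\red{r})=(2,1,2r)$ or $(\half,\even,\red{r})=(1,2,r)$.

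First I would show that $\sigma$ lands in $\cat_{\p 0}$ and is monoidal. Each $\sigma_{(k,\p p)}=\C^H_{(\half k r,\p p)}$ is one dimensional, and its weight $\half k r$ lies in $2\Z$ in both cases (trivially for odd $r$ and because $r\in 2\Z$ when $r\equiv 2\pmod 4$). Tensor products of one dimensional modules multiply weights and add parities, giving $\sigma_{(k,\p p)}\otimes\sigma_{(k',\p p')}\simeq \sigma_{(k+k',\p p+\p p')}$ with canonical coherence, and $\sigma_{(0,\p 0)}=\unit$. For the free realization conditions, the twist computation \eqref{eq:twistSigma} reduces the exponent on $\sigma_{(k,\p p)}$ to $\frac12(\half k r)^2+(1-\red{r})\half k r$, which I would verify lies in $r\Z$ by a direct case check (using $q^r=1$ and, in the even case, that $r/2$ is an odd integer). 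The third condition is clear: by Theorem \ref{thm:simpleObjs} every simple is of the form $V_{(\mu,\p q)}$ or $S_{(\mu,\p q)}$, and tensoring with $\sigma_{(k,\p p)}$ shifts weights in $\C$ by $\half k r$ and parity by $\p p$, so isomorphism forces $(k,\p p)=(0,\p 0)$.

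For relative pre-modularity I would take
\[
\Theta(\p\lambda)=\{V_{(\lambda+i,\p 0)}\mid i\in I_r\}
\]
for any chosen lift $\lambda$ of $\p\lambda$. Combining Proposition \ref{prop:genSS} with the observation that the $\sigma_\FR$-orbit of $V_{(\mu,\p p)}$ is $\{V_{(\mu+\half k r,\p p+\p q)}\}$ and that $|2\Z/\half r \Z|=\parity{r}=|I_r|$, one sees that $\Theta(\p\lambda)\otimes\sigma_\FR$ is completely reduced and dominating. For the compatibility condition I would exploit that $E$ and $F$ annihilate the one dimensional $\sigma_{(k,\p p)}$, so only the $l=0$ term of the $R$-matrix \eqref{eq:RMatrix} and the diagonal factor $\Upsilon$ survive. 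This yields
\[
c_{\sigma_{(k,\p p)},V}\circ c_{V,\sigma_{(k,\p p)}}=q^{\half k r\cdot \lambda}\cdot \id
\]
on vectors of $V$ of weight $\lambda$, giving the bicharacter $\psi(\p\lambda,(k,\p p))=q^{\half k r\lambda}$. Independence of the lift of $\p\lambda$ follows from $q^{2\half k r}=1$, i.e.\ $2\half k\in\Z$ and $q^r=1$.

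The main obstacle is non-degeneracy $\Delta_+\Delta_-\neq 0$. Substituting \eqref{eq:qdimnormalized} and $\theta_{V_{(\mu,\p 0)}}=q^{(\mu^2-(\red{r}-1)^2)/2}$ into
\[
\Delta_\pm=\sum_{i\in I_r}\qd(V_{(\lambda+i,\p 0)})\,\theta_{V_{(\lambda+i,\p 0)}}^{\pm 1},
\]
the crucial simplification is that $\red{r} i\in r\Z$ for every $i\in I_r$ in both cases, so all denominators collapse to the single factor $q^{\red{r}\lambda}-q^{-\red{r}\lambda}$, pulling outside the sum. After completing the square in the exponent, the remaining sum becomes a pair of classical quadratic Gauss sums over an arithmetic progression of length $\parity{r}$ and common difference $2$ with ratio $q^{1/2}$. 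Standard Gauss-sum reciprocity then delivers both non-vanishing and the required independence of $\Delta_\pm$ from the choice of lift $\lambda$, hence from $g$ and from $V\in\cat_g$, which is needed for $\Delta_\pm$ to be well defined by Definition \ref{def:ndeg}.
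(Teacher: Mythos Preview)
Your verification of the free-realization axioms, the choice of $\Theta(\p\lambda)$, and the bicharacter $\psi(\p\lambda,(k,\p p))=q^{\half k r\lambda}$ is correct and follows the same route as the paper. The one genuine gap is your formula for the stabilization coefficients. You write
\[
\Delta_\pm=\sum_{i\in I_r}\qd(V_{(\lambda+i,\p 0)})\,\theta_{V_{(\lambda+i,\p 0)}}^{\pm 1},
\]
but this is not what Definition~\ref{def:ndeg} gives. The defining skein relation has the $\pm1$-framed Kirby-coloured unknot \emph{encircling a through-strand} $V\in\cat_{\p\lambda}$; as an endomorphism of $V$ this produces, for each summand $W\in\Theta(\p\lambda)$, not just $\qd(W)\theta_W^{\pm1}$ but an additional open Hopf link factor $\Phi_{W,V}$ (and, in the paper's bookkeeping, further twist terms). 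Concretely, the paper computes
\[
\Delta_b=\sum_{k\in I_r}\qd(V_{(\lambda+k,\p 0)})\,\langle\theta^b_{V_{(\lambda-\red{r}+1,\p 0)}}\rangle\,\langle\theta^b_{V^{\vee}_{(\lambda+k,\p 0)}}\rangle\,\langle\Phi_{V^{-b}_{(\lambda+k,\p 0)},V_{(\lambda-\red{r}+1,\p 0)}}\rangle,
\]
and it is the $\Phi$-factor (together with the through-strand twist) that cancels the $\lambda$-dependent denominator $q^{\red{r}\lambda}-q^{-\red{r}\lambda}$ and the stray powers of $q^{\lambda}$ in the numerator, collapsing the sum to the single Gauss sum $G_b=\sum_{i=0}^{\parity{r}-1}q^{2i+b\,2i^2}$ (after observing that $G_{a,b}$ is independent of $a$), whence $\Delta_b=q^{-b(\red{r}-1)^2}G_b$.

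With your formula the expression does \emph{not} become $\lambda$-independent: after inserting \eqref{eq:qdimnormalized} and the twist you are left with
\[
\frac{q^{-(\red{r}-1)^2/2}}{q^{\red{r}\lambda}-q^{-\red{r}\lambda}}\sum_{i}(q^{\lambda+k_i}+q^{-\lambda-k_i})\,q^{(\lambda+k_i)^2/2},
\]
which carries a genuine $\lambda$-dependence that no completion of the square removes. Your observation that $\red{r} i\in r\Z$ for $i\in I_r$ (so the denominator is constant in $i$) is correct and is exactly the simplification the paper uses, but it must be combined with the Hopf-link factors to reach the clean Gauss sum. Once you insert the correct $\Delta_\pm$ formula, the remainder of your argument (evaluation of the Gauss sum to conclude $\Delta_+\Delta_-\neq0$) goes through as you outline.
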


\begin{proof}
Note that $\C^H_{(\half k r, \p p)}$ has weight $\half k r$ and so, since $\half r$ is even, lies in $\Gr$-degree $\p 0$. Equation \eqref{eq:twistSigma} gives $\theta_{\C^H_{(\half k r, \p p)}} = \id_{\C^H_{(\half k r, \p p)}}$. Let $\p \lambda \in \Gr$ with lift $\lambda \in \C$. We compute for the required bicharacter $\psi(\p \lambda, (k, \p p)) = q^{\half k r \lambda}$. If $\p \lambda \in \Gr \setminus \SSS$, then, in view of the completely reduced dominating set of Proposition \ref{prop:genSS}, we can take $\Theta(\p \lambda) = \{V_{(j,\p 0)} \mid j \in \lambda + I_r\}$.

For signs $a, b \in \{\pm \}$, define the generalized quadratic Gauss sum $G_{a,b} = \sum_{i=0}^{\parity{r}-1} q^{a2 i + b2 i^2}$. Let $\lambda \in \C$ be typical. Using that endomorphisms a Verma module consist of scalars, we find for the stabilization coefficients
\[
\Delta_b
=
\sum_{k \in I_r} \qd(V_{(\lambda+k,\p 0)}) \langle \theta^b_{V_{(\lambda-\red{r}+1,\p 0)}} \rangle \langle \theta^b_{V^{\vee}_{(\lambda+k,\p 0)}} \rangle \langle \Phi_{V^{-b}_{(\lambda+k,\p 0)},V_{(\lambda-\red{r}+1,\p 0)}} \rangle,
\]
where we have used the notation $V^+=V$ and $V^- = V^{\vee}$. Using equation \eqref{eq:q2r}, we compute
\[
\Delta_b
=
-b q^{-b(\red{r}-1)^2} \frac{q^{\lambda+1}G_{+,b} + q^{-\lambda-1}G_{-,b}}{q^{\lambda+1}+q^{-\lambda-1}}
=
-b q^{-b (\red{r}-1)^2} G_b,
\]
the second equality following from the observation that $G_{a,b}$ is independent of $a$; its common value is denoted by $G_b$. Evaluating the Gauss sum, we find
\[
\Delta_+
=
\sqrt{\parity{r}} q^{-\frac{3}{2}} \cdot
\begin{cases}
%\cdots & \mbox{if } r \equiv 0 \mod 8, \\
1 & \mbox{if } r \equiv 1 \mod 8, \\
-\I & \mbox{if } r \equiv 2 \mod 8, \\
-\I & \mbox{if } r \equiv 3 \mod 8, \\
%DNE & \mbox{if } r \equiv 4 \mod 8, \\
-1 & \mbox{if } r \equiv 5 \mod 8, \\
-1 & \mbox{if } r \equiv 6 \mod 8, \\
\I & \mbox{if } r \equiv 7 \mod 8
\end{cases}
\]
and $\Delta_- = -\overline{\Delta_+}$.
\end{proof}

When $r \equiv 0 \mod 8$, direct modifications of  the computations above show that, with the $\half=1$ free realization, $\cat$ is relative pre-modular but degenerate: $\Delta_{\pm}=0$. This motivates the following modification of Proposition \ref{prop:redMod}. In particular, note that the grading group $\Gr$ is modified.

\begin{Prop}
\label{prop:redModZeroModEight}
Assume that $r \equiv 0 \mod 8$. Give $\cat$ the ribbon structure of Proposition \ref{prop:ribbonCat}. Let $\Gr= \C \slash \Z$ with subset $\SSS = \Z \slash \Z$ and $\cat = \bigoplus_{\p \lambda \in \Gr} \cat_{\p \lambda}$ the grading by $H$-weight modulo $\Z$. Let $\FR =\Z \times \Ztwo$. The monoidal functor $\sigma: \FR \rightarrow \cat_{\p 0}$, $(k,\p p) \mapsto \C^H_{(kr, \p p)}$, is a free realization and gives $\cat$ the structure of a non-degenerate pre-modular $\Gr$-category relative to $(\FR,\SSS)$.
\end{Prop}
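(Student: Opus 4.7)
The plan is to follow the template of Proposition \ref{prop:redMod} closely, modifying only where the refined grading $\Gr=\C/\Z$ and the enlarged fundamental domain $I_r$ (now of cardinality $r$, not $\parity r$) force a change. Since $r \equiv 0 \mod 8$ we have $\red r = r/2$ divisible by $4$. First I verify that $\sigma$ is a free realization landing in $\cat_{\p 0}$: the weight $kr$ of $\C^H_{(kr,\p p)}$ is an integer, whose class in $\Gr$ is $0$, and equation \eqref{eq:twistSigma} gives
\[
\theta_{\C^H_{(kr,\p p)}}=q^{\frac{1}{2}(kr)^2+(1-\red r)kr}\,\id,
\]
with each term in the exponent a multiple of $r$, so the twist is the identity. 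The simplicity condition in Definition \ref{def:free} follows because tensoring with $\sigma_{(k,\p p)}$ shifts highest weights by $kr$ and parities by $\p p$, and hence cannot fix a simple module unless $(k,\p p)=(0,\p 0)$.

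For the bicharacter, since $E$ and $F$ annihilate the one-dimensional module $\sigma_{(k,\p p)}$, only the $l=0$ summand of \eqref{eq:RMatrix} contributes to the braiding, and a direct computation gives $\psi(\p\lambda,(k,\p p))=q^{kr\lambda}$, well-defined on $\C/\Z$ because $q^{kr}=1$. For generic semisimplicity, I note that by Lemma \ref{lem:atypWeights} every atypical weight $\frac{r}{4}(2m+p_n+1)-\red r+n$ is an integer when $r \equiv 0 \mod 8$. Hence for $\p\lambda\in\Gr\setminus\SSS$ the set $\lambda+I_r$ contains no atypical weights, and the injectivity argument of Proposition \ref{prop:genSS} shows that $\cat_{\p\lambda}$ is semisimple with completely reduced dominating set $\Theta(\p\lambda)=\{V_{(j,\p 0)}\mid j\in\lambda+I_r\}$; the $r$ elements of $I_r$ index the $\sigma_{\FR}$-orbits of typical Vermas in $\cat_{\p\lambda}$, since tensoring with $\sigma_{(k,\p p)}$ shifts the weight parameter by $kr$ and absorbs parity.

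The main computation is that of the stabilization coefficients. Mirroring the derivation in Proposition \ref{prop:redMod}, and substituting \eqref{eq:qdimnormalized}, the twist formula from the proof of Proposition \ref{prop:ribbonCat}, and \eqref{eq:openHopfSimp} into
\[
\Delta_b=\sum_{k\in I_r}\qd(V_{(\lambda+k,\p 0)})\,\langle\theta^b_{V_{(\lambda-\red r+1,\p 0)}}\rangle\,\langle\theta^b_{V^\vee_{(\lambda+k,\p 0)}}\rangle\,\langle\Phi_{V^{-b}_{(\lambda+k,\p 0)},V_{(\lambda-\red r+1,\p 0)}}\rangle,
\]
I expect the $\lambda$-dependent factors to cancel (after using $q^{\red r}=-1$ to simplify the ratio $\qd(V_{(\lambda+k,\p 0)})/\qd(V_{(\lambda-\red r+1,\p 0)})$ to an alternating sign $(-1)^{k}$ times a $k$-independent factor) and $\Delta_b$ to reduce to a complete quadratic Gauss sum over $\Z/r\Z$, possibly twisted by an alternating sign. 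Such a sum is non-vanishing for $r \equiv 0 \mod 8$ by standard reciprocity; in the cleanest case, $\sum_{i=0}^{r-1} q^{bi^2}=(1+\I^{b})\sqrt r$. Hence $\Delta_+\Delta_-\neq 0$. The main obstacle is the careful tracking of the alternating signs introduced by $q^{\red r}=-1$---a feature absent from the cases treated in Proposition \ref{prop:redMod}---and the identification of the result as a non-degenerate Gauss sum, rather than the degenerate sum over only $\parity r$ even residues modulo $r$ that would appear if one retained the coarser grading $\C/2\Z$.
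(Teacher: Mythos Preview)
Your approach is correct and matches the paper's: verify the free realization and bicharacter, observe that atypical weights are all integers when $r\equiv 0\bmod 8$ so that $\SSS=\Z/\Z$ works, take $\Theta(\p\lambda)=\{V_{(j,\p 0)}\mid j\in\lambda+I_r\}$ with $|I_r|=r$, and then compute $\Delta_b$ as in Proposition~\ref{prop:redMod}.

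The only gap is that you stop short of actually carrying out the Gauss sum and instead appeal to the ``cleanest case'' $\sum_{i=0}^{r-1}q^{bi^2}$. That is not the sum that appears here: because $I_r$ now has step size $1$ (rather than $2$) and because $q^{\red r}=-1$ introduces alternating signs in the numerator of $\qd$, the computation yields
\[
\Delta_b=q^{-b(\red r-1)^2}\,G_b,\qquad G_b=\sum_{i=0}^{r-1}(-1)^i\,q^{\,i+b\,i^2/2},
\]
with a genuine half-integer exponent and linear term. This is still a non-degenerate (generalized) Gauss sum, and the paper evaluates it explicitly to $\Delta_+=\sqrt{r}\,e^{-3\pi\I/4}q^{18}$, $\Delta_-=-\overline{\Delta_+}$. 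You have correctly anticipated both the alternating sign and the need for the full length-$r$ sum; just finish the bookkeeping to arrive at this closed form rather than the untwisted model.
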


\begin{proof}
Repeating the proof of Proposition \ref{prop:genSS} shows that $\cat$ is generically semisimple with small symmetric subset $\SSS$. The required bicharacter is $\psi(\p \lambda,(k,\p p))=q^{k r \lambda}$. A completely reduced dominating set of $\cat_{\p \lambda}$, $\p \lambda \in \Gr \setminus \SSS$, is
\[
\{V_{(\lambda-\red{r}+1, \p p)} \mid \lambda \in \C \mbox{ is a lift of } \p \lambda, \, \p p \in \Ztwo\}
\]
so that $\Theta(\p \lambda) = \{V_{(j, \p 0)} \mid j \in \lambda + I_r\}$. As in Proposition \ref{prop:redMod}, we find $\Delta_b = -b q^{-b (\red{r}-1)^2} G_b$, where $G_b= \sum_{i=0}^{r-1} (-1)^i q^{i + b \frac{i^2}{2}}$. Evaluating this expression gives $\Delta_+ = -\sqrt{r}q^{-\frac{3}{2}}e^{-\frac{3 \pi \I}{4}}$ and $\Delta_- = -\overline{\Delta_+}$.
\end{proof}

\subsection{Relative modularity}

Let $W \in \cat_{\p 0}$. Recall that a morphism $f \in \End_{\cat}(W)$ is \emph{transparent in $\cat_{\p 0}$} if 
\[
\id_U \otimes f = c_{W,U} \circ (f \otimes \id_U) \circ c_{U,W}
\]
and
\[
f \otimes \id_V =  c_{V,W} \circ (\id_V \otimes f) \circ c_{W,V}
\]
for all $U,V \in \cat_{\p 0}$.

\begin{Lem}[{\emph{cf}. \cite[Lemma 2.3]{derenzi2020}, \cite[Lemma 4.4]{garnerGeerYoung2025}}]
\label{lem:transparent}
Let $W \in \cat_{\p 0}$. A transparent morphism $f \in \End_{\cat}(W)$ factors through a finite direct sum of one-dimensional modules in $\cat_{\p 0}$.
\end{Lem}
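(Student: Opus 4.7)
The plan is to show that $f$ factors through the subspace
\[
W_0 = \{w \in W \mid Ew = Fw = 0\} \subseteq W,
\]
which is a $\Uq$-submodule that decomposes as a finite direct sum of one-dimensional modules in $\cat_{\p 0}$. Indeed, $W_0$ is $H$-stable, and the defining relation $[E,F] = (K-K^{-1})/(q-q^{-1})$ forces every weight $\mu$ occurring in $W_0$ to satisfy $q^{2\mu}=1$; by Example \ref{ex:oneDimMod}, each homogeneous vector in $W_0$ then generates a one-dimensional submodule, and finite dimensionality of $W$ finishes the decomposition.

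To prove $\mathrm{im}(f) \subseteq W_0$, I would evaluate the transparency condition
\[
\id_U \otimes f = c_{W,U} \circ (f \otimes \id_U) \circ c_{U,W}
\]
on $u_0 \otimes w$, where $U = V_{(\lambda, \p 0)}$ is any typical Verma module in $\cat_{\p 0}$, $u_0$ is its highest weight vector, and $w \in W$ has weight $\mu$. Because $E u_0 = 0$, only the $l=0$ term of the $R$-matrix \eqref{eq:RMatrix} survives in $c_{U,W}(u_0 \otimes w)$, collapsing it to a scalar multiple of $w \otimes u_0$. After applying $f \otimes \id_U$, the second braiding $c_{W,U}$ now has $u_0$ on the right, and $F^l u_0 = u_l$ is non-zero for all $0 \le l \le \red{r}-1$, so the full $R$-matrix expansion contributes. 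Matching coefficients of the linearly independent basis vectors $u_l$ of $U$ on both sides then forces $E^l f(w) = 0$ for each $l \geq 1$, and in particular $E f(w) = 0$. A symmetric argument via the second transparency condition, or equivalently applied to a typical lowest weight Verma $V^{-}_{(\lambda, \p 0)}$, yields $F f(w) = 0$. Together these give $\mathrm{im}(f) \subseteq W_0$.

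The principal obstacle will be the careful bookkeeping of Koszul signs arising from $\tau$ and from the odd generators $E,F$ in the $R$-matrix expansion, together with checking that the scalar coefficients produced by the $R$-matrix and the diagonal operator $\Upsilon$ are non-zero for all $1 \leq l \leq \red{r}-1$; this will rely on typicality of $\lambda$ and the minimality of $\red{r}$ in Lemma \ref{lem:minVanish}. A secondary subtlety is the scalar identity extracted from the $l=0$ term, which upon ranging $\lambda$ over typical values should translate into the weight constraint $q^{2\mu}=1$, consistent with landing in $W_0$.
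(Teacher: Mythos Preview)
Your approach is essentially the same as the paper's: use the braiding against a highest weight Verma module to force $Ef(w)=0$, the lowest weight Verma to force $Ff(w)=0$, and then the relation $[E,F]=(K-K^{-1})/(q-q^{-1})$ to conclude. One small correction: typicality of $\lambda$ plays no role---the $R$-matrix coefficients are non-zero because $\pser{l}!\neq 0$ for $l<\red{r}$ and the vectors $F^l u_0$ are non-zero simply because $U$ is a Verma module; the paper in fact uses the specific module $V_{(-\red{r}+1,\p 0)}$ without invoking typicality, and your ``secondary subtlety'' about the $l=0$ term is unnecessary since you already extract $q^{2\mu}=1$ from the commutator in your first paragraph.
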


\begin{proof}
Let $v_0$ be a highest weight vector of $V_{(-\red{r}+1,\p 0)} \in \cat_{\p 0}$ and $w \in W$ a weight vector. Because $Ev_0=0$, the explicit form of the braiding implies that $c_{V_{(-\red{r}+1,\p 0)},W}(v_0 \otimes w)$ is proportional to $w \otimes v_0$. Because $f$ is transparent, $c_{W,V_{(-\red{r}+1,\p 0)}}(f(w) \otimes v_0)$ is proportional to $v_0 \otimes f(w)$. Since $\{F^i v_0 \mid 0 \leq i \leq \red{r}-1\}$ is a basis of $V_{(-\red{r}+1,\p 0)}$, we conclude that $E f(w) =0$. Arguing in the same way with $V_{(-\red{r}+1,\p 0)}$ and $v_0$ replaced with $V^{-}_{(-\red{r}+1,\p 0)}$ and its lowest weight vector $v_0^-$, we conclude that $F f(w) =0$. It follows that $\frac{K-K^{-1}}{q-q^{-1}} = [E,F]$ annihilates $f(w)$. Writing $\lambda \in \C$ for the weight of $f(w)$, we conclude that $q^{2\lambda} =1$ so that, by Example \ref{ex:oneDimMod}, each homogeneous weight vector in the image of $f$ spans a one-dimensional module. Since $W$ is in degree $\p 0$ and the image of $f$ is a direct sum of its weight spaces, we conclude that $f$ factors through a direct sum of one-dimensional modules of degree $\p 0$.
\end{proof}

Recall the notion of relative modular category from Definition \ref{def:modG}.

\begin{Thm}
\label{thm:relMod}
Assume that $r \not\equiv 4 \mod 8$. With the structures of Propositions \ref{prop:redMod} and \ref{prop:redModZeroModEight}, the category $\cat$ is $\Gr$-modular relative to $(\SSS,\FR)$ with relative modularity parameter
\[
\zeta
=
\begin{cases}
-\parity{r} & \mbox{if }r \equiv 1 \mod 2 \mbox{ or } r \equiv 2 \mod 4,\\
-r & \mbox{if }r \equiv 0 \mod 8.
\end{cases}
\]
\end{Thm}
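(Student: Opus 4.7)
The plan is to verify the modularity equation \eqref{eq:mod} directly, after first using the non-degeneracy computations of Propositions \ref{prop:redMod} and \ref{prop:redModZeroModEight} to pin down the value of $\zeta$. By \cite[Proposition 1.2]{derenzi2022}, any scalar $\zeta$ for which \eqref{eq:mod} holds must equal $\Delta_+\Delta_-$. In the proofs of the cited propositions we already have $\Delta_-=-\overline{\Delta_+}$, so $\Delta_+\Delta_- = -|\Delta_+|^2$. The explicit values of $|\Delta_+|^2$ obtained there are $\parity{r}$ in the case $r\equiv 1\mod 2$ or $r\equiv 2\mod 4$ and $r$ in the case $r\equiv 0\mod 8$, producing the claimed formula. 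This also shows that our only remaining task is to check that the LHS of \eqref{eq:mod} does factor as a multiple of the displayed RHS, with some scalar; the identification of that scalar is then automatic from the stabilization computation.

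To verify \eqref{eq:mod} itself, I would fix $g,h\in\Gr\setminus\SSS$ and simples $V_i,V_j\in\Theta(g)$, and analyze the endomorphism on the LHS. The key point is that encircling a strand with the Kirby colour $\Omega_h$ produces a morphism that is transparent in $\cat_0$: this is the standard consequence of averaging the Hopf link against a full set of simples, together with generic semisimplicity (Proposition \ref{prop:genSS}) and the bicharacter property from Definition \ref{def:preMod}\eqref{def:compat}. Once transparency is established, Lemma \ref{lem:transparent} forces the resulting endomorphism to factor through a direct sum of one dimensional $\cat_0$-modules; by Example \ref{ex:oneDimMod}, these are exhausted by $\{\sigma_k\mid k\in\FR\}$. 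This is precisely the structural form of the RHS of \eqref{eq:mod}, and it immediately implies that off-diagonal indices (i.e. contributions where the two output strands are not forced to be equal) vanish by the reduced-domination property of $\Theta(g)\otimes\sigma_\FR$.

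The remaining step is the explicit computation of the scalar $\zeta$ appearing after this factorization. Using \eqref{eq:qdimnormalized} and the open Hopf link formula \eqref{eq:openHopfSimp}, together with Lemma \ref{lem:Phis}, the coefficient in front of the $\sigma_0$-summand reduces to a sum of the form
\[
\sum_{V\in\Theta(h)} \qd(V)\,\langle\Phi_{V,V_i}\rangle\,\langle\Phi_{V,V_j^\vee}\rangle,
\]
which simplifies to a generalized quadratic Gauss sum $G_{\pm}$ of precisely the same shape that appeared in the stabilization calculations of Propositions \ref{prop:redMod} and \ref{prop:redModZeroModEight}. Evaluating this Gauss sum and comparing with Step 1 closes the argument and produces $\zeta=\Delta_+\Delta_-$ on the nose.

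The hard part will be Step 2: confirming that the Kirby-colour encircled morphism is truly transparent in $\cat_0$, and that the set $\Theta(h)$ defined via the shifted interval $\lambda+I_r$ is exactly the correct fundamental domain for which this sum converges to the expected value. In particular, the three congruence cases ($r\equiv 1\mod 2$, $r\equiv 2\mod 4$, $r\equiv 0\mod 8$) must be checked separately because the grading group $\Gr$, the free realization $\sigma$, and the index set $I_r$ change between them; the elimination of the case $r\equiv 4\mod 8$ here is precisely the obstruction identified in Proposition \ref{prop:ribbonCat}, where the ribbon condition $q^{2\overline{r}_0}=1$ fails.
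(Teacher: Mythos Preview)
Your overall strategy matches the paper's: establish transparency of the Kirby-encircled morphism, invoke Lemma \ref{lem:transparent} to factor through one-dimensional modules in $\cat_0$, and then compute the scalar. However, there are two genuine gaps.

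First, your claim that the one-dimensional modules in $\cat_0$ ``are exhausted by $\{\sigma_k\mid k\in\FR\}$'' fails precisely when $r\equiv 0\bmod 8$. In that case $\Gr=\C/\Z$ and $\sigma$ hits only $\C^H_{(kr,\p p)}$, while the module $\C^H_{(r/2,\p 0)}$ also lies in $\cat_0$ (its weight $r/2$ is an integer) but is \emph{not} in the image of $\sigma$. Consequently, for suitable $\alpha,\beta\in\Theta(\p\lambda)$ with $\alpha\neq\beta$, the Hom space $\Hom_{\cat}(V_{(\alpha,\p 0)}\otimes V_{(\beta,\p 0)}^\vee,\C^H_{(r/2,\p 0)})$ is one-dimensional, so your off-diagonal vanishing argument does not go through automatically. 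The paper handles this by computing the modified trace of the putative factorization through $\C^H_{(r/2,\p 0)}$ directly and showing it vanishes (the relevant sum is $\sum_{i=0}^{r-1}q^{\frac{r}{2}i}=0$). You need a separate argument here; reduced-domination alone is not enough.

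Second, two smaller points. Your justification of transparency (``standard consequence of averaging'') is too vague; the paper invokes the handle-slide property of \cite[Lemma 5.9]{costantino2015}, which is the actual mechanism. And your Step 3 mischaracterizes the scalar computation: after inserting \eqref{eq:openHopfSimp} into the modified-trace expression, the $q^{\pm\alpha\delta}$ factors cancel and the $\qd$-factors drop out, leaving simply $-\vert I_r\vert$. No Gauss sum appears; the answer is a cardinality count with a sign coming from the odd parity of the highest weight vector of $V_{(\alpha,\p 0)}^\vee$ (since $\red{r}$ is even). Your shortcut via $\zeta=\Delta_+\Delta_-$ is logically fine once modularity is established, but it does not replace the need to compute the LHS directly, so the circular appearance of Step 1 is harmless only because Step 3 does the real work.
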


\begin{proof}
It remains to establish the existence of a relative modularity parameter. Consider Definition \ref{def:modG} with $h = \p \gamma$ and $g = \p \lambda$ with $V_i=V_{(\alpha,\p 0)},V_j= V_{(\beta,\p 0)} \in \Theta(\p \lambda)$. Denote by $f_{\p \gamma; \alpha, \beta} \in \End_{\cat}(V_{(\alpha,\p 0)} \otimes V_{(\beta,\p 0)}^{\vee})$ the morphism determined by the left hand side of diagram \eqref{eq:mod}. The handle slide property of \cite[Lemma 5.9]{costantino2015} guarantees that $f_{\p \gamma; \alpha, \beta}$ is transparent in $\cat_{\p 0}$. By Lemma \ref{lem:transparent}, we can write
\[
f_{\p \gamma; \alpha, \beta}
=
\sum_{i=1}^m h_{\p \gamma; \alpha, \beta, i} \circ g_{\p \gamma; \alpha, \beta, i}
\]
for some
\[
g_{\p \gamma; \alpha, \beta, i} \in \Hom_{\cat}(V_{(\alpha,\p 0)} \otimes V_{(\beta,\p 0)}^{\vee},\C^H_{(\frac{k_i r}{2},\p p_i)})
\]
and
\[
h_{\p \gamma; \alpha, \beta, i} \in \Hom_{\cat}(\C^H_{(\frac{k_i r}{2},\p p_i)},V_{(\alpha,\p 0)} \otimes V_{(\beta,\p 0)}^{\vee})
\]
with $\C^H_{(\frac{k_i r}{2},\p p_i)} \in \cat_{\p 0}$; see Example \ref{ex:oneDimMod}.

Assume first that $r \equiv 1 \mod 2$ or $r \equiv 2 \mod 4$ so that $\FR$ contains all one-dimensional modules in degree $\p 0$. The set $I_{\p \lambda}$ is defined so that
\[
\Hom_{\cat}(V_{(\alpha,\p 0)} \otimes V^{\vee}_{(\beta,\p 0)}, \C^H_{(\half k_i r, \p p_i)})
\simeq
\Hom_{\cat}(V_{(\alpha,\p 0)}, V_{(\beta,\p 0)} \otimes \C^H_{(\half k_i r, \p p_i)})
\]
is zero unless $\alpha= \beta$ and $(k_i,\p p_i) = (0,\p 0)$. We may therefore assume that $\alpha=\beta$. In this case, $f_{\p \gamma; \alpha, \alpha}$ factors through the trivial module and
\begin{equation}
\label{eq:relModLarge}
f_{\p \gamma; \alpha, \alpha}
=
\zeta \tcoev_{V_{(\alpha,\p 0)}} \circ \ev_{V_{(\alpha,\p 0)}}
\end{equation}
for some constant $\zeta$. Applying $\mt_{V_{(\alpha,\p 0)} \otimes V_{(\alpha,\p 0)}^{\vee}}$ to the right hand side of equation \eqref{eq:relModLarge} gives $\zeta \qd(V_{(\alpha,\p 0)})$ while applying it to the left hand side gives
\begin{eqnarray*}
\mt_{V_{(\alpha,\p 0)} \otimes V_{(\alpha,\p 0)}^{\vee}} (f_{\p \gamma; \alpha, \alpha})
&=&
\sum_{\delta \in \gamma+ I_r} \qd(V_{(\delta,\p 0)}) \mt_{V_{(\alpha,\p 0)} \otimes V_{(\alpha,\p 0)}^{\vee}}(f_{\delta; \alpha, \alpha})\\
&=&
\qd(V_{(\alpha,\p 0)}) \sum_{\delta \in \gamma+ I_r}
%\qd(V_{(\delta,\p 0)}) \qd(V_{(\delta,\p 0)})^{-1}
\mt_{V_{(\delta,\p 0)}} \big(\Phi_{V^{\vee}_{(\alpha,\p 0)},V_{(\delta,\p 0)}} \big) \mt_{V_{(\delta,\p 0)}} \big( \Phi_{V_{(\alpha,\p 0)},V_{(\delta,\p 0)}} \big)\\
&=&
\qd(V_{(\alpha,\p 0)}) \sum_{\delta \in \gamma+ I_r} \qd(V_{(\delta,\p 0)})^2 \langle \Phi_{V^{\vee}_{(\alpha,\p 0)},V_{(\delta,\p 0)}} \rangle \langle \Phi_{V_{(\alpha,\p 0)},V_{(\delta,\p 0)}} \rangle \\
&=&
-\qd(V_{(\alpha,\p 0)}) \parity{r}.
\end{eqnarray*}
The second equality follows from isotopy invariance, defining properties of $\mt$ and simplicity of $V_{(\delta,\p 0)}$. The final equality follows from the isomorphism \eqref{eq:dualVerma} and equation \eqref{eq:openHopfSimp}. In particular, the sign in the final expression results from equation \eqref{eq:openHopfSimp} and the fact that the highest weight vector of $V^{\vee}_{(\alpha,\p 0)}$ is of odd parity, since $\red{r}$ is even. We conclude that $\zeta = -\parity{r}$.

If instead $r \equiv 0 \mod 8$, then
\[
\Hom_{\cat}(V_{(\alpha,\p 0)} \otimes V^{\vee}_{(\beta,\p 0)}, \C^H_{(\frac{k_i r}{2},\p p_i)})
\simeq
\Hom_{\cat}(V_{(\alpha,\p 0)}, V_{(\beta,\p 0)} \otimes \C^H_{(\frac{k_i r}{2},\p p_i)})
\]
is non-zero for a unique $\C^H_{(\frac{k_i r}{2},\p p_i)}$ with $k_i \in \{0,1\}$ and $\p p_i = \p 0$; call it $\C^H_{(\frac{k r}{2},\p 0)}$. It follows that 
\begin{equation}
\label{eq:relModLarge8}
f_{\p \gamma; \alpha, \beta}
=
d_{\alpha,\beta} \id_{\C^H_{(\frac{k r}{2},\p 0)}} \otimes \tcoev_{V_{(\alpha,\p 0)}} \circ \ev_{V_{(\alpha,\p 0)}}
\end{equation}
for some $d_{\alpha,\beta} \in \C$. The modified trace of the right hand side of equation \eqref{eq:relModLarge8} is $\pm d_{\alpha,\beta} \qd(V_{(\alpha,\p 0)})$ and so vanishes precisely when $d_{\alpha,\beta}=0$. Computing as in the previous paragraph, we find the modified trace of the left hand side of equation \eqref{eq:relModLarge8} to be
\[
\sum_{\delta \in \gamma+ I_r} \qd(V_{(\delta,\p 0)}) \mt_{V_{(\alpha+k,\p 0)} \otimes V_{(\alpha,\p 0)}^{\vee}}(f_{\delta; \alpha+k,\alpha})
=
-\qd(V_{(\alpha,\p 0)}) q^{k \gamma} \sum_{i=0}^{r-1} q^{\frac{kr}{2} i}.
\]
When $k=0$ the sum over $i$ is equal to $r$. When $k=1$ we have $q^{\frac{r}{2}}=-1$ and the sum over $i$ vanishes. The constant $d_{\alpha,\beta}$ therefore vanishes unless $\alpha = \beta$, in which case it is equal to $-r$. It follows that $\zeta = -r$.
\end{proof}

\section{Topological field theory from \texorpdfstring{$\Uq$}{Uq(osp(1|2))}}
\label{sec:tft}

In this section, we use the representation theoretic results of Section \ref{sec:unrolledOsp}, specifically Theorem \ref{thm:relMod},  to construct a family of decorated three-dimensional TFTs. In particular, in this section we assume that $r \not\equiv 4 \mod 8$. All manifolds are assumed oriented.

\subsection{Non-semisimple TFTs from relative modular categories}

Fix a relative modular category $\cat$. Let $\Cob_{\cat}$ be the category of decorated surfaces and their diffeomorphism classes of admissible decorated bordisms, as defined in \cite[\S 2]{derenzi2022}. An object of $\Cob_{\cat}$ is a tuple $\CS=(\Sigma, \{x_i\}, \coh, \mathcal{L})$ consisting of
\begin{itemize}
\item a closed surface $\Sigma$ with a choice $*$ of basepoint for each connected component,
\item a finite set $\{x_i\} \subset \Sigma \setminus *$ of oriented framed $\cat$-coloured points,
\item a cohomology class $\coh \in H^1(\Sigma\setminus\{x_i\}, * ;\Gr)$ such that $\coh(\mer_i) = g_i$ is the degree of the colour of $x_i$, where $\mer_i$ is the oriented boundary of a regular neighbourhood of $x_i$, and
\item a Lagrangian subspace ${\mathcal L}\subset H_1(\Sigma; \R)$.
\end{itemize}
A morphism in $\Cob_{\cat}$ is a tuple $\mathcal{M} = (M,T,\coh,m) : \CS_1 \rightarrow \CS_2$ consisting of
\begin{itemize}
\item a bordism $M: \Sigma_1 \rightarrow \Sigma_2$,
\item a $\cat$-coloured ribbon graph $T \subset M$ whose colouring is compatible with those of the marked points of $\CS_1$ and $\CS_2$,
\item a cohomology class $\coh \in H^1(M\setminus T, *_1 \cup *_2; \Gr)$ which restricts to $\coh_j$ on $\Sigma_j$, $j=1,2$, and such that the colour of each connected component $T_c$ of $T$ has degree $\coh(\mer_c) \in \Gr$, where $\mer_c$ is an oriented meridian of $T_c$, and
\item an integer $m \in \Z$, the \emph{signature defect}.
\end{itemize}
Moreover, it is required that $\mathcal{M}$ is \emph{admissible}: for each connected component $M_c$ of $M$ which is disjoint from $\Sigma_1$, at least one edge of $T \cap M_c$ is coloured by a projective object of $\cat$ or there exists an embedded closed oriented curve $\gamma \subset M_c$ such that $\coh(\gamma) \in \Gr \setminus \SSS$. Disjoint union gives $\Cob_{\cat}$ the structure of a symmetric monoidal category.

The unique pairing $\llangle -, - \rrangle: \FR \times \FR \rightarrow \{\pm 1\}$ which makes the diagram
\[
\begin{tikzpicture}[baseline= (a).base]
\node[scale=1.0] (a) at (0,0){
\begin{tikzcd}[column sep=7.0em,row sep=2.0em]
\sigma_{k_1} \otimes \sigma_{k_2} \arrow{r}[above]{c_{\sigma_{k_1},\sigma_{k_2}}} \arrow{d}[left]{\wr} & \sigma_{k_2} \otimes \sigma_{k_1} \arrow{d}[right]{\wr} \\
\sigma_{k_1+k_2} \arrow{r}[below]{\llangle k_1,k_2 \rrangle \cdot \id_{\sigma_{k_1+k_2}}} & \sigma_{k_2+k_1}
\end{tikzcd}
};
\end{tikzpicture}
\]
commute for all $k_1, k_2 \in \FR$ induces a symmetric braiding on the monoidal category $\ZVect_{\C}$ of $\FR$-graded vector spaces and their degree preserving linear maps.

\begin{Thm}[{\cite[Theorem 6.2]{derenzi2022}}]
\label{thm:relModTQFT}
A modular $\Gr$-category $\cat$ relative to $(\FR,\SSS)$ together with a choice $\mathcal{D}$ of square root of the relative modularity parameter $\zeta$ defines a symmetric monoidal functor
\[
\TQFT_{\cat}: \Cob_{\cat} \rightarrow \ZVect_{\C}.
\]
\end{Thm}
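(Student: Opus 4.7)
The plan is to follow the general strategy for constructing non-semisimple TFTs from relative modular categories by extending the associated CGP $3$-manifold invariant to a functor via the universal construction. The starting point is the invariant of closed decorated $3$-manifolds: given $(M,T,\coh,m)$ presented by surgery on a framed link $L \subset S^3$ together with a $\cat$-coloured ribbon graph $T$ in the complement, colour each component of $L$ by a Kirby colour $\Omega_g$ of index $g \in \Gr \setminus \SSS$ determined by $\coh$ on the corresponding meridian, apply the Reshetikhin--Turaev functor $F_{\cat}$ to the resulting decorated ribbon graph in $S^3$, and normalize by a factor involving $\Delta_{\pm}^{\sigma_{\pm}(L)}$ and $\mathcal{D}^{-m}$ accounting for the signature of the linking matrix and the signature defect. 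The admissibility condition on $(M,T,\coh,m)$ ensures that, on each connected component of $M$ disjoint from the boundary, either a projective colour is present---so that $F_{\cat}$ produces an endomorphism of a projective object to which the modified trace $\mt$ can be applied---or a generic $\coh$-loop is available for insertion of a Kirby colour, so the scalar invariant is well defined.

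Next, I would define the state space $\TQFT_{\cat}(\CS)$ for a decorated surface $\CS$ by the universal construction: set $\mathcal{V}(\CS)$ to be the free $\C$-vector space on admissible decorated bordisms $\emptyset \to \CS$ modulo diffeomorphism, equip it with the pairing given by gluing along $\CS$ (with reversed orientation on one side) and applying the $3$-manifold invariant, and declare $\TQFT_{\cat}(\CS)$ to be the quotient by the left radical of this pairing. The $\FR$-grading comes from the central tensor action of $\sigma_{\FR}$: inserting an $\sigma_k$-coloured arc linking the bordism near $*$ and using that $\sigma_k$ has trivial twist and acts on each graded summand by the bicharacter $\psi(g,k)$ diagonalizes $\mathcal{V}(\CS)$ by $\FR$-degree.

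Functoriality reduces to a gluing formula for the scalar invariant: given composable admissible bordisms $\mathcal{M}_1, \mathcal{M}_2$, pick surgery presentations, express both sides as a sum over Kirby colours, and use the cutting property together with the handle slide and framing-change identities. Invariance of the scalar invariant under Kirby moves is precisely where modularity enters: the handle slide (Kirby II) is exactly the content of the relative modularity axiom \eqref{eq:mod}, whose constant $\zeta$ balances the two sides of the move; the stabilization (Kirby I) produces factors $\Delta_{\pm}$, which are absorbed into the $\mathcal{D}^{-m}$ normalization provided $\mathcal{D}^2 = \zeta = \Delta_+ \Delta_-$. The signature defect $m$ is introduced precisely to turn the framing anomaly into a strict invariant. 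Finally, symmetric monoidality on disjoint unions is immediate from the multiplicativity of the invariant, while the $\FR$-graded braiding $\gamma$ on $\ZVect_{\C}$ is forced by tracking the braidings $c_{\sigma_{k_1},\sigma_{k_2}}$ on inserted arcs as two bordisms are exchanged.

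The main obstacle is showing finite-dimensionality of $\TQFT_{\cat}(\CS)$ and nondegeneracy of the quotient pairing: for $\CS$ with few projective markings the vector space $\mathcal{V}(\CS)$ is enormous, and one must argue that the pairing has finite rank. This is done by a splitting argument exhibiting a finite spanning set built from a pair of pants decomposition together with the completely reduced dominating sets $\Theta(g) \otimes \sigma_{\FR}$, reducing the computation to generic graded pieces where semisimplicity applies; carrying this through compatibly with admissibility is the technical crux. A secondary delicate point is verifying compatibility of $\gamma$ with the Koszul signs arising when $\sigma$ lands in odd-parity one-dimensional modules, which is what makes the target $\ZVect_{\C}$ super-symmetric rather than symmetric.
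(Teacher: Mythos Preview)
The paper does not prove this statement: Theorem~\ref{thm:relModTQFT} is quoted verbatim from \cite[Theorem 6.2]{derenzi2022} and is used as a black box, so there is no proof in the paper against which to compare your proposal.

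That said, your outline is a reasonable high-level sketch of De Renzi's construction, but two points are misattributed. First, handle-slide invariance (Kirby II) of the scalar invariant is \emph{not} the content of the relative modularity axiom~\eqref{eq:mod}; handle-slide invariance already holds at the relative pre-modular level (this is the ``handle slide property'' of \cite[Lemma 5.9]{costantino2015} invoked in the proof of Theorem~\ref{thm:relMod}). The relative modularity axiom is instead what makes the universal-construction state spaces behave well enough to assemble into a genuine symmetric monoidal functor rather than merely a lax one. Second, finite-dimensionality of $\TQFT_{\cat}(\CS)$ is \emph{not} part of the statement you are asked to prove and does not follow from relative modularity alone: the paper obtains it separately by observing that $\cat$ is TFT finite in the sense of \cite[Definition 1.15]{geerYoung2022} and then invoking \cite[Theorem 1.16]{geerYoung2022}. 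So your ``main obstacle'' is in fact an additional result layered on top of Theorem~\ref{thm:relModTQFT}, not an ingredient in its proof.
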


We refer to $\TQFT_{\cat}$ as the TFT associated to $\cat$. The values of $\TQFT_{\cat}$ on closed bordisms $\varnothing \rightarrow \varnothing$ coincide with a (renormalization of the) $3$-manifold invariants $N_{\cat}$ of \cite{costantino2014}; see Definition \ref{def:cgpInvtOrig} below. The former values are defined as follows. Let $R$ be a $\cat$-coloured ribbon graph in $S^3$. Suppose that an edge of $R$ is coloured by a generic simple object $V \in \cat$. Let $R_V$ be the $(1,1)$-ribbon graph obtained from $R$ by cutting an edge labelled by $V$. Then $F^{\prime}_{\cat}(R) :=\mt_V(R_V) \in \C$ is an isotopy invariant of $R$ \cite[Theorem 3]{geer2009}. With this notation, the partition function of a closed admissible bordism $\mathcal{M}=(M,T,\coh,m)$ with computable surgery presentation $L \subset S^3$ is
\begin{equation}
\label{eq:cgpInvt}
\TQFT_{\cat}(\mathcal{M})
=
\D^{-1-l} \big( \frac{\D}{\Delta_{-}} \big)^{m-\sigma(L)} F'_{\cat}(L\cup T) \in \C.
\end{equation}
Here $L$ has $l$ connected components, $\sigma(L)$ is the signature of the linking matrix of $L$ and each component $L_c$ of $L$ is coloured by the Kirby colour $\Omega_{\coh(\mer_c)}$.

We note for later use that the construction of $\TQFT_{\cat}$ requires the choice of an element $g_0 \in \Gr$ and a simple projective $V_{g_0} \in \cat_{g_0}$. Up to equivalence, $\TQFT_{\cat}$ is independent of these choices.

In the following sections we take $\cat$ to be one of the relative modular categories of Theorem \ref{thm:relMod}. The pairing $\llangle -, - \rrangle: \FR \times \FR \rightarrow \{\pm 1\}$ is
\[
\llangle (k_1,\p p_1),(k_2,\p p_2) \rrangle = (-1)^{\p p_1 \p p_2}.
\]
The category $\cat$ is TFT finite in the sense of \cite[Definition 1.15]{geerYoung2025}, as follows easily from the observation that $\cat$ is a Krull--Schmidt category whose isomorphism classes of indecomposable projectives are in bijection with isomorphism classes of simple objects. It follows from \cite[Theorem 1.16]{geerYoung2025} that all state spaces of $\TQFT_{\cat}$ are finite dimensional.

\subsection{Verlinde formulae} 
\label{sec:verlinde}

Let $\cat$ be one of relative modular categories constructed in Theorem \ref{thm:relMod}. In this section we compute the value of $\TQFT:=\TQFT_{\cat}$ on trivial circle bundles over surfaces. Together with a combinatorial description of spanning sets of state spaces of $\TQFT$, this allows for the computation of Euler characteristics and total dimensions of state spaces. These results can be seen as Verlinde formulae for $\TQFT$.

Let\footnote{We omit the required Lagrangian subspace of $H_1(\Sigma;\R)$ since it is not used in what follows. Similarly, we omit the signature defect required to define morphisms in $\Cob_{\cat}$.} $\CS=(\Sigma,\{x_1,\ldots, x_s\},\coh)$ be a decorated admissible connected surface of genus $g \geq 1$. For each $\p \lambda \in \Gr$, define the decorated $3$-manifold
\[
\CS \times S^1_{\p \lambda}=(\Sigma \times S^1,T=\{x_1, \dots, x_s\}\times S^1,\coh \oplus \p \lambda),
\]
where
\[
\coh \oplus \p \lambda
\in
H^1(\Sigma \setminus \{x_i\}; \Gr) \oplus \Gr
\simeq H^1((\Sigma \times S^1) \setminus T; \Gr).
\]
The partition function of $\CS \times S^1_{\p \lambda}$ can be computed via an explicit surgery presentation, as in \cite[\S 5.3]{blanchet2016}, \cite[\S 3.2]{geerYoung2025}. Writing $V_{(\mu_i-\red{r}+1,\p p_i)}$ for the colour of $x_i$ and setting $\mu = \sum_{i=1}^s (\mu_i -\red{r}+1)$ and $\p p = \sum_{i=1}^s \p p_i$, we find
\begin{equation}
\label{eq:verlinde}
\TQFT(\CS \times S^1_{\p \lambda})
=
(-1)^{\p p} \zeta^{g-1}\sum_{k \in I_r} q^{\mu (\lambda+k)} \left(\frac{q^{\red{r}(\lambda+k)}-q^{-\red{r}(\lambda-k)}}{q^{\lambda+k}+q^{-\lambda-k}} \right)^{2g-2-s}.
\end{equation}
% Since $\TQFT(\CS \times S^1_{\p \lambda})$ is holomorphic in $\coh$, as follows by writing $\frac{q^{\red{r}(\lambda+k)}-q^{-\red{r}(\lambda-k)}}{q^{\lambda+k}+q^{-\lambda-k}}$ as a Laurent polynomial in $q^{\lambda}$, equation \eqref{eq:verlinde} in fact holds whenever $\CS$ is admissible.

For $(k,\p p) \in \FR$, let $\TQFT_{(k,\p p)}(\CS) \subset \TQFT(\CS)$ be the subspace of $\FR$-degree $(k,\p p)$. Define the generating function of $\FR$-graded dimensions of $\TQFT(\CS)$ to be
\[
\dim_{(x,y)} \TQFT(\CS) = \sum_{(k, \p p) \in \FR} (-1)^{\p p} \dim_{\C} \TQFT_{(k,\p p)}(\CS) x^k y^{\p p} \in \Z[x^{\pm 1},y^{\pm 1}].
\]
Here we treat the parity subgroup $\Ztwo$ as the multiplicative group $\{1, -1\}$.

\begin{Thm}[Verlinde formula]
\label{thm:verlinde}
There is an equality
\[
\TQFT(\CS \times S^1_{\p \lambda}) = \dim_{(q^{\half r \p \lambda},1)} \TQFT(\CS).
\]
\end{Thm}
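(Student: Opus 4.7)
I would identify both sides of the claimed equality as different computations of the (super) trace of a distinguished endomorphism of $\TQFT(\CS)$. The cylinder $\CS \times [0,1]$ is the decorated identity bordism $\CS \to \CS$ in $\Cob_{\cat}$ with product cohomology $\coh \oplus 0$; closing it up with the extra $\p \lambda$ holonomy along the fibre is realised by first precomposing with a decorated self-equivalence of $\CS$ whose sole effect is to insert $\p \lambda$ around the $S^1$ direction. Since $\TQFT$ is symmetric monoidal and $\cat$ is TFT finite, this gives
\begin{equation*}
\TQFT(\CS \times S^1_{\p \lambda}) = \mathrm{str}_{\TQFT(\CS)}(\Phi_{\p \lambda}),
\end{equation*}
where $\mathrm{str}$ is the super trace on $\ZVect_\C$ and $\Phi_{\p \lambda}$ is the endomorphism of $\TQFT(\CS)$ induced by the twisted closure.

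Next, I would identify $\Phi_{\p \lambda}$ with the diagonal operator that scales $\TQFT_{(k,\p p)}(\CS)$ by $\psi(\p \lambda, (k,\p p))$. Under the De Renzi construction, the $\FR$-grading on $\TQFT(\CS)$ is implemented by the action of the free realization $\sigma$: a homogeneous vector of degree $(k,\p p)$ is represented by a handlebody state whose interior decoration contains a $\sigma_{(k,\p p)}$-coloured loop. In an explicit surgery presentation of $\CS \times S^1_{\p \lambda}$, the fibre holonomy $\p \lambda$ appears as a Kirby component of weight $\p \lambda$, and this component double braids with any $\sigma_{(k,\p p)}$ loop in the closure. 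By Definition \ref{def:preMod}, item \ref{def:compat}, the resulting scalar is $\psi(\p \lambda, (k,\p p))$, which by Propositions \ref{prop:redMod} and \ref{prop:redModZeroModEight} equals $q^{\half r k \lambda}$ for any lift $\lambda \in \C$ of $\p \lambda$. Since the $\Ztwo$ factor of $\FR$ is the parity subgroup with its Koszul super swap braiding, the super trace on the parity $\p p$ component contributes a sign $(-1)^{\p p}$, giving
\begin{equation*}
\TQFT(\CS \times S^1_{\p \lambda})
=
\sum_{(k,\p p) \in \FR} (-1)^{\p p} q^{\half r k \lambda} \dim_\C \TQFT_{(k,\p p)}(\CS)
=
\dim_{(q^{\half r \p \lambda},\, 1)} \TQFT(\CS).
\end{equation*}

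The main obstacle is the second step, namely rigorously identifying the twisted closure with the scalar action of the bicharacter on the $\FR$-grading. This is a structural feature of the De Renzi construction but is most convincingly established by producing an explicit surgery presentation of $\CS \times S^1_{\p \lambda}$ in which the $\p \lambda$ holonomy appears as an unknotted Kirby loop coloured by $\Omega_{\p \lambda}$, and then reducing the evaluation to the double braiding of $\sigma_{(k,\p p)}$ with a typical Verma module, in close parallel to the Verlinde formulae proofs in \cite[\S 5.3]{blanchet2016}, \cite[\S 3.2]{geerYoung2022} and \cite{garnerGeerYoung2024}. Agreement with the closed-form surgery formula \eqref{eq:verlinde} then serves as a final consistency check, and reduces the identification of the scalars to a direct computation of quantum dimensions already carried out in the proof of Theorem \ref{thm:relMod}.
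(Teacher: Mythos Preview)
Your proposal is correct and follows essentially the same approach as the paper. The paper's proof simply defers to \cite[Theorem 5.9]{blanchet2016} and \cite[Theorem 3.5]{geerYoung2022}, noting that the only modification needed is the explicit bicharacter $\psi$ from Propositions \ref{prop:redMod} and \ref{prop:redModZeroModEight}; your outline is precisely a sketch of what those cited arguments do, namely interpret $\TQFT(\CS \times S^1_{\p \lambda})$ as a graded (super) trace on $\TQFT(\CS)$ and identify the weighting on each $\FR$-degree with $\psi(\p\lambda,(k,\p p))=q^{\half r k \lambda}$.
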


\begin{proof}
The equality can be proved by a direct modification of the proofs of \cite[Theorem 5.9]{blanchet2016} and \cite[Theorem 3.5]{geerYoung2025}. The only change is the explicit form of the bicharacter $\psi$, given in the present setting in Propositions \ref{prop:redMod} and \ref{prop:redModZeroModEight}, which determines the appropriate specialization of the variables $x$ and $y$.
% Following the proofs of \cite[Theorem 5.9]{blanchet2016} and \cite[Theorem 3.5]{geerYoung2025} and denoting by $u_0$ a basis of $\TQFT(\varnothing) \simeq \C$, we compute
% \begin{eqnarray*}
% \TQFT(\CS \times S^1_{\p \lambda})u_0
% &=&
% \big(\sum_{(k,\p p) \in \FR} (-1)^{\p p} \psi(\p \lambda,(k,\p p)) \dim_{\C} \TQFT_{(k,\p p)}(\CS) \big)u_0 \\
% &=&
% \big(\sum_{(k,\p p) \in \FR} (-1)^{\p p} q^{\half k r \p \lambda} \dim_{\C} \TQFT_{(k,\p p)}(\CS) \big)u_0.
% \end{eqnarray*}
% The statement follows.
\end{proof}

\begin{Cor}
\label{cor:VerlindeEuler}
When $\CS$ has no marked points, that is, $s=0$, the Euler characteristic of $\TQFT(\CS)$ with respect to the parity subgroup is
\[
\chi(\TQFT(\CS))
=
\begin{cases}
\parity{r} & \mbox{if } r \equiv 1 \mod 2, \\
\parity{r} & \mbox{if } r \equiv 2 \mod 4, \\
r & \mbox{if } r \equiv 0 \mod 8.
\end{cases}
\]
if $g =1$ and
\[
\chi(\TQFT(\CS))
=
\begin{cases}
0 & \mbox{if } r \equiv 1 \mod 2, \\
0 & \mbox{if } r \equiv 2 \mod 4, \\
\frac{r^{3g-3}}{2^{2g-3}}& \mbox{if } r \equiv 0 \mod 8
\end{cases}
\]
if $g \geq 2$.
\end{Cor}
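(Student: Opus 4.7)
The plan is to deduce the corollary directly from the Verlinde formula of Theorem \ref{thm:verlinde}. By definition, the Euler characteristic with respect to the parity subgroup is obtained from the generating function $\dim_{(x,y)} \TQFT(\CS)$ by setting $(x,y) = (1,1)$. Since Theorem \ref{thm:verlinde} identifies $\TQFT(\CS \times S^1_{\p\lambda})$ with $\dim_{(q^{\half r \p\lambda},1)} \TQFT(\CS)$, and the right-hand side is a Laurent polynomial in $q^{\half r \p\lambda}$, the identity extends continuously to $\p\lambda \to 0$. In each of the three congruence classes, $q^{\half r \p \lambda}$ evaluates to $1$ at $\p\lambda = 0$, so $\chi(\TQFT(\CS))$ equals $\lim_{\lambda \to 0} \TQFT(\Sigma_g \times S^1_{\p\lambda})$. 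It then remains to evaluate this limit using the explicit expression \eqref{eq:verlinde} (specialized to $s=0$, $\mu = 0$, $\p p = \p 0$).

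For $g = 1$ the exponent $2g-2-s$ vanishes, so each summand equals $1$ and the sum is $\lvert I_r \rvert$. By the definition of $I_r$ given before Proposition \ref{prop:redMod}, $\lvert I_r \rvert = \parity r$ in the first two cases and $\lvert I_r \rvert = r$ for $r \equiv 0 \mod 8$, which matches the stated values.

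For $g \geq 2$ I would analyze the summands
\[
f_k(\lambda) = \frac{q^{\red r(\lambda+k)} - q^{-\red r(\lambda+k)}}{q^{\lambda+k}+q^{-\lambda-k}}
\]
as $\lambda \to 0$, using that each $k \in I_r$ is an integer and $q^{\red r k} \in \{\pm 1\}$ by Lemma \ref{lem:minVanish} and equation \eqref{eq:q2r}. Consequently the numerator vanishes to first order in $\lambda$ for every $k$. If $r$ is odd or $r \equiv 2 \mod 4$, then $r/4 \notin \Z$, so $q^{2k} \neq -1$ and the denominator is non-zero at $\lambda = 0$; hence $f_k(\lambda) = O(\lambda)$ and each $f_k(\lambda)^{2g-2}$ vanishes in the limit, giving $\chi = 0$. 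If $r \equiv 0 \mod 8$, then $r/4$ is an even integer, and the denominator of $f_k$ vanishes at $\lambda=0$ precisely when $k = \pm r/4$; both values lie in $I_r = \{-\red r + 1, \dots, \red r\}$. A short Taylor expansion shows $q^{\red r k} = 1$ for these exceptional $k$, giving numerator $\approx 2\pi i \lambda$ and denominator $\approx \mp 4\pi\lambda/r$, so $f_{\pm r/4}(\lambda) \to \mp ir/2$, while all other summands still contribute $0$ in the limit.

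Collecting the two surviving terms in the $r \equiv 0 \mod 8$ case gives
\[
\lim_{\lambda \to 0} \sum_{k \in I_r} f_k(\lambda)^{2g-2} = \left(-\tfrac{ir}{2}\right)^{2g-2} + \left(\tfrac{ir}{2}\right)^{2g-2} = 2(-1)^{g-1}(r/2)^{2g-2},
\]
and multiplying by $\zeta^{g-1} = (-r)^{g-1} = (-1)^{g-1} r^{g-1}$ yields $r^{3g-3}/2^{2g-3}$, as claimed. The main obstacle is the $r \equiv 0 \mod 8$ case: one must correctly identify that \emph{only} the two indices $k = \pm r/4$ contribute in the limit, and compute the leading-order expansions of both numerator and denominator there to obtain the exact finite value. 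The odd and $r \equiv 2 \mod 4$ cases reduce to observing that the denominator is regular, so each summand decays like $\lambda^{2g-2}$.
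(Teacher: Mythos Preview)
Your argument is correct and follows essentially the same route as the paper's own proof: apply Theorem~\ref{thm:verlinde} to identify $\chi(\TQFT(\CS))$ with the $\lambda\to 0$ limit of the Verlinde sum, read off $\lvert I_r\rvert$ when $g=1$, and for $g\geq 2$ separate the cases according to whether the denominator $q^{\lambda+k}+q^{-\lambda-k}$ can vanish at $\lambda=0$, using l'H\^opital/Taylor expansion at the exceptional indices when $r\equiv 0 \bmod 8$. Your identification of the exceptional indices as $k=\pm r/4$ is in fact the correct one (the paper's text has a typo writing $\pm r/2$), and your explicit computation of the limiting values $\mp i r/2$ and the final product with $\zeta^{g-1}=(-r)^{g-1}$ is accurate.
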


\begin{proof}
Theorem \ref{thm:verlinde} gives
\[
\chi(\TQFT(\CS)) = \lim_{\p \lambda \rightarrow 0} \TQFT(\CS \times S^1_{\p \lambda}).
\]
Since $q^{\red{r}}$ is a sign (see equation \eqref{eq:q2r}), when $s=0$ equation \eqref{eq:verlinde} becomes
\begin{equation}
\label{eq:verlindeNoPoints}
\TQFT(\CS \times S^1_{\p \lambda})
=
\zeta^{g-1}\sum_{k \in I_r} \left(\frac{q^{\red{r}\lambda}-q^{-\red{r}\lambda}}{q^{\lambda+k}+q^{-\lambda-k}} \right)^{2g-2}.
\end{equation}
When $g=1$, we obtain
\[
\chi(\TQFT(\CS))
=
\vert I_r \vert
=
\begin{cases}
\parity{r} & \mbox{if } r \equiv 1 \mod 2 \mbox{ or } r \equiv 2 \mod 4, \\
r & \mbox{if } r \equiv 0 \mod 8.
\end{cases}
\]
Consider then the case $g \geq 2$. If $r \equiv 1 \mod 2$ or $r \equiv 2 \mod 4$, then
\begin{equation}
\label{eq:limForEuler}
\lim_{\lambda \rightarrow 0} \frac{q^{\red{r}\lambda}-q^{-\red{r}\lambda}}{q^{\lambda+k}+q^{-\lambda-k}}
\end{equation}
vanishes for all $k \in I_r$. Indeed, the limit of the numerator vanishes while that of the denominator does not. It follows that $\chi(\TQFT(\CS))=0$. If instead $r \equiv 0 \mod 8$, then $\lim_{\lambda \rightarrow 0} (q^{\lambda+k}+q^{-\lambda-k})$ is zero if and only if $k \in \{\pm \frac{r}{2}\}$, in which case an application of l'H\^{o}pital's rule gives for the limit \eqref{eq:limForEuler} the value $\I \red{r}$ and we arrive at $\chi(\TQFT(\CS)) = \frac{r^{3g-3}}{2^{2g-3}}$.
\end{proof}

We turn to the computation of the total dimension $\dim_{\C} \TQFT(\CS)$. In isolation, Theorem \ref{thm:verlinde} is insufficient to do so. Indeed, without constraints on the $\FR$-support of $\TQFT(\CS)$, the total dimension cannot be accessed as a limit of $\TQFT(\CS \times S^1_{\p \lambda})$. A similar issue was encountered and resolved in \cite[\S 3.2]{geerYoung2025} in the context of TFTs constructed from the unrolled quantum group of $\gloo$. We explain the modifications for $\osp$.

Consider the following oriented trivalent graph $\widetilde{\Gamma}$:
\begin{equation*}\label{eqn:Gamma'Triv}
\begin{tikzpicture}[anchorbase] 
\draw[thick,decoration={markings, mark=at position 0.275 with {\arrow{>}},mark=at position 0.775 with {\arrow{<}}},postaction={decorate}] (4.5,0) circle (0.35);
\draw[thick,decoration={markings, mark=at position 0.275 with {\arrow{>}},mark=at position 0.775 with {\arrow{<}}},postaction={decorate}] (6.5,0) circle (0.35);
\node at (1.5,-2)  {\tiny $e_1$};
\node at (0,-0.6)  {\tiny $e_2$};
\node at (2,-0.6)  {\tiny $e_3$};

\node at (0,0.6)  {\tiny $f_1$};
\node at (1,0.3)  {\tiny $f_2$};
\node at (2,0.6)  {\tiny $f_3$};

\node at (3.5,0)  {$\cdots$};

\node at (4.5,0.6)  {\tiny $f_{2g-5}$};
\node at (5.5,0.3)  {\tiny $f_{2g-4}$};
\node at (6.5,0.6)  {\tiny $f_{2g-3}$};
\node at (4.5,-0.6)  {\tiny $e_{g-1}$};
\node at (6.5,-0.6)  {\tiny $e_g$};

\node at (7.75,1.0)  {\tiny $e^{-1}_{-1}$};
\node at (8.5,1.0)  {\tiny $e^{0}_{-1}$};
\node at (9.25,1.0)  {\tiny $e^1_{-1}$};
\node at (7.25,-0.2)  {\tiny $e_{g+1}$};
\node at (8.1,-0.3)  {\tiny $e_{g+1}^{\prime}$};
\node at (8.85,-0.25)  {\tiny $e_1^{\prime}$};
\node at (7.5,-2.0)  {\tiny $e_1$};

\draw[-,thick,decoration={markings, mark=at position 0.15 with {\arrow{<}},mark=at position 0.35 with {\arrow{<}},mark=at position 0.55 with {\arrow{<}}},postaction={decorate}] (6.85,0) to [out=0,in=90] (10.0,-0.95);
\draw[-,thick,decoration={markings, mark=at position 0.5 with {\arrow{>}}},postaction={decorate}] (6.85,-1.7) to [out=0,in=-90] (10.0,-0.95);
\draw[-,thick,decoration={markings, mark=at position 0.5 with {\arrow{<}}},postaction={decorate}] (7.75,0.75) to (7.75,0.025);
\draw[-,thick,decoration={markings, mark=at position 0.5 with {\arrow{<}}},postaction={decorate}] (8.5,0.75) to (8.5,0.05) ;
\draw[-,thick,decoration={markings, mark=at position 0.5 with {\arrow{<}}},postaction={decorate}] (9.25,-0.05) to (9.25,0.75);
\draw[-,thick,decoration={markings, mark=at position 0.5 with {\arrow{>}}},postaction={decorate}] (6.15,0) to (4.85,0);

\draw[thick,decoration={markings, mark=at position 0.275 with {\arrow{>}},mark=at position 0.775 with {\arrow{<}}},postaction={decorate}] (0,0) circle (0.35);
\draw[thick,decoration={markings, mark=at position 0.275 with {\arrow{>}},mark=at position 0.775 with {\arrow{<}}},postaction={decorate}] (2,0) circle (0.35);
       
\draw[-,thick,decoration={markings, mark=at position 0.5 with {\arrow{>}}},postaction={decorate}] (1.65,0) to (0.35,0);
\draw[-,thick,decoration={markings, mark=at position 0.5 with {\arrow{>}}},postaction={decorate}] (-0.35,0) to [out=220,in=180] (2,-1.7);

\node at (3.5,0)  {$\cdots$};
\node at (6.5,-1.7)  {$\dots$};
\end{tikzpicture}.
\end{equation*}

\begin{Def}
\label{def:colouring}
A \emph{colouring of $\widetilde{\Gamma}$ of degree $k \in \FR$} is a function $\Co: \textnormal{Edge}(\widetilde{\Gamma}) \rightarrow \textnormal{Ob}(\cat)$ such that \[
\Co(e_{-1}^1) = V_{g_0} = \Co(e_{-1}^{-1}),
\qquad
\Co(e_{-1}^0)=\sigma_k
\]
and assigning to each of the remaining edges $e$ a simple module $V_{(\alpha_e-\red{r}+1,\p p_e)}$, where $(\alpha_e,\p p_e) \in \C \times \Ztwo$ is congruent to $\coh(\mer_e) \in \Gr$, such that the following \emph{Balancing Condition} holds: at each trivalent vertex, the algebraic sum of the colours of incident edges is consistent with the isomorphism \eqref{eqn:tensorSimpGen}.
\end{Def}

Fix a lift $\tilde{\coh} \in H^1(\Sigma; \C \times \Ztwo)$ of $\coh \in H^1(\Sigma; \Gr)$ and let
\[
\mathfrak{C}_k
=
\{\mbox{degree $k$ colourings of $\widetilde{\Gamma}$} \mid \Co(e_i) = V_{\tilde{\coh}(\mer_{e_i}) +j} \mbox{ for some } j \in I_r, \; i=1,\dots, g \}.
\]
Set $\mathfrak{C} = \bigsqcup_{k \in \FR} \mathfrak{C}_k$. As explained in \cite[\S 1.6]{geerYoung2025}, each colouring $c \in \mathfrak{C}_k$ defines a vector $v_c =(\tilde{\eta},\widetilde{\Gamma}_c,\coh,0) \in \TQFT_{-k}(\CS)$. The construction is based off the fact that $\widetilde{\Gamma}$ is a modification of an oriented spine of a genus $g$ handlebody with a open ball removed; the modification is required to construct vectors of non-zero degree.

\begin{Thm}
\label{thm:genusgStateSpace}
Let $\CS=(\Sigma, \coh)$ be a decorated connected surface of genus $g \geq 1$ without marked points. Assume that $2\coh$ is not in the image of the canonical map $H^1(\Sigma; \SSS) \rightarrow H^1(\Sigma; \Gr)$ and $\coh(\mer_{e_1})+g_0 \not\in \SSS$. Then $\{v_c \mid c \in \mathfrak{C}_k\}$ spans $\TQFT_{-k}(\CS)$. Moreover, $\TQFT_{-k}(\CS)$ is trivial unless $k=(d, \p d)$ for some $d \in \Z$ which, if $r \equiv 0 \mod 8$, must be even.
\end{Thm}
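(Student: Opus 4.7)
The plan is to span $\TQFT_{-k}(\CS)$ using the universal construction underlying $\TQFT$, reduce arbitrary admissible ribbon graphs in the standard handlebody to the model spine $\widetilde{\Gamma}$ via skein theory and generic semisimplicity, and finally extract the $\FR$-support constraint from a parity analysis at trivalent vertices.

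By \cite[Theorem 6.2]{derenzi2022}, the state space $\TQFT_{-k}(\CS)$ is generated by classes $[H,T,\tilde{\coh},m]$ where $H$ is a handlebody with $\partial H = \Sigma$, the ribbon graph $T \subset H$ is admissible of total $\FR$-degree $-k$, and $\tilde{\coh}$ restricts to $\coh$. I would first isotope $T$ and apply Reshetikhin--Turaev skein moves to push it into a tubular neighbourhood of a chosen spine of $H$; to accommodate non-trivial $\FR$-degree this spine must be modified near the basepoint by the $\sigma_{\FR}$-carrying edge $e_{-1}^0$ together with its two parallel $V_{g_0}$-edges, producing precisely $\widetilde{\Gamma}$. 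The hypothesis that $2\coh$ is not in the image of $H^1(\Sigma;\SSS) \to H^1(\Sigma;\Gr)$ guarantees that every edge colour can be resolved in the completely reduced dominating set of Proposition \ref{prop:genSS} even after the finitely many $H_r$-shifts that can appear through the isomorphism \eqref{eqn:tensorSimpGen}, while the hypothesis $\coh(\mer_{e_1}) + g_0 \notin \SSS$ ensures that the $V_{g_0}$-insertion is projective so that admissibility persists throughout the reduction.

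With the graph pushed onto $\widetilde{\Gamma}$, the Balancing Condition of Definition \ref{def:colouring} follows directly from \eqref{eqn:tensorSimpGen}: at a trivalent vertex whose incident edges carry Verma colours $V_{(\alpha-\red{r}+1,\p p)}$, $V_{(\beta-\red{r}+1,\p q)}$ and $V_{(\gamma-\red{r}+1,\p s)}$, the only non-vanishing hom-space requires $\alpha+\beta-\gamma-\red{r}+1 = 2i$ for some integer $0 \leq i \leq \red{r}-1$ together with the parity constraint $\p s = \p p + \p q + \p i$; the residual $\sigma_{\FR}$-shifts are absorbed into the data of $\mathfrak{C}_k$. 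Following the template of \cite[Theorem 3.7]{geerYoung2022} for $\gloo$, collecting all surviving contributions yields the spanning set $\{v_c \mid c \in \mathfrak{C}_k\}$.

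To obtain the support restriction to $k=(d,\p d)$, I would sum the local parity relation $\p p + \p q + \p s = \p{i_v}$ over all trivalent vertices of $\widetilde{\Gamma}$. Each internal edge of the spine appears at exactly two vertices and therefore contributes trivially to the total sum in $\Ztwo$, so only the edge coloured by $\sigma_k$ and the adjacent $V_{g_0}$-edges give non-cancelling contributions. The resulting global identity expresses the parity component of $k$ in terms of the integer half of the $\sigma_k$-weight $\half dr$, which collapses to $\p p = \p d$ when $r$ is odd or $r \equiv 2 \pmod 4$. When $r \equiv 0 \pmod 8$, Proposition \ref{prop:redModZeroModEight} gives the $\sigma$-weight as $dr$, and the analogous reduction forces $d$ to be even in order that the required integer $i_v$ at the $\sigma_k$-vertex exist. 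The main obstacle I anticipate is precisely this final parity bookkeeping: one must coherently track orientation conventions, the signs introduced by dualisation at each vertex, and the contribution of the constants $\red{r}-1$ accumulated across the $g$ loops and $2g-3$ internal edges of $\widetilde{\Gamma}$, so that the global sum collapses to the single constraint tying $\p p$ to $d$ claimed in the theorem.
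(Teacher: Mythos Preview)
Your spanning argument is the paper's argument: both defer to the template of \cite{geerYoung2022} and use the stated hypotheses on $\coh$ in exactly the way you describe. No issue there.

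For the support constraint, however, your approach diverges from the paper's and has a genuine gap. The paper does not argue via a global parity sum. Instead it fixes the colours of the loop edges $e_1,\dots,e_g$, solves the Balancing Conditions along $f_1,\dots,f_{2g-3}$ as an explicit linear recursion in parameters $\epsilon_1,\dots,\epsilon_{2g-3}\in\{0,-1,\dots,-(\red{r}-1)\}$, and then imposes the four remaining vertex conditions near the $\sigma_k$-insertion. This produces two explicit equations,
\[
\half k_1 r \;=\; 2\Big(\epsilon'-\epsilon''-\epsilon'''-\textstyle\sum_j(-1)^j\epsilon_j\Big),
\qquad
\p p \;=\; \p\epsilon'+\p\epsilon''+\p\epsilon'''+\textstyle\sum_j\p\epsilon_j,
\]
from which the claimed form of $k$ follows by inspecting the parity of $\frac{\half r}{2}$.

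Your global parity sum gives only the second of these two equations: $\p p=\sum_v\p{i_v}$. That alone does not tie $\p p$ to the integer component $k_1$; to do so you must also run the analogous global \emph{weight} sum, obtain $\half k_1 r=2\sum_v(\pm i_v)$, and then observe that the two right-hand sides agree modulo $2$. You allude to ``the integer half of the $\sigma_k$-weight'' but never actually state or derive this weight identity, so the bridge from $\p p$ to $\p{k_1}$ is missing. More seriously, your explanation of the $r\equiv 0\bmod 8$ case is incorrect: the vertex at which $\sigma_k$ is attached is a tensor with a one-dimensional module and carries no shift parameter $i_v$ at all, so there is no ``required integer $i_v$ at the $\sigma_k$-vertex'' whose existence could fail. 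The evenness in that case comes instead from the weight equation $k_1 r=2d$ together with $r/2$ being even, forcing $\p p=\p d=\p 0$.
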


\begin{proof}
A direct generalization of the argument from the proof of \cite[Theorems 3.3 and 4.1]{geerYoung2025} establishes the spanning statement; this uses the stated assumptions on $\coh$.

Next, we study the set $\mathfrak{C}$. Assume that $g \geq 2$; the slightly degenerate case $g=1$, where $e_1=f_0$, is dealt with similarly. An element of $\mathfrak{C}$ can be constructed as follows. For each $i=1, \dots, g$, colour $e_i$ by a simple module $V_{(\alpha_i-\red{r}+1, \p p_i)}$ satisfying the constraint in the definition of $\mathfrak{C}_k$. Next, colour $f_1, f_2, \dots, f_{2g-3}$ recursively so as to satisfy the Balancing Conditions. The colours $\{V_{(\beta_i-\red{r}+1,\p q_i)}\}$ of $\{f_i\}$ are determined by $(\epsilon_1, \dots, \epsilon_{2g-3}) \in \{-i \mid 0 \leq i \leq \red{r}-1\}^{\times 2g-3}$ through the initial condition $\beta_1 = \alpha_1 - \alpha_2 -2\epsilon_1$ and the recursive system
\[
\begin{cases}
\beta_{2i} =\beta_{2i-1} +\alpha_{i+1} + 2\epsilon_{2i} & \mbox{for } 1 \leq i \leq g-2, \\
\beta_{2i+1} + \alpha_{i+2} + 2\epsilon_{2i+1}=\beta_{2i} & \mbox{for }1 \leq i \leq g-2.
\end{cases}
\]
The solution is
\[
\begin{cases}
\beta_{2i}=\alpha_1+ 2\sum_{j=1}^{2i} (-1)^j 2\epsilon_j,
 & \mbox{for } 1 \leq i \leq g-2, \\
\beta_{2i+1} = \alpha_1 - \alpha_{i+2} +2\sum_{j=1}^{2i+1} (-1)^j \epsilon_j & \mbox{for }1 \leq i \leq g-2.
\end{cases}
\]
Similarly, we find for the parities
\[
\begin{cases}
\p q_{2i}=\p p_1+ \sum_{j=1}^{2i} \p \epsilon_j & \mbox{for } 1 \leq i \leq g-2, \\
\p q_{2i+1} = \p p_1 + \p p_{i+2}+ \sum_{j=1}^{2i+1} \p \epsilon_j & \mbox{for } 1 \leq i \leq g-2.
\end{cases}
\]
Writing $\sigma_k=\C^H_{(\half k r, \p p)}$, the colours of the edges $e_{g+1}$, $e_{g+1}^{\prime}$ and $e_1^{\prime}$ are subject only to the Balancing Conditions associated to the four top right vertices of $\widetilde{\Gamma}$:
\[
(\alpha_1,\p p_1) + g_0 +2 \epsilon^{\prime} = (\alpha_1^{\prime},\p p_1^{\prime}) ,
\]
\[
(\alpha_{g+1}^{\prime}, \p p_{g+1}^{\prime}) + (\half k r, \p p) = (\alpha_1^{\prime},\p p_1^{\prime}),
\]
\[
(\alpha_{g+1}^{\prime}, \p p_{g+1}^{\prime})
=
(\alpha_{g+1}, \p p_{g+1}) + g_0 + 2 \epsilon^{\prime \prime},
\]
\[
(\alpha_{g+1},\p p_{g+1}) = (\alpha_{g},\p p_g) + (\beta_{2g-3},\p q_{2g-3}) + 2 \epsilon^{\prime \prime \prime}.
\]
Here $\epsilon^{\prime}, \epsilon^{\prime \prime}, \epsilon^{\prime \prime \prime} \in \{-i \mid 0 \leq i \leq \red{r}-1\}$. These equations hold if and only if
\[
\half k r=2\epsilon^{\prime} - 2 \epsilon^{\prime \prime} -2 \epsilon^{\prime \prime \prime} - 2\sum_{j=1}^{2g-3} (-1)^j \epsilon_j,
\qquad
\p p = \p \epsilon^{\prime} + \p \epsilon^{\prime \prime} + \p \epsilon^{\prime \prime \prime}+ \sum_{j=1}^{2g-3} \p \epsilon_j.
\]
and
\[
\alpha_{g+1}=\alpha_1 + 2\epsilon^{\prime \prime \prime}+2\sum_{j=1}^{2g-3} (-1)^j \epsilon_j,
\qquad
\p p_{g+1}=\p p_1+ \p \epsilon^{\prime \prime \prime} + \sum_{j=1}^{2g-3} \p \epsilon_j.
\]
Setting $d = \epsilon^{\prime} -  \epsilon^{\prime \prime} - \epsilon^{\prime \prime \prime} - \sum_{j=1}^{2g-3} (-1)^j \epsilon_j$ gives $(\half k r,\p p)=(2d, \p d)$. If $r \equiv 1 \mod 2$ or $r \equiv 2 \mod 4$, then $\frac{\half r}{2}$ is odd and we conclude that this colouring is of degree $(d, \p d) \in \FR$ for some $d \in \Z$. If $r \equiv 0 \mod 8$, then $\frac{r}{2}$ is even so that $d$ is even. It is immediate that the above construction recovers each element of $\mathfrak{C}$.
\end{proof}

\begin{Cor}
\label{cor:VerlindeTotalDim}
Assume that $g \geq 2$. When $s=0$, the total dimension of $\TQFT(\CS)$ is
\[
\dim_{\C} \TQFT(\CS)
% =
% \begin{cases}
% 2^{2g-2}r^{3g-3} & \mbox{if } r \equiv 1 \mod 2, \\
% \frac{r^{3g-3}}{2^{g-1}} & \mbox{if } r \equiv 2 \mod 4, \\
% \frac{r^{3g-3}}{2^{2g-3}} & \mbox{if } r \equiv 0 \mod 8.
% \end{cases}
=
r^{3g-3} \cdot \begin{cases}
2^{2g-2} & \mbox{if } r \equiv 1 \mod 2, \\
\frac{1}{2^{g-1}} & \mbox{if } r \equiv 2 \mod 4, \\
\frac{1}{2^{2g-3}} & \mbox{if } r \equiv 0 \mod 8.
\end{cases}
\]
\end{Cor}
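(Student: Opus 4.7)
The approach parallels that of Corollary \ref{cor:VerlindeEuler}, replacing the $\p \lambda \to 0$ limit (which produces the Euler characteristic) by a limit $\p \lambda \to \p \lambda_0$ chosen to extract the total dimension. The critical ingredient which makes the total dimension accessible from the Verlinde formula is the support constraint of Theorem \ref{thm:genusgStateSpace}.

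First, I would invoke Theorem \ref{thm:genusgStateSpace} to restrict the $\FR$-support of $\TQFT(\CS)$. When $r \equiv 1 \mod 2$ or $r \equiv 2 \mod 4$, this support lies in the diagonal $\{(d, \p d) \mid d \in \Z\} \subset \FR$, so the $y = 1$ specialization of the graded dimension reads
\[
\dim_{(x,1)} \TQFT(\CS) = \sum_{d \in \Z} \dim_{\C} \TQFT_{(d, \p d)}(\CS) \cdot (-x)^d,
\]
and setting $x = -1$ recovers the total dimension. When $r \equiv 0 \mod 8$, the support is contained in $\FR$-parity $\p 0$, so the signs $(-1)^{\p p}$ trivialize on the support and the total dimension coincides with the Euler characteristic with respect to $\Ztwo \subset \FR$, already computed to be $r^{3g-3}/2^{2g-3}$ in Corollary \ref{cor:VerlindeEuler}. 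This settles the $r \equiv 0 \mod 8$ case.

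Second, for the remaining cases, Theorem \ref{thm:verlinde} realizes the $x = -1$ specialization as a value of $\TQFT(\CS \times S^1_{\p \lambda})$. The equation $q^{\half r \p \lambda_0} = -1$ is solved by $\p \lambda_0 = 1/4$ when $r$ is odd (since $\half = 2$) and by $\p \lambda_0 = 1/2$ when $r \equiv 2 \mod 4$ (since $\half = 1$). Continuity of both sides of the Verlinde formula then yields
\[
\dim_{\C} \TQFT(\CS) = \lim_{\p \lambda \to \p \lambda_0} \TQFT(\CS \times S^1_{\p \lambda}).
\]

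Third, I would evaluate this limit from equation \eqref{eq:verlindeNoPoints}. At $\p \lambda = \p \lambda_0$ the common numerator $q^{\red{r}\lambda} - q^{-\red{r}\lambda}$ vanishes, since $q^{\red{r}\lambda_0} = \pm 1$. A short congruence analysis in $r \mod 8$ shows that the denominator $q^{\lambda + k} + q^{-\lambda - k}$ vanishes simultaneously at $\p \lambda_0$ for a unique index $k_0 \in I_r$; all other summands tend to zero. A direct L'H\^{o}pital computation at $k_0$ yields the limiting ratio $\pm 2ir$ when $r$ is odd and $\pm ir$ when $r \equiv 2 \mod 4$. Raising to the $(2g-2)$-th power produces $(-1)^{g-1} (2r)^{2g-2}$ and $(-1)^{g-1} r^{2g-2}$ respectively; multiplying by $\zeta^{g-1}$ from Theorem \ref{thm:relMod} (equal to $(-r)^{g-1}$ for $r$ odd and $(-r/2)^{g-1}$ for $r \equiv 2 \mod 4$) then produces the stated values $r^{3g-3} \cdot 2^{2g-2}$ and $r^{3g-3}/2^{g-1}$.

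The principal difficulty is the case-by-case identification of the singular index $k_0 \in I_r$ and the careful tracking of signs and roots of unity in the L'H\^{o}pital computation; both $I_r$ and $\zeta$ depend on $r \mod 8$, so the analysis splits into sub-cases. These steps parallel the corresponding analysis in the proof of Corollary \ref{cor:VerlindeEuler}, with the only new ingredient being the shift $\p \lambda_0 \neq 0$.
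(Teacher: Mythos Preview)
Your proposal is correct and follows essentially the same route as the paper: invoke the support constraint of Theorem \ref{thm:genusgStateSpace} to reduce $r\equiv 0\bmod 8$ to the Euler characteristic and, in the remaining cases, specialize the Verlinde formula at $\p\lambda_0=\frac{1}{2\half}$ (your $\frac14$ and $\frac12$), locate the unique singular index in $I_r$, and apply l'H\^opital. In fact you carry out the l'H\^opital limit and the final multiplication by $\zeta^{g-1}$ more explicitly than the paper, which simply asserts the outcome; the paper, on the other hand, records the explicit index $n_*$ case by case in $r\bmod 8$, which you only claim exists.
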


\begin{proof}
When $r \equiv 0 \mod 8$, Theorem \ref{thm:genusgStateSpace} implies that $\TQFT(\CS)$ has even parity, whence $\dim_{\C} \TQFT(\CS)$ is equal to the Euler characteristic computed in Corollary \ref{cor:VerlindeEuler}.

When $r \equiv 1 \mod 2$ or $r \equiv 2 \mod 4$, Theorem \ref{thm:genusgStateSpace} implies that the generating function of graded dimensions simplifies to
\[
\dim_{(x,y)} \TQFT(\CS) = \sum_{d\in \Z} (-1)^{\p d} \dim_{\C} \TQFT_{(d,\p d)}(\CS) x^d y^{\p d}.
\]
Since $\lim_{\lambda \rightarrow \frac{1}{2 \half}} q^{\half \lambda r} = -1$, Theorem \ref{thm:verlinde} and equation \eqref{eq:verlindeNoPoints} lead to the equality
\begin{equation*}
\label{ex:VerlindeTotalDim}
\dim_{\C} \TQFT(\CS)
% =
% \lim_{\lambda \rightarrow \frac{1}{2 \half}} \TQFT(\CS \times S^1_{\p \lambda})
=
\lim_{\lambda \rightarrow \frac{1}{2 \half}} \zeta^{g-1}\sum_{k \in I_r} \left(\frac{q^{\red{r}\lambda}-q^{-\red{r}\lambda}}{q^{\lambda+k}+q^{-\lambda-k}} \right)^{2g-2}.
\end{equation*}
We have $\lim_{\lambda \rightarrow \frac{1}{2 \half}} (q^{\red{r}\lambda}-q^{-\red{r}\lambda}) = 0$. Writing $k = -\red{r}+1+2n$ and noting that $q^{\red{r}}=1$, we see that $\lim_{\lambda \rightarrow \frac{1}{2 \half}} (q^{\lambda+1+2n}+q^{-\lambda-1-2n}) =0$ if and only if $q^{4n+ 2+ \frac{1}{\half}}=-1$. The latter holds if and only if $4n+2+\frac{1}{\half} \equiv \frac{r}{2} \mod r$. The unique element $-\red{r}+1+2n_* \in I_r$ which satisfies the latter equation has
\[
n_*
=
\begin{cases}
\frac{5r-5}{8} & \mbox{if } r \equiv 1 \mod 8,\\
\frac{3r+2}{8} & \mbox{if } r \equiv 2 \mod 8,\\
\frac{7r-5}{8} & \mbox{if } r \equiv 3 \mod 8,\\
\frac{r-5}{8} & \mbox{if } r \equiv 5 \mod 8,\\
\frac{r-6}{8} & \mbox{if } r \equiv 6 \mod 8,\\
\frac{3r-5}{8} & \mbox{if } r \equiv 7 \mod 8.
\end{cases}
\]
An application of l'H\^{o}pital's rule now gives the claimed total dimension.
\end{proof}

\section{\texorpdfstring{Comparison between $N_r^{\osp}$ and $\Zhat^{\osp}$-invariants}{Comparison between N_r^{osp(1|2)} and Zhat^{osp(1|2)}-invariants}}
\label{sec:Zhat}

The goal of this section is to prove Theorems \ref{thm:sltwoOspZhatIntro} and \ref{thm:cgpVsZhatIntro} from the Introduction.

\subsection{Plumbed \texorpdfstring{$3$}{3}-manifolds}
\label{sec:plumbed3Mfld}

Let $\Gamma=(\Gamma,f)$ be a plumbing graph, that is, $\Gamma$ is a tree with (finite) sets $V(\Gamma)$ and $E(\Gamma)$ of vertices and edges, respectively, and vertex weighting $f: V(\Gamma) \rightarrow \Z$. We view $E(\Gamma)$ as a collection of unordered pairs of distinct elements of $V(\Gamma)$. Let $B$ be the $\vert V(\Gamma) \vert \times \vert V(\Gamma)\vert$-matrix with entries
\[
B_{v v^{\prime}}
=
\begin{cases}
1 & \mbox{if } \{v, v^{\prime}\} \in E(\Gamma), \\
f_{v} & \mbox{if } v=v^{\prime}, \\
0 & \mbox{else}.
\end{cases}
\]
We assume that $B$ is invertible. Let $b_+$ (resp. $b_-$) be the number of positive (resp. negative) eigenvalues of $B$ and $\sigma=b_+ - b_-$ the signature of $B$. Note that $b_+ + b_- = \vert V(\Gamma) \vert$. The number of edges incident to a vertex $v$ is its degree $\deg(v)$. The plumbing graph $(\Gamma,f)$ is called \emph{weakly negative definite} if the restriction of $B^{-1}$ to vertices of degree greater than two is negative definite \cite[\S 4.3]{gukov2021b}.

Let $L \subset S^3$ be the framed link with an unknot component for each vertex $v \in V(\Gamma)$ with framing $f_v$ and with distinct components $v$ and $v^{\prime}$ Hopf linked whenever $B_{v v^{\prime}}=1$. The linking matrix of $L$ is therefore $B$. Let $M$ be the closed oriented $3$-manifold obtained by integral surgery along $L$.

\subsection{\texorpdfstring{$\Zhat$-invariants for $\osp$}{Zhat-invariants for osp(1|2)}}

Let $\Gamma$ be a plumbing graph. Define formal power series
\begin{equation}
\label{eq:Fpm}
F^{\pm}(\{x_v\}_{v \in V(\Gamma)})
=
\prod_{v \in V(\Gamma)} \left( x_v \pm \frac{1}{x_v} \right)^{2 - \deg v} \in \C\pser{\{x_v, x_v^{-1}\}_{v \in V(\Gamma)}}.
\end{equation}

\begin{Lem}
\label{lem:FPlusVsMin}
There is an equality $F^-(\{\I x_v\}_{v \in V(\Gamma)}) = - F^+(\{x_v\}_{v \in V(\Gamma)})$.
\end{Lem}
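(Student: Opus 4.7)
The plan is a direct substitution followed by an exponent count. First I would compute $(\I x_v)^{-1} = -\I x_v^{-1}$, so that each factor transforms as
\[
\I x_v - (\I x_v)^{-1} = \I x_v + \I x_v^{-1} = \I(x_v + x_v^{-1}).
\]
Substituting into the definition \eqref{eq:Fpm} of $F^{-}$ and pulling out the scalar from each factor gives
\[
F^{-}(\{\I x_v\}_{v \in V(\Gamma)})
=
\left(\prod_{v \in V(\Gamma)} \I^{\,2-\deg v}\right) \cdot \prod_{v \in V(\Gamma)} (x_v + x_v^{-1})^{2-\deg v}
=
\I^{\sum_v (2-\deg v)} \cdot F^{+}(\{x_v\}_{v \in V(\Gamma)}).
\]

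The remaining task is to evaluate $\sum_{v \in V(\Gamma)} (2 - \deg v)$. Since $\Gamma$ is a tree, $|E(\Gamma)| = |V(\Gamma)| - 1$, and the handshake identity gives $\sum_v \deg v = 2|E(\Gamma)| = 2|V(\Gamma)| - 2$. Hence
\[
\sum_{v \in V(\Gamma)} (2 - \deg v) = 2|V(\Gamma)| - (2|V(\Gamma)| - 2) = 2,
\]
so the prefactor is $\I^2 = -1$, yielding the claimed identity. No real obstacle is expected; the only thing to be careful about is the sign in $(\I x_v)^{-1} = -\I x_v^{-1}$ and the use of the tree hypothesis (implicit in the definition of a plumbing graph in Section \ref{sec:plumbed3Mfld}) to pin down the exponent sum.
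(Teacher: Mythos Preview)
Your proof is correct and is essentially identical to the paper's argument: both substitute $\I x_v$ into $F^-$, factor out $\I^{2-\deg v}$ from each term to obtain $\I^{2|V(\Gamma)| - \sum_v \deg v} F^+$, and then use that $\Gamma$ is a tree to conclude the exponent equals $2$. Your version simply spells out the handshake identity and the sign in $(\I x_v)^{-1}$ more explicitly than the paper does.
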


\begin{proof}
We have
\[
F^-(\{\I x_v\}_{v \in V(\Gamma)})
=
\I^{2 \vert V(\Gamma) \vert - \sum_v \deg v} F^+(\{x_v\}_{v \in V(\Gamma)}).
\]
Now use that, since $\Gamma$ is a tree, $2 \vert V(\Gamma) \vert - \sum_v \deg v = 2$.
\end{proof}

Define complex numbers $\{F_l^{\pm}\}_{l \in \Z^{V(\Gamma)}}$ through the equality
\[
F^{\pm}(\{x_v\}_{v \in V(\Gamma)})
=
\sum_{l \in \Z^{V(\Gamma)}} F^{\pm}_l \prod_{v \in V(\Gamma)} x_v^{l_v}.
\]
It is immediate that $F^+_l = 0$ unless $l \equiv \deg v \mod 2$, and similarly for $F^-_l$. Moreover, Lemma \ref{lem:FPlusVsMin} gives
\begin{equation}
\label{eq:FpmCoeffs}
F^+_l
=
- \I^{l^t \colOne} F^-_l,
\qquad
l \in \Z^{V(\Gamma)}
.\end{equation}
where $\colOne$ denotes the column vector $(1, \dots, 1)\in \Z^{V(\Gamma)}$.

Let $\qq$ be an indeterminate.

\begin{Def}[{\cite[\S 4.2]{chauhan2023}}]
\label{def:ospZhat}
Let $\Gamma$ be a weakly negative definite plumbing graph with linking matrix $B$ and associated closed oriented $3$-manifold $M$. The \emph{$\Zhat^{\osp}$-invariant of $M$ with $\Spinc$ structure $\mathfrak{s}$} is
\begin{equation}
\label{eq:ospZhat}
\Zhat^{\osp}_{\mathfrak{s}} (M;\qq)= \qq^{\frac{3 \sigma- \tr B}{4}} \sum_{\substack{l \in \Z^{V(\Gamma)} \\ l \equiv \mathfrak{s} \mod 2 B \Z^{V(\Gamma)}}} F^+_l \qq^{- \frac{1}{4}l^t B^{-1} l} \in \qq^{\Delta^{\osp}_{\mathfrak{s}}} \Q \pser{\qq}[\qq^{-1}]
\end{equation}
for some scalar $\Delta^{\osp}_{\mathfrak{s}} \in \Q$.
\end{Def}

Keeping the notation of Definition \ref{def:ospZhat}, we recall from \cite[Appendix A]{gukov2017} that the $\Zhat$-invariant for $\sltwo$ is
\begin{equation}
\label{eq:sltwoZhat}
\Zhat^{\sltwo}_{\mathfrak{s}} (M;\qq)= (-1)^{b_+} \qq^{\frac{3 \sigma- \tr B}{4}} \sum_{\substack{l \in \Z^{V(\Gamma)} \\ l \equiv \mathfrak{s} \mod 2 B \Z^{V(\Gamma)}}} F^-_l \qq^{- \frac{1}{4}l^t B^{-1} l} \in \qq^{\Delta^{\sltwo}_{\mathfrak{s}}} \Q \pser{\qq}[\qq^{-1}]
\end{equation}
for some $\Delta^{\sltwo}_{\mathfrak{s}} \in \Q$.

\begin{Rem}
\begin{enumerate}
\item Definition \ref{def:ospZhat} generalizes the $\Zhat^{\osp}$-invariants of \cite[Eqn. (79)]{chauhan2023} from negative definite to weakly negative definite plumbing graphs. The additional overall factor of $2^{-\vert V(\Gamma) \vert}$ which appears in \cite[Eqn. (79)]{chauhan2023} is absorbed into the coefficients $F_l^-$ used in this paper; compare equations \eqref{eq:Fpm} and \cite[Eqn. (80)]{chauhan2023}. In contrast to $\Zhat^{\sltwo}$, the series $\Zhat^{\osp}$ does not have a global factor of $(-1)^{b_+}$. Quite generally, $\Zhat^{\mathfrak{g}}$ for a simple (say) Lie superalgebra is expected to have a global factor of $(-1)^{\vert R_+ \vert b_+}$, where $\vert R_+ \vert$ is the number of positive roots of $\mathfrak{g}$; see \cite[Def. 2.2]{park2020} and \cite[Eqn. (2.10)]{ferrari2024}. In particular, $\vert R_+ \vert$ is equal to $1$ and $2$ for $\sltwo$ and $\osp$, respectively, explaining the difference in global signs.

\item Weak negative definiteness of $\Gamma$ ensures convergence of the $\qq$-series $\Zhat^{\sltwo}_{\mathfrak{s}}(M; \qq)$ in the unit disk $\{\qq \mid \vert \qq \vert < 1 \}$ \cite{gukov2021b}. A direct modification of this argument applies to $\Zhat^{\osp}_{\mathfrak{s}}(M; \qq)$.

\item That the $\qq$-series \eqref{eq:ospZhat} is independent of the plumbing presentation of $M$, and hence an invariant of the pair $(M,\mathfrak{s})$, can be verified by a direct modification of the calculation for $\Zhat^{\sltwo}$ given in \cite[Proposition 4.6]{gukov2021b}. For example, the global factor $(-1)^{b_+}$ appearing in equation \eqref{eq:sltwoZhat} is required to cancel a sign which appears due to the oddness of the function $z -\frac{1}{z}$ (seen as a factor of $F^-$) under the substitution $z \mapsto z^{-1}$. In the case of $\osp$ we consider instead the function $z + \frac{1}{z}$ (seen as a factor of $F^+$), which is even under the same substitution, and hence do not need to cancel a factor of $(-1)^{b_+}$.
\end{enumerate}
\end{Rem}

\subsection{\texorpdfstring{$\Zhat^{\sltwo}$ versus $\Zhat^{\osp}$}{Zhat^{sl(2)} versus Zhat^{osp(1|2)}}}

The following result is motivated by physical calculations of Chauhan and Ramadevi \cite[\S\S 4-5]{chauhan2023}.

\begin{Thm}
\label{thm:sltwoOspZhat}
For each $\Spinc$ structure $\mathfrak{s}$ on $M$, there exists a root of unity $c_{\mathfrak{s}} \in \C$ whose order is divisible by $8 \vert H_1(M;\Z)\vert$ such that
\[
\Zhat^{\osp}_{\mathfrak{s}}(M; -\qq)
=
c_{\mathfrak{s}} \Zhat_{\mathfrak{s}}^{\sltwo}(M; \qq).
\]
\end{Thm}

\begin{proof}
In the notation of Section \ref{sec:surgeryTopology}, write $\mathfrak{s} = \sigma(b,s)$ for some $b \in H_1(M;\Z)$ and $s \in \Spin(M)$. Writing the summation index $l$ in the definition of $\Zhat^{\osp}_{\mathfrak{s}}$ as $2b+B(s-\colOne) + 2 Bk$ for $k \in \Z^{V(\Gamma)}$ and using equation \eqref{eq:FpmCoeffs} to replace $F_l^+$ with $F_l^-$, we find
\begin{multline*}
\Zhat^{\osp}_{\sigma(b,s)}(M;\qq)
=
- \qq^{\frac{3 \sigma- \tr B}{4} -\frac{(2b+B(s-\colOne))^t B^{-1} (2b+B(s-\colOne))}{4}} \I^{(2b+B(s-\colOne))^t \colOne} \\ \sum_{k \in \Z^{V(\Gamma)}} (-1)^{k^t B \colOne} F^-_{l(k)} \qq^{-k^t B k - k^t(2b+B(s-\colOne))}.
\end{multline*}
The definitions give
\[
k^t B k + k^t Bs
=
2\sum_{1 \leq i<j \leq \vert V(\Gamma) \vert} B_{ij} k_i k_j + \sum_{i=1}^{\vert V(\Gamma) \vert} B_{ii} k^2_i + \sum_{i=1}^{\vert V(\Gamma) \vert}  \big(\sum_{j=1}^{\vert V(\Gamma) \vert} B_{ij} s_j \big) k_i.
\]
Since $\sum_{j=1}^{\vert V(\Gamma) \vert} B_{ij} s_j \equiv B_{ii} \mod 2$, we have $k^tB k + k^t Bs \equiv 0 \mod 2$, whence
\[
-k^t B k - k^t(2b+B(s-\colOne))
% \equiv
% k^t B k - k^t B(s-\colOne)
\equiv
k^t B\colOne
\mod 2.
\]
We conclude that
\[
\sum_{k \in \Z^{V(\Gamma)}} (-1)^{k^t B \colOne} F^-_{l(k)} \qq^{-k^t B k - k^t(2b+B(s-\colOne))}
=
\sum_{k \in \Z^{V(\Gamma)}} F^-_{l(k)} (-\qq)^{-k^t B k - k^t(2b+B(s-\colOne))}.
\]
Hence, we have
\begin{multline*}
\Zhat^{\osp}_{\sigma(b,s)}(M;\qq)
=
- \qq^{\frac{3 \sigma- \tr B}{4} -\frac{(2b+B(s-\colOne))^t B^{-1} (2b+B(s-\colOne))}{4}} \I^{(2b+B(s-\colOne))^t \colOne} \\ \sum_{k \in \Z^{V(\Gamma)}} F^-_{l(k)} (-\qq)^{-k^t B k - k^t(2b+B(s-\colOne))}.
\end{multline*}
Making the same substitution as above for $l$ in $\Zhat^{\sltwo}_{\sigma(b,s)}(M;\qq)$, we see that we may take $c_{\sigma(b,s)} = e^{2 \pi \I \tilde{c}_{\sigma(b,s)}}$, where
%\[
%c_{\sigma(b,s)}
% =
% (-1)^{1+b_+ + \delta}\I^{(2b+B(s-\colOne))^t \colOne}
%=
%e^{2 \pi \I \left(\frac{1+b_+}{2} \frac{(2b+B(s-\colOne))^t \colOne}{4} + \frac{\delta}{2} \right)}
%=
%e^{2 \pi \I \tilde{c}_{\sigma(b,s)}},
%\]
%where
\[
\tilde{c}_{\sigma(b,s)}
=
\frac{1+b_+}{2} + \frac{(2b+B(s-\colOne))^t \colOne}{4} + \frac{\delta}{2} \in \Q
\]
with
%$C_{\sigma(b,s)} = (-1)^{1+b_+}\I^{(2b+B(s-\colOne))^t \colOne}$ and
\[
\delta
=
\frac{3 \sigma- \tr B}{4} -\frac{(2b+B(s-\colOne))^t B^{-1} (2b+B(s-\colOne))}{4}.
\]
Explicitly, we have
%\[
%\tilde{c}_{\sigma(b,s)}
%=
%\frac{1+b_+}{2} +\frac{3 \sigma - \tr B}{8} - \frac{b^t B^{-1} b}{2} - \frac{b^t (s - \colOne)}{2} - \frac{(s-\colOne)^t B (s- \colOne)}{8}
%\]
\[
\tilde{c}_{\sigma(b,s)}
=
\frac{4(1+b_+) + 3 \sigma - \tr B -4 b^t (s - \colOne) - (s-\colOne)^t B (s- \colOne) }{8} - \frac{b^t B^{-1} b}{2}
\]
Since the numerator of the first term is an integer $b^t B^{-1} b \in \frac{1}{\det B} \Z$ and $\vert \det B \vert = \vert H_1(M;\Z) \vert$, we conclude that $\tilde{c}_{\sigma(b,s)} \in \frac{1}{8 \vert H_1(M;\Z) \vert}\Z$.
\end{proof}

\subsection{Relation to CGP invariants}

To compare $\Zhat$-invariants and CGP invariants, it is useful to slightly renormalize the latter, as defined by equation \eqref{eq:cgpInvt}. Fix a relative modular category $\cat$.

\begin{Def}[{\cite[\S 2.4]{costantino2014}}]
\label{def:cgpInvtOrig}
Let $L \subset S^3$ be an oriented framed link and $M$ the closed oriented $3$-manifold obtained by integral surgery along $L$. Let $\coh \in H^1(M; \Gr)$ such that $\coh(\mer_c) \in \Gr \setminus \SSS$ for each component $L_c$ of $L$. The \emph{CGP invariant} of $(M,\coh)$ is
\begin{equation}
\label{eq:cgpInvtOrig}
N_{\cat}(M,\coh)
=
\frac{1}{\Delta_+^{b_+} \Delta_-^{b_-}} F^{\prime}_{\cat}(L) \in \C,
\end{equation}
where each component $L_c$ of $L$ is coloured by the Kirby colour $\Omega_{\coh(\mer_c)}$ and $b_+$ (resp. $b_-$) is the number of positive (resp. negative) eigenvalues of the linking matrix of $L$.
\end{Def}

Take now $\cat$ to be one of the relative modular categories of Theorem \ref{thm:relMod}. We write $N_r^{\osp}(M,\coh)$ for $N_{\cat}(M,\coh)$ to emphasize the dependence on the order $r$ of $q$. Given $\lambda, \lambda^{\prime} \in \C$, set
\[
\qd(\lambda)
=
\frac{q^{\lambda}+q^{-\lambda}}{q^{\red{r}\lambda}-q^{-\red{r}\lambda}},
\qquad
S(\lambda^{\prime},\lambda) = q^{\lambda^{\prime} \lambda},
\qquad
T(\lambda)
% =
% q^{\frac{(\lambda+\red{r}-1)(\lambda-\red{r}+1)}{2}}
=
q^{\frac{\lambda^2-(\red{r}-1)^2}{2}}.
\]
By Proposition \ref{prop:ribbonCat} and equations \eqref{eq:qdimnormalized} and \eqref{eq:openHopfSimp}, we have $\langle \theta_{V_{(\lambda,\p p)}} \rangle = T(\lambda)$ and, for $\lambda$ typical,
\[
\qd(V_{(\lambda, \p p)}) = (-1)^{\p p} \qd(\lambda),
\qquad
\langle \Phi_{V_{(\lambda^{\prime}, \p p^{\prime})},V_{(\lambda, \p p)}} \rangle
=
(-1)^{\p p^{\prime}} \frac{S(\lambda, \lambda^{\prime})}{\qd(\lambda)}.
\]

Let $\Gamma$ be a weakly negative definite plumbing graph and $\coh \in H^1(M; \Gr)$. Write $\alpha_v$ for the value of $\coh$ on the homology class of the oriented meridian of the $v$\textsuperscript{th} component of $\Gamma$. For $k \in I_r^{V(\Gamma)}$, write $\alpha_k = \{\alpha_v + k_v\}_{v \in V(\Gamma)}$. With this notation, the CGP invariant \eqref{eq:cgpInvtOrig} becomes
\begin{equation}
\label{eq:plumbingCGP}
N_r^{\osp}(M,\coh)
=
\frac{1}{\Delta_+^{b_+} \Delta_-^{b_-}} \sum_{k \in I_r^{V(\Gamma)}} \prod_{v \in V(\Gamma)} \qd(\alpha_{k_v})^{2 - \deg(v)} T(\alpha_{k_v})^{f_v} \prod_{\{v_1,v_2\} \in E(\Gamma)} S(\alpha_{k_{v_1}}, \alpha_{k_{v_2}}).
\end{equation}
%Here $\alpha_{k_v} = \coh(\mathfrak{m}_v) + k_v$.

In \cite[Theorem 4.18]{costantino2023}, a regularization of the function $\Zhat^{\sltwo}$ via an additional parameter $t$ is presented so that the limit $\qq \rightarrow e^{\frac{2\pi \I}{r}}$ of particular sums involving $\Zhat^{\sltwo}_{\mathfrak{s}} (M;\qq)$ may be computed as the evaluation at $\qq = e^{\frac{2\pi \I}{r}}$ followed by the limit at $t=1$. Similar techniques should be applied for more general Lie (super)algebras, like $\osp$, but we do not pursue this. Instead, we work in the following setting.

\begin{Hyp}
\label{hyp:techAssump}
We assume that a similar regularization exists for $\Zhat^{\osp}_{\mathfrak{s}} (M;\qq)$, so that the limit in Theorem \ref{thm:cgpVsZhat} below may be computed as the evaluation at $\qq = e^{\frac{4\pi \I}{r}}$ followed by the limit $t\to 1$. 
\end{Hyp}

The goal of the remainder of the paper is to prove the following comparison result. We use the topological notation of Section \ref{sec:surgeryTopology}.

\begin{Thm}
\label{thm:cgpVsZhat}
Let $\delta \in \{\pm 1\}$ and assume that $r$ is congruent modulo $8$ to $\delta$ or $2 \delta$. Let $\Gamma$ be a weakly negative definite plumbing graph. There is an equality
\[
N_r^{\osp}(M,\coh)
=
\lim_{\qq \rightarrow e^{\frac{4\pi \I}{r}}} \sum_{\mathfrak{s} \in \Spinc(M)} c^{\osp}_{\coh,\mathfrak{s}} \Zhat^{\osp}_{\mathfrak{s}} (M;\qq),
\]
where
\[
c^{\osp}_{\coh,\sigma(b,s)}
=
\frac{e^{\pi{\I} \mu(M,s)} \tor(M,[4 \coh])}{\vert H_1(M;\Z) \vert} \sum_{a,f} e^{2 \pi \I \left(-\frac{r-\delta}{8} \lk(a,a) - \lk(a,b+f) +2 \lk(f,f)-\frac{1}{2} \coh(a) \right)}
\]
if $r \equiv \delta \mod 8$ and
\[
c^{\osp}_{\coh,\sigma(b,s)}
=
\frac{e^{-\delta \frac{\pi \I}{2} \mu(M,s)} \tor(M,[2 \coh])}{\vert H_1(M;\Z) \vert} \sum_{a,f} e^{2 \pi \I \left(-\frac{r-2\delta}{8} \lk(a,a) - \lk(a,b+\delta f) + \delta \lk(f,f) - \frac{1}{2} \coh(a) \right)}
\]
if  $r \equiv 2 \delta \mod 8$.
% \[
% % c^{\osp}_{\coh,b}
% % =
% \begin{cases}
% \frac{e^{\pi{\I} \mu(M,s)} \tor(M,[4 \coh])}{\vert H_1(M;\Z) \vert} \sum_{a,f} e^{2 \pi \I \left(-\frac{r-\delta}{8} \lk(a,a) - \lk(a,b+f) +2 \lk(f,f)-\frac{1}{2} \coh(a) \right)} & \mbox{if } r \equiv \delta \mod 8,\\
% \frac{e^{-\delta \frac{\pi \I}{2} \mu(M,s)} \tor(M,[2 \coh])}{\vert H_1(M;\Z) \vert} \sum_{a,f} e^{2 \pi \I \left(-\frac{r-2\delta}{8} \lk(a,a) - \lk(a,b+\delta f) + \delta \lk(f,f) - \frac{1}{2} \coh(a) \right)} & \mbox{if } r \equiv 2 \delta \mod 8.
% \end{cases}
% \]
Here $\sum_{a,f}$ denotes summation over $a,f \in H_1(M;\Z)$ and $\tor$ is the appropriately normalized Reidemeister torsion of $M$ (defined below).
\end{Thm}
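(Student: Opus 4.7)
The plan is to adapt the strategy used in \cite{costantino2023} for $\sltwo$ at even-order roots of unity to the $\osp$ setting, treating the odd and even ranges of $r \mod 8$ separately but by the same mechanism. At heart, the proof is an application of lattice Gauss reciprocity that transforms the Cartan-type lattice sum appearing in the surgery formula for $N_r^{\osp}(M,\coh)$ into the dual lattice sum defining $\Zhat^{\osp}$, up to a multiplicative coefficient that is calculable from the surgery data.

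My starting point is the plumbing formula \eqref{eq:plumbingCGP}. I separate its summand into three pieces: a Gaussian piece
\[
\prod_{v} T(\alpha_{k_v})^{f_v} \prod_{\{v_1,v_2\} \in E(\Gamma)} S(\alpha_{k_{v_1}}, \alpha_{k_{v_2}}) = q^{\frac{1}{2}(\alpha + k)^t B (\alpha + k)},
\]
up to a $k$-independent scalar; the numerator piece $\prod_v(q^{\alpha_{k_v}} + q^{-\alpha_{k_v}})^{2-\deg v}$ of the quantum-dimension product; and the denominator piece $\prod_v (q^{\red{r}\alpha_{k_v}} - q^{-\red{r}\alpha_{k_v}})^{\deg v - 2}$. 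Using Hypothesis \ref{hyp:techAssump} and the regularization it provides, the numerator is identified with the evaluation at $x_v = q^{\alpha_v + k_v}$ of the series $F^+$ of \eqref{eq:Fpm}, while the denominator, in the limit $\qq \to e^{4\pi \I/r}$ (so that $\qq = q^2$), yields the normalized Reidemeister torsion factor $\tor(M, [\kappa\coh])$ that appears in $c^{\osp}$; here $\kappa$ equals $4$ or $2$ according to the relation between $\red{r}$ and $r$.

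Next I expand $F^+$ as $\sum_{l \in \Z^{V(\Gamma)}} F^+_l \prod_v q^{l_v(\alpha_v + k_v)}$ and interchange the $k$- and $l$-sums. The inner sum becomes a multivariable Gauss sum of the form $\sum_{k \in I_r^{V(\Gamma)}} q^{\frac{1}{2} k^t B k + k^t (l + B \alpha)}$. By the reciprocity formula for Gauss sums attached to integral lattices, as in \cite{deloup2005}, this converts into a dual sum over $\Z^{V(\Gamma)} / \red{r} B \Z^{V(\Gamma)}$ with exponent proportional to $-\frac{1}{2\red{r}}(l + B\alpha)^t B^{-1} (l + B\alpha)$, multiplied by a prefactor involving $|\det B|^{-1/2}$ and a signature phase $e^{\pi \I \sigma/4}$. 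Invoking \eqref{eq:RokhlinMod4}, the signature phase combines with the $s$-dependent contribution of the dual lattice to produce the Rokhlin-invariant phase of $c^{\osp}$. The residues are then reindexed by $\Spinc(M)$ via the surjection $\sigma(b,s) = 2b + B s$, and the comparison with Definition \ref{def:ospZhat} at $\qq = q^2$ reproduces the claimed identity, with the inner sum $\sum_{a,f}$ over $H_1(M; \Z)$ emerging as the intrinsic quadratic Gauss sum over the remaining degrees of freedom in $\Spin$ structures and homology torsors.

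The main obstacle is the required case analysis modulo $8$. By Lemma \ref{lem:minVanish}, $\red{r}$ equals $2r$ when $r$ is odd and $r$ when $r \equiv 2 \mod 4$; this changes both the period of the $k$-sum and the parity structure of the relevant reciprocity identity. Combined with the extra twist $\I^{l^t \colOne}$ relating $F^+$ to $F^-$ via \eqref{eq:FpmCoeffs}, this accounts for the dichotomy between the $r \equiv \delta$ and $r \equiv 2\delta$ branches: in particular, it explains why $\tor(M, [4\coh])$ and $e^{\pi \I \mu(M,s)}$ appear in the first case, while $\tor(M, [2\coh])$ and $e^{-\delta \pi \I \mu(M,s)/2}$ appear in the second. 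The subtlest bookkeeping concerns the normalization $\Delta_+^{b_+} \Delta_-^{b_-}$ from Definition \ref{def:cgpInvtOrig} and Propositions \ref{prop:redMod}, \ref{prop:redModZeroModEight}, whose combination with the signature prefactor of Gauss reciprocity must yield exactly the coefficient stated in the theorem; this is the point where the calculations differ most visibly from those of \cite{costantino2023}.
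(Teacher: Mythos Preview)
Your overall strategy matches the paper's: factor the surgery sum as $\mathcal{A}\mathcal{B}\mathcal{C}$ with $\mathcal{A}$ the normalization, $\mathcal{B}$ the torsion, and $\mathcal{C}$ the lattice Gauss sum, expand $F^+$ as $\sum_l F^+_l x^l$, and apply reciprocity. However, there is a genuine gap in how the double sum $\sum_{a,f}$ over $H_1(M;\Z)$ arises, and the dual lattice you name is incorrect.

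The paper applies Proposition~\ref{prop:GaussRec} \emph{twice}, not once. The first application converts the sum over $k \in I_r^{V(\Gamma)}$, equivalently over $(\Z/\parity{r}\Z)^{V(\Gamma)}$ with bilinear form $\frac{2}{\even}B$, into a dual sum over $\tilde{a} \in \Z^{V(\Gamma)}/\frac{4}{\even}B\Z^{V(\Gamma)}$, not over $\Z^{V(\Gamma)}/\red{r}B\Z^{V(\Gamma)}$ as you write. One then splits $\tilde{a} = BA + a$ with $a \in \Z^{V(\Gamma)}/B\Z^{V(\Gamma)} \simeq H_1(M;\Z)$ and $A \in (\Z/\frac{4}{\even}\Z)^{V(\Gamma)}$. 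The residual $A$-sum $\mathcal{C}''_l$ is itself a Gauss sum, with quadratic part $e^{-\frac{\pi\I r}{4}A^tBA}$, and a \emph{second} application of reciprocity to it is what produces the sum over $f \in H_1(M;\Z)$ together with the term $e^{\frac{\delta\pi\I}{2}s^tBs}$ (or $e^{\delta\pi\I s^tBs}$) that combines with the signature phase via \eqref{eq:RokhlinMod4} to yield the Rokhlin contribution. Your sketch collapses this into a single reciprocity step and leaves the emergence of $f$ as a vague ``remaining degrees of freedom'', which does not account for the actual mechanism. It is precisely at this second reciprocity that the case analysis modulo $8$ bites: the effective coefficient of $B$ in the $A$-sum is $-\frac{r}{8} \mod 1$, so only when $r \equiv \pm 1, \pm 2 \mod 8$ does the dual sum land back on a quotient by $B\Z^{V(\Gamma)}$. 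For $r \equiv \pm 3 \mod 8$ the second reciprocity instead produces a sum over $\Z^{V(\Gamma)}/3B\Z^{V(\Gamma)}$, which has no topological interpretation; a single-reciprocity argument cannot see this obstruction.
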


Before proving Theorem \ref{thm:cgpVsZhat}, we record a number of comments.

\begin{Rem}
\label{rem:ZhatVsCGP}
\begin{enumerate}
\item The radial limit $\qq \rightarrow e^{\frac{4\pi \I}{r}}$ appearing in Theorem \ref{thm:cgpVsZhat} should be interpreted as in the case of $\sltwo$ \cite[Footnote 4]{costantino2023}. The subtlety is that the function $\Zhat^{\osp}_{\mathfrak{s}} (M;\qq)$ is multivalued because of the overall factor of $\qq^{\Delta^{\osp}_{\mathfrak{s}}}$.

\item From the perspective of Theorem \ref{thm:cgpVsZhat}, Hypothesis \ref{hyp:techAssump} allows the limit $\qq \rightarrow e^{\frac{4\pi \I}{r}}$ to be computed as an evaluation at $\qq = e^{\frac{4\pi \I}{r}}$. This is the form in which Theorem \ref{thm:cgpVsZhat} is proved below. The $\sltwo$-analogue of Hypothesis \ref{hyp:techAssump} is verified for Y-shaped plumbing graphs in \cite[Appendix D]{costantino2023}.

\item The analogue of Theorem \ref{thm:cgpVsZhat} for $\sltwo$ is \cite[Theorem 4.17]{costantino2023}, where the cases $r \equiv 2 \delta \mod 8$ and $r \equiv 4 \mod 8$ are treated. In the former case, the coefficients $c^{\sltwo}_{\coh,\sigma(b,s)}$ and $c^{\osp}_{\coh,\sigma(b,s)}$ are very closely related. Relabelling $f$ with $-f$, a summand of $\sum_{a,f}$ in $c^{\osp}_{\coh,\sigma(b,s)}$ becomes
\[
e^{2 \pi \I \left(-\frac{r-2\delta}{8} \lk(a,a) + \delta \lk(a,f - \delta b) - \frac{1}{2} \coh(a)+ \delta \lk(f,f)\right)},
\]
which is the corresponding summand in $c^{\sltwo}_{\coh,\sigma(b,s)}$. The overall prefactors differ by the replacement of $\tor(M,[2 \coh])$ in $c^{\osp}_{\coh,\sigma(b,s)}$ with $-\tor(M,[\coh])$ in $c^{\sltwo}_{\coh,\sigma(b,s)}$. It follows that we have
\[
-\tor(M,[\coh]) c^{\osp}_{\coh,\sigma(b,s)}
=
\tor(M,[2\coh]) c^{\sltwo}_{\coh,\sigma(b,s)}.
\]

\item The invariants $N_r^{\sltwo}$ when $r \equiv 1 \mod 2$ were recently defined by Detcherry \cite{detcherry2025}. Based on the calculations of Section \ref{sec:pm3Mod8}, where it is explained why the computation involved in the proof of Theorem \ref{thm:cgpVsZhat} does not extend to the case $r \equiv \pm 3 \mod 8$, we expect $N_r^{\sltwo}$ to be closely related to $\Zhat^{\sltwo}$ when $r \equiv \pm 1 \mod 8$ but not when $r \equiv \pm 3 \mod 8$.

\item When $r \equiv 0 \mod 8$ the computation from the proof of Theorem \ref{thm:cgpVsZhat} again fails, although for a different reason than for $r \equiv \pm 3 \mod 8$; see Section \ref{sec:0Mod8}. The reason is at least partially due to the $\C \slash \Z$-grading of $\cat$ of Proposition \ref{prop:redModZeroModEight}, as opposed to the $\C \slash 2 \Z$-grading used when $r \not\equiv 0 \mod 8$. The former grading leads to non-topological terms in the putative relation between $N_r^{\osp}$ and $\Zhat^{\osp}$. 

\item When $r \equiv 0 \mod 8$ the invariant $N_r^{\sltwo}$ is not defined since the category of $\Uqsltwo$-weight modules with its standard $\C \slash 2 \Z$-grading and free realization is degenerate. Instead, there exist spin-refined CGP invariants \cite{blanchet2014} which, as shown in \cite[\S 4.3]{costantino2023}, are related to $\Zhat^{\sltwo}$-invariants. The spin-refinement uses the same $\C \slash 2 \Z$-grading with a modified free realization. Motivated by this, we expect Theorem \ref{thm:cgpVsZhat} to extend to the case $r \equiv 0 \mod 8$ when $N_r^{\osp}$ is replaced with its (not yet defined) spin-refinement which results from the category $\cat$ with its natural $\C \slash 2 \Z$-grading, as opposed to the $\C \slash \Z$-grading of Proposition \ref{prop:redModZeroModEight}.
\end{enumerate}
\end{Rem}

The key tool used to prove Theorem \ref{thm:cgpVsZhat}---as in the case for $\sltwo$---is the following reciprocity of quadratic Gauss sums.

\begin{Prop}[{\cite{jeffrey1992,deloup2007}}]
\label{prop:GaussRec}
Keeping the notation of Section \ref{sec:plumbed3Mfld}, there is an equality
\begin{equation*}
\label{eq:GaussRec}
\sum_{n \in \Z^{V(\Gamma)} \slash r \Z^{V(\Gamma)}} e^{\frac{2 \pi \I}{r} (n^t B n + p^t n)}
=
\frac{e^{\frac{\pi \I \sigma}{4}} (\frac{r}{2})^{\frac{\vert V(\Gamma) \vert}{2}}}{\vert \det B \vert^{\frac{1}{2}}} \sum_{\tilde{a} \in \Z^{V(\Gamma)} \slash 2B \Z^{V(\Gamma)}} e^{-\frac{\pi \I r}{2} \left(\tilde{a} + \frac{p}{r} \right)^t B^{-1} \left(\tilde{a} + \frac{p}{r} \right)}.
\end{equation*}
\end{Prop}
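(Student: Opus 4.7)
The identity is a multidimensional quadratic Gauss sum reciprocity formula. It is established in the cited references \cite{jeffrey1992, deloup2007}, and I would defer to them for the complete argument. The strategy is Poisson summation applied to a regularized theta series, proceeding as follows.

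First I would introduce the Gaussian-damped infinite sum
\[
\Theta_\epsilon(p) = \sum_{n \in \Z^{V(\Gamma)}} e^{\frac{2 \pi \I}{r}(n^t B n + p^t n) - \epsilon \Vert n \Vert^2}, \qquad \epsilon > 0,
\]
and partition $\Z^{V(\Gamma)}$ into $r^{\vert V(\Gamma) \vert}$ cosets of $r\Z^{V(\Gamma)}$. Since the phase $\frac{2\pi \I}{r}(n^t B n + p^t n)$ is invariant under the replacement $n \mapsto n + r k$ for $k \in \Z^{V(\Gamma)}$, the sum over each coset reduces in the limit $\epsilon \to 0^+$ to a Gaussian integral of the damping factor, producing a universal prefactor that depends only on $\epsilon$ and $\vert V(\Gamma) \vert$. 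Dividing through by this prefactor and taking $\epsilon \to 0^+$ recovers the LHS of the desired identity.

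Next I would apply Poisson summation directly to $\Theta_\epsilon(p)$. The Fourier transform of $e^{\frac{2\pi \I}{r} x^t B x - \epsilon \Vert x \Vert^2}$ is computed by diagonalizing $B$ over $\R$ and evaluating each eigen-component as a Fresnel integral; in the limit $\epsilon \to 0^+$ one obtains a Gaussian in the dual variable with quadratic form $-\frac{r}{2} B^{-1}$, multiplied by the prefactor $(r/2)^{\vert V(\Gamma) \vert /2} \vert \det B \vert^{-1/2}$. The signature factor $e^{\pi \I \sigma / 4}$ arises from assembling the branches of the classical Fresnel integrals $\int_{\R} e^{\pm \pi \I x^2}\,dx = e^{\pm \pi \I /4}$ over the $b_+$- and $b_-$-eigenspaces of $B$. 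Partitioning the resulting dual lattice sum into cosets of $2B \Z^{V(\Gamma)}$ inside $\Z^{V(\Gamma)}$ then yields the RHS, with the shift by $p/r$ inside the inverted quadratic form emerging from the Fourier transform of the linear phase $e^{2\pi \I p^t x/r}$.

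The principal obstacle, and the delicate point in the cited references, is the careful analytic continuation of the Gaussian integrals across the indefinite signature of $B$, together with the consistent bookkeeping of signs, square-root branches, and the normalization factors $r^{-\vert V(\Gamma) \vert}$ versus $\vert \det B \vert^{-1/2}$ on each eigenspace. Because the authors present Proposition \ref{prop:GaussRec} as a direct citation rather than reproving it, my plan is to follow suit and invoke \cite{jeffrey1992, deloup2007}, rather than reproducing the Fresnel analysis in full.
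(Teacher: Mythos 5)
The paper does not prove Proposition \ref{prop:GaussRec} itself but simply cites \cite{jeffrey1992,deloup2007}, and your proposal does the same while additionally giving a correct high-level outline of the standard Poisson-summation/regularized-theta argument those references employ (Gaussian damping to make the lattice sum converge, coset averaging to recover the finite Gauss sum, Fresnel integrals along the eigendirections of $B$ to produce $(r/2)^{\vert V(\Gamma)\vert/2}\vert\det B\vert^{-1/2}e^{\pi\I\sigma/4}$, and the shift by $p/r$ from the linear phase). This matches the paper's treatment; the only minor imprecision in your sketch is the claim that the prefactor from the coset Riemann sums depends only on $\epsilon$ and $\vert V(\Gamma)\vert$, when in fact it also carries a factor of $r^{-\vert V(\Gamma)\vert}$, but this does not affect the soundness of the outline.
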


\subsection{Factorization when \texorpdfstring{$r \not\equiv 0 \mod 8$}{r ≠ 0 mod 8}}
\label{sec:factrNequiv0}

We begin with the case $r \not\equiv 0 \mod 8$. Using equation \eqref{eq:plumbingCGP}, we see that the CGP invariant factorizes as
\begin{equation}
\label{eq:cgpFactorization}
N_r^{\osp}(M,\coh)=\mathcal{A} \mathcal{B} \mathcal{C},
\end{equation}
where
\[
\mathcal{A}
=
\frac{q^{-\frac{1}{2}(\red{r}-1)^2 \tr B}}{\Delta_+^{b_+} \Delta_-^{b_-}},
\]
\[
\mathcal{B}
=
F^-\left( \{e^{2 \pi \I \frac{\red{r}}{r}\alpha_v} \}_{v \in V(\Gamma)} \right)^{-1}
\]
and
\[
\mathcal{C}
=
\sum_{k \in I_r^{V(\Gamma)}} F^+(\{q^{\alpha_{k_v}}\}_{v \in V(\Gamma)}) q^{\frac{1}{2}(\alpha +k)^t B (\alpha + k)}.
\]
The existence of the factorization \eqref{eq:cgpFactorization} stems from the fact that $\frac{\red{r}}{r} =2$ (resp. $\frac{\red{r}}{r}=1$) when $r \equiv 1 \mod 2$ (resp. $r \equiv 2 \mod 4$), so that the exponents $2 \pi \I \frac{\red{r}}{r}\alpha_v$ appearing in the arguments of $F^-$ are invariant under the shifts in $\alpha_v$ by $2$ which occur in the indexing set $I_r$ of Kirby colours. When $r \equiv 0 \mod 8$---discussed in Section \ref{sec:0Mod8} below---there is invariance only up to a sign, leading to a mild correction in $\mathcal{C}$.

Fix $l \in \Z^{V(\Gamma)}$. Expanding the function $F^+$, the contribution of the monomial $\prod_{v \in V(\Gamma)} x_v^{l_v}$ to $\mathcal{C}$ is
\begin{eqnarray*}
\mathcal{C}_l
&=&
\sum_{k \in I_r^{V(\Gamma)}}
q^{\frac{1}{2}(\alpha +k)^t B (\alpha + k) + l^t (\alpha+k)}.
\end{eqnarray*}
Defining $\tilde{\alpha}=\alpha -(\red{r}-1) \colOne$ and inserting the explicit definitions of $q$ and $I_r$ gives
\[
\mathcal{C}_l
=
% \sum_{n \in \Z^{V(\Gamma)} \slash \parity{r} \Z^{V(\Gamma)}} e^{\frac{\pi \I}{r}(\alpha - \red{r}+1 +2n)^t B (\alpha - \red{r}+1 + 2n)+\frac{2 \pi \I}{r} l^t (\alpha-\red{r}+1+2n)} \\
% &=&
% \sum_{n \in \Z^{V(\Gamma)} \slash \parity{r} \Z^{V(\Gamma)}} e^{\frac{4 \pi \I}{r} n^t B n+ \frac{\pi \I}{r}\tilde{\alpha}^t B \tilde{\alpha} + \frac{4\pi \I}{r}(B\tilde{\alpha})^t n + \frac{2 \pi \I}{r} l^t \tilde{\alpha}+\frac{4 \pi \I}{r} l^t n} \\
% &=&
e^{\frac{\pi \I}{r}\tilde{\alpha}^t B \tilde{\alpha} + \frac{2 \pi \I}{r} l^t \tilde{\alpha}} \sum_{n \in \Z^{V(\Gamma)} \slash \parity{r} \Z^{V(\Gamma)}} e^{\frac{2 \pi \I}{\parity{r}} n^t (\frac{2}{\even}B) n + \frac{2\pi \I}{\parity{r}}\frac{2}{\even}(l+B\tilde{\alpha})^t n}.
\]
Recall that $\even = \gcd(2,r)$ and $\parity{r} = \frac{r}{\even}$. Applying Proposition \ref{prop:GaussRec}, we obtain
\[
\mathcal{C}_l=e^{- \frac{\pi \I}{r} l^t B^{-1} l}\frac{e^{\frac{\pi \I \sigma}{4}}(\frac{r}{4})^{\frac{\vert V(\Gamma) \vert}{2}}}{\vert \det B \vert^{\frac{1}{2}}} \mathcal{C}^{\prime}_l,
\]
where we have introduced
\[
\mathcal{C}^{\prime}_l
=
\sum_{\tilde{a} \in \Z^{V(\Gamma)} \slash \frac{4}{\even}B \Z^{V(\Gamma)}} e^{-\frac{\pi \I r}{4} \tilde{a}^t B^{-1} \tilde{a} - \pi \I \tilde{a}^t B^{-1}(l + B \tilde{\alpha})}.
\]
Setting $\tilde{a} = BA + a$ for $A \in \Z^{V(\Gamma)} \slash \frac{4}{\even} \Z^{V(\Gamma)}$ and $a \in \Z^{V(\Gamma)} \slash B \Z^{V(\Gamma)}$, we have
\[
\mathcal{C}^{\prime}_l
=
\sum_{\substack{a \in \Z^{V(\Gamma)} \slash B \Z^{V(\Gamma)} \\ A \in \Z^{V(\Gamma)} \slash \frac{4}{\even} \Z^{V(\Gamma)}}} e^{-\frac{\pi \I r}{4} A^t B A- \frac{\pi \I r}{4} a^t B^{-1} a - \frac{\pi \I r}{2} A^t a - \pi \I A^t(l + B \tilde{\alpha}) - \pi \I a^t B^{-1}(l + B \tilde{\alpha})}.
\]
Write $l \equiv 2b + B(s- \colOne) \mod 2 B \Z^{V(\Gamma)}$, where $b \in \Z^{V(\Gamma)} \slash B \Z^{V(\Gamma)}$ and $s \in \left(\Z\slash 2 \Z \right)^{V(\Gamma)}$ satisfies $\sum_{j \in V(\Gamma)} B_{ij} s_j \equiv B_{ii} \mod 2$.
% \begin{equation*}
% \label{eq:sCongr}
% \sum_{j \in V(\Gamma)} B_{ij} s_j \equiv B_{ii} \mod 2,
% \qquad
% i \in V(\Gamma).
% \end{equation*}
The final two terms in the exponent of a summand of $\mathcal{C}^{\prime}_l$ become
% \begin{multline*}
% - \pi \I A^t(2b+B(s-\colOne) + B \alpha) - \pi \I a^t B^{-1}(2b + B(s- \colOne) + B \alpha) \\
% =
% - 2\pi \I A^tb - 2\pi \I a^t B^{-1} b - \pi \I a^t(s- \colOne + \tilde{\alpha}) - \pi \I A^t B(s-\colOne) -\pi \I A^t B \tilde{\alpha}.
% \end{multline*}
\[
- 2\pi \I A^tb - 2\pi \I a^t B^{-1} b - \pi \I a^t(s- \colOne + \tilde{\alpha}) - \pi \I A^t B(s-\colOne) -\pi \I A^t B \tilde{\alpha}.
\]
Note that $-2\pi \I A^tb \in 2 \pi \I \Z$. Since $\alpha$ is a cohomology class valued in $\Gr = \C \slash 2 \Z$ and $\red{r}$ is even, we have $A^t B \tilde{\alpha} \equiv A^t B \colOne \mod 2 \Z$; see equation \eqref{eq:firstCohom} and Remark \ref{rem:middleWeight}. Note also that $a^t(s- \colOne + \tilde{\alpha}) =a^t(s + \alpha -\red{r} \colOne)$
is congruent to $a^t (s+\alpha)$ modulo $2 \Z$, again because $\red{r}$ is even. We therefore have
\[
\mathcal{C}^{\prime}_l
=
\sum_{a \in \Z^{V(\Gamma)} \slash B \Z^{V(\Gamma)}} e^{- \frac{\pi \I r}{4} a^t B^{-1} a  - 2\pi \I a^t B^{-1} b - \pi \I a^t(s+ \alpha)} \mathcal{C}^{\prime \prime}_l,
\]
% \begin{eqnarray*}
% \mathcal{C}^{\prime}_l
% &=&
% \sum_{a \in \Z^{V(\Gamma)} \slash B \Z^{V(\Gamma)}} \sum_{A \in \Z^{V(\Gamma)} \slash \frac{4}{\even} \Z^{V(\Gamma)}} 
% e^{-\frac{\pi \I r}{4} A^t B A - \frac{\pi \I r}{4} a^t B^{-1} a - \frac{\pi \I r}{2} A^t a - 2\pi \I a^t B^{-1} b - \pi \I a^t(s- \colOne + \alpha)} \\ && e^{- \pi \I A^t B(s -\colOne) - \pi \I A^t B \colOne} \\
% &=&
% \sum_{a \in \Z^{V(\Gamma)} \slash B \Z^{V(\Gamma)}} e^{- \frac{\pi \I r}{4} a^t B^{-1} a  - 2\pi \I a^t B^{-1} b - \pi \I a^t(s- \colOne + \alpha)}  \\ && \sum_{A \in \Z^{V(\Gamma)} \slash \frac{4}{\even} \Z^{V(\Gamma)}}
% e^{-\frac{\pi \I r}{4} A^t B A  - \frac{\pi \I r}{2} A^t a  - \pi \I A^t Bs}.
% \end{eqnarray*}
where we have set
\[
\mathcal{C}^{\prime \prime}_l
=
\sum_{A \in \Z^{V(\Gamma)} \slash \frac{4}{\even} \Z^{V(\Gamma)}}
e^{-\frac{\pi \I r}{4} A^t B A  - \frac{\pi \I r}{2} A^t a  - \pi \I A^t Bs}.
\]
A more refined case-by-case analysis is now required.

\subsubsection{\texorpdfstring{The case $r \equiv \pm 1 \mod 8$}{The case r = ± 1 mod 8}}
\label{sec:pm1Mod8}

Suppose that $r \equiv \delta \mod 8$ with $\delta = \pm 1$. Since $\even=\gcd(r,2) =1$, we have
\[
\mathcal{C}^{\prime \prime}_l
% =
% \sum_{A \in \Z^{V(\Gamma)} \slash 4 \Z^{V(\Gamma)}}
% e^{-\frac{\pi \I \delta}{4} A^t B A  - \frac{\pi \I \delta}{2} A^t a  - \pi \I A^t Bs}
=
\sum_{A \in \Z^{V(\Gamma)} \slash 4 \Z^{V(\Gamma)}}
e^{\frac{2\pi \I}{4} A^t (-\frac{\delta}{2}B) A + \frac{2\pi \I}{4}(-\delta a-2Bs)^t A}.
\]
Applying Proposition \ref{prop:GaussRec} to $\mathcal{C}^{\prime \prime}_l$, we obtain
% \begin{multline*}
% \mathcal{C}^{\prime \prime}_l
% =
% % \frac{e^{-\delta \frac{\pi \I \sigma}{4}}(\frac{4}{2})^{\frac{\vert V(\Gamma) \vert}{2}}}{\vert \det \frac{1}{2} B \vert^{\frac{1}{2}}} \sum_{f \in \Z^{V(\Gamma)} \slash B \Z^{V(\Gamma)}}
% % e^{\frac{\pi \I 4}{2} \left(f-\frac{\delta a+2Bs}{4}\right)^t \frac{2}{\delta} B^{-1} \left(f-\frac{\delta a+2Bs}{4}\right)} \\
% % &=&
% % \frac{e^{-\delta \frac{\pi \I \sigma}{4}} 2^{\frac{\vert V(\Gamma) \vert}{2}}}{\vert \det B \vert^{\frac{1}{2}}} \sum_{f \in \Z^{V(\Gamma)} \slash B \Z^{V(\Gamma)}}
% % e^{4\delta \pi \I \left(f-\frac{\delta a+2Bs}{4}\right)^t B^{-1} \left(f-\frac{\delta a+2Bs}{4}\right)} \\
% % &=&
% \frac{e^{-\delta \frac{\pi \I \sigma}{4}} 2^{\frac{\vert V(\Gamma) \vert}{2}}}{\vert \det B \vert^{\frac{1}{2}}} \sum_{f \in \Z^{V(\Gamma)} \slash B \Z^{V(\Gamma)}}
% e^{4\delta \pi \I f^t B^{-1} f - 2 \pi \I f^t B^{-1} a -4\delta \pi \I f^t s + \delta  \frac{\pi \I}{4} a^t B^{-1} a} \cdot \\  e^{\delta \pi \I s^t B s +\pi \I a^ts},
% \end{multline*}
% Since $4 \pi \I f^ts \in 2 \pi \I \Z$, we can ignore it. It follows that
\begin{multline*}
\mathcal{C}^{\prime}_l
=
% \frac{e^{-\delta \frac{\pi \I \sigma}{4}} 2^{\frac{\vert V(\Gamma) \vert}{2}}}{\vert \det B \vert^{\frac{1}{2}}} \sum_{a \in \Z^{V(\Gamma)} \slash B \Z^{V(\Gamma)}} e^{- \frac{\pi \I r}{4} a^t B^{-1} a  - 2\pi \I a^t B^{-1} b - \pi \I a^t(s- \colOne + \alpha)} \\ && e^{4\delta \pi \I f^t B^{-1} f - 2 \pi \I f^t B^{-1} a + \delta  \frac{\pi \I}{4} a^t B^{-1} a + \delta \pi \I s^t B s +\pi \I a^ts}\\
% &=&
\frac{e^{-\delta \frac{\pi \I \sigma}{4}} 2^{\frac{\vert V(\Gamma) \vert}{2}}}{\vert \det B \vert^{\frac{1}{2}}} \sum_{a, f \in \Z^{V(\Gamma)} \slash B \Z^{V(\Gamma)}} e^{- \frac{\pi \I r}{4} a^t B^{-1} a  - 2\pi \I a^t B^{-1} b - \pi \I a^t\alpha} \cdot \\ e^{4\delta \pi \I f^t B^{-1} f - 2 \pi \I f^t B^{-1} a + \delta  \frac{\pi \I}{4} a^t B^{-1} a + \delta \pi \I s^t B s}.
\end{multline*}
%where we have dropped the term $4 \pi \I f^ts \in 2 \pi \I \Z$ from the exponential.
In terms of the formulae of Section \ref{sec:surgeryTopology}, this gives
% \begin{multline*}
% \mathcal{C}_l
% =
% e^{- \frac{\pi \I}{r} l^t B^{-1} l} \frac{e^{\frac{\pi \I \sigma}{4}}(\frac{r}{4})^{\frac{\vert V(\Gamma) \vert}{2}}}{\vert \det B \vert^{\frac{1}{2}}} \frac{e^{-\delta \frac{\pi \I \sigma}{4}} 2^{\frac{\vert V(\Gamma) \vert}{2}}}{\vert \det B \vert^{\frac{1}{2}}} \sum_{a,f \in \Z^{V(\Gamma)} \slash B \Z^{V(\Gamma)}} e^{- \frac{\pi \I r}{4} a^t B^{-1} a  - 2\pi \I a^t B^{-1} b - \pi \I a^t(\alpha- \colOne)} \cdot \\ e^{4\delta \pi \I f^t B^{-1} f - 2 \pi \I f^t B^{-1} a + \delta  \frac{\pi \I}{4} a^t B^{-1} a + \delta \pi \I s^t B s}.
% \end{multline*}
% which is
% \begin{multline*}
% \mathcal{C}_l
% =
% e^{-\frac{\pi \I}{r} l^t B^{-1} l} \frac{e^{\frac{\pi \I \sigma (1-\delta)}{4}}r^{\frac{\vert V(\Gamma) \vert}{2}}}{\vert \det B \vert} \sum_{a,f \in \Z^{V(\Gamma)} \slash B \Z^{V(\Gamma)}} e^{- \frac{\pi \I(r-\delta)}{4} a^t B^{-1} a  - 2\pi \I a^t B^{-1} b - \pi \I a^t\alpha} \cdot \\ e^{4\delta \pi \I f^t B^{-1} f - 2 \pi \I f^t B^{-1} a + \delta \pi \I s^t B s}.
% \end{multline*}
% \begin{multline*}
% \mathcal{C}_l
% =
% e^{-\frac{\pi \I}{r} l^t B^{-1} l} \frac{e^{\frac{\pi \I \sigma (1-\delta)}{4}}r^{\frac{\vert V(\Gamma) \vert}{2}}}{\vert \det B \vert} \cdot \\ \sum_{a,f \in \Z^{V(\Gamma)} \slash B \Z^{V(\Gamma)}} e^{2\pi \I \big(-\frac{r-\delta}{8} a^t B^{-1} a -a^t B^{-1} b -\frac{1}{2} a^t\alpha + 2 \delta f^t B^{-1} f - f^t B^{-1} a +\frac{\delta}{2} s^t B s \big)}.
% \end{multline*}
\begin{multline*}
\mathcal{C}_l
=
e^{-\frac{\pi \I}{r} l^t B^{-1} l} \frac{e^{\frac{\pi \I \sigma (1-\delta)}{4}}r^{\frac{\vert V(\Gamma) \vert}{2}}}{\vert \det B \vert} \cdot \\ \sum_{a,f \in \Z^{V(\Gamma)} \slash B \Z^{V(\Gamma)}} e^{2\pi \I \big(-\frac{r-\delta}{8} \lk(a,a) -\lk(a,b) -\frac{1}{2} \coh(a) + 2 \delta \lk(f,f) - \lk(f,a) +\frac{\delta}{2} s^t B s \big)}.
\end{multline*}

Direct computations give
\[
\mathcal{A}
=
(-1)^{b_+}
r^{-\frac{\vert V(\Gamma) \vert}{2}} q^{\frac{3 \sigma - \tr B}{2}} \cdot
\begin{cases}
e^{\pi \I \sigma} & \mbox{ if } \delta = 1,\\
e^{\frac{\pi \I}{2} \sigma} & \mbox{ if } \delta = -1
\end{cases}
\]
and $\mathcal{B} = (-1)^{b_+} \tor(M,[4 \coh])$; see \cite[Eqn. (A.1)]{costantino2023} for the latter. Combining the above computations, we find
\begin{multline*}
N_r^{\osp}(M,\coh)
=
q^{\frac{3\sigma-\tr B}{2}} e^{\pi \I(\sigma-s^t B s)} \frac{\tor(M,[4 \coh])}{\vert H_1(M;\Z) \vert} \cdot \\
\sum_{\substack{l \in \Z^{V(\Gamma)} \\ a,f \in \Z^{V(\Gamma)} \slash B \Z^{V(\Gamma)}}} F^+_l e^{- \frac{\pi \I}{r} l^t B^{-1} l}  e^{2\pi \I \big(-\frac{r-\delta}{8}\lk(a,a) -\lk(a,b+f) +2 \delta \lk(f,f) - \frac{1}{2} \coh(a)\big)}.
\end{multline*}
% \begin{multline*}
% N_r^{\osp}(M(\Gamma),\coh)
% =
% q^{\frac{3\sigma-\tr B}{2}}
% \left\{
% \begin{array}{l}
% e^{\pi \I(\sigma+s^t B s)}\\
% e^{\pi \I(\sigma-s^t B s)}
% \end{array}
% \right\}
% \frac{(-1)^{b_+} \tor(M,[4 \coh])}{\vert H_1(M(\Gamma);\Z) \vert} \cdot \\
% \sum_{l \in \Z^{V(\Gamma)}} \sum_{a,f \in \Z^{V(\Gamma)} \slash B \Z^{V(\Gamma)}} F_l e^{- \frac{\pi \I}{r} l^t B^{-1} l}  e^{- \frac{\pi \I(r-\delta)}{4} \lk(a,a)  - 2\pi \I \lk(a,b+f) +4\delta \pi \I \lk(f,f)- \pi \I \coh(a)}.
% \end{multline*}
% Using equation \eqref{eq:RokhlinMod4}, we can write
% \[
% \left\{
% \begin{array}{l}
% e^{\pi \I(\sigma+s^t B s)}\\
% e^{\pi \I(\sigma-s^t B s)}
% \end{array}
% \right\}
% =
% e^{\pi \I \mu(M,s)}.
% \]
% We can write
% \begin{multline*}
% N_r^{\osp}(M(\Gamma),\coh)
% =
% q^{\frac{3\sigma-\tr B}{2}} \frac{(-1)^{b_+}e^{\pi{\I} \mu(M,s)} \tor(M,[4 \coh])}{\vert H_1(M(\Gamma);\Z) \vert} \\
% \sum_{l \in \Z^{V(\Gamma)}} \sum_{a,f \in \Z^{V(\Gamma)} \slash B \Z^{V(\Gamma)}} F_l e^{- \frac{\pi \I}{r} l^t B^{-1} l}  e^{- \frac{\pi \I(r-\delta)}{4} \lk(a,a)  - 2\pi \I \lk(a,b+f) +4\delta\pi \I \lk(f,f)- \pi \I \coh(a)}.
% \end{multline*}
Noting that $e^{- \frac{\pi \I}{r} l^t B^{-1} l} = q^{- \frac{1}{2}l^t B^{-1} l}$ and using equation \eqref{eq:RokhlinMod4}
% \[
% e^{- \frac{\pi \I}{r} l^t B^{-1} l}
% =
% e^{- \frac{2\pi \I}{r} \frac{1}{2}l^t B^{-1} l}
% =
% q^{- \frac{1}{2}l^t B^{-1} l},
% \]
we arrive at the claimed equality.
% \begin{multline*}
% N_r^{\osp}(M,\coh)
% =
% \frac{e^{\pi{\I} \mu(M,s)} \tor(M,[4 \coh])}{\vert H_1(M;\Z) \vert} \sum_{a,b,f \in \Z^{V(\Gamma)} \slash B \Z^{V(\Gamma)}} \\
% \left(e^{- \frac{\pi \I(r-\delta)}{4} \lk(a,a)  - 2\pi \I \lk(a,b+f) +4\delta\pi \I \lk(f,f)- \pi \I \coh(a)} \Zhat^{\osp}_{\sigma(b,s)} (M;\qq) \right)_{\vert \qq \rightarrow e^{\frac{4 \pi \I}{r}}},
% \end{multline*}
% which is equivalent to the desired equality.

\subsubsection{\texorpdfstring{The case $r \equiv \pm 2 \mod 8$}{The case r = ± 2 mod 8}}
\label{sec:pm2Mod8}

Suppose now that $r \equiv 2\delta \mod 8$ with $\delta = \pm 1$. Starting from the end of Section \ref{sec:factrNequiv0} and using that $\even =2$, we have
\[
\mathcal{C}^{\prime \prime}_l
=
% \sum_{A \in \Z^{V(\Gamma)} \slash 2 \Z^{V(\Gamma)}}
% e^{-\frac{\delta \pi \I}{2} A^t B A  - \pi \I A^t a  - \pi \I A^t Bs} \\
% &=&
\sum_{A \in \Z^{V(\Gamma)} \slash 2 \Z^{V(\Gamma)}}
e^{\frac{2\pi \I}{2} A^t \left(-\frac{\delta}{2}B \right) A + \frac{2 \pi \I}{2} (-a-Bs)^tA}.
\]
Applying Proposition \ref{prop:GaussRec}, we obtain
% \begin{multline*}
% \mathcal{C}^{\prime \prime}_l
% =
% % \frac{e^{-\delta \frac{\pi \I \sigma}{4}}(\frac{2}{2})^{\frac{\vert V(\Gamma) \vert}{2}}}{\vert \det \frac{1}{2} B \vert^{\frac{1}{2}}} \sum_{f \in \Z^{V(\Gamma)} \slash B \Z^{V(\Gamma)}}
% % e^{\delta \frac{\pi \I 2}{2} \left(f-\frac{a+Bs}{2}\right)^t 2 B^{-1} \left(f-\frac{a+Bs}{2}\right)} \\
% % &=&
% % \frac{e^{-\delta \frac{\pi \I \sigma}{4}} 2^{\frac{\vert V(\Gamma) \vert}{2}}}{\vert \det B \vert^{\frac{1}{2}}} \sum_{f \in \Z^{V(\Gamma)} \slash B \Z^{V(\Gamma)}}
% % e^{2 \delta \pi \I \left(f-\frac{a+Bs}{2}\right)^t B^{-1} \left(f-\frac{a+Bs}{2}\right)} \\
% % &=&
% \frac{e^{-\delta \frac{\pi \I \sigma}{4}} 2^{\frac{\vert V(\Gamma) \vert}{2}}}{\vert \det B \vert^{\frac{1}{2}}} \sum_{f \in \Z^{V(\Gamma)} \slash B \Z^{V(\Gamma)}} \\
% e^{2 \delta \pi \I f^t B^{-1} f -2 \delta \pi \I f^t B^{-1} a - 2 \delta \pi \I f^ts + \frac{\delta \pi \I}{2} a^t B^{-1} a + \frac{\delta \pi \I}{2} s^t B s + \delta \pi \I a^t s}.
% \end{multline*}
% Since $2 \pi \I f^ts \in 2 \pi \I \Z$, we can ignore this part. It follows that
\begin{multline*}
\mathcal{C}^{\prime}_l
=
% \frac{e^{-\delta \frac{\pi \I \sigma}{4}} 2^{\frac{\vert V(\Gamma) \vert}{2}}}{\vert \det B \vert^{\frac{1}{2}}} \sum_{a,f \in \Z^{V(\Gamma)} \slash B \Z^{V(\Gamma)}} \\ &&
% e^{- \frac{\pi \I r}{4} a^t B^{-1} a - 2\pi \I a^t B^{-1} b - \pi \I a^t(s- \colOne + \alpha) + 2\delta \pi \I f^t B^{-1} f -2 \delta \pi \I f^t B^{-1} a + \frac{\delta \pi \I}{2} a^t B^{-1} a + \frac{\delta \pi \I}{2} s^t B s + \delta \pi \I a^t s} \\
% &=&
\frac{e^{-\delta \frac{\pi \I \sigma}{4}} 2^{\frac{\vert V(\Gamma) \vert}{2}}}{\vert \det B \vert^{\frac{1}{2}}} \sum_{a,f \in \Z^{V(\Gamma)} \slash B \Z^{V(\Gamma)}} \\
e^{- \frac{\pi \I(r-2\delta)}{4} a^t B^{-1} a - 2\pi \I a^t B^{-1} b - \pi \I a^t \alpha + 2\delta\pi \I f^t B^{-1} f -2\delta \pi \I f^t B^{-1} a + \frac{\delta\pi \I}{2} s^t B s}.
\end{multline*}
%where we have omitted the exponential of $2 \pi \I f^ts \in 2 \pi \I \Z$.
Recombining terms, we conclude that
% \begin{multline*}
% \mathcal{C}_l
% =
% e^{- \frac{\pi \I}{r} l^t B^{-1} l}\frac{e^{\frac{\pi \I \sigma}{4}}(\frac{r}{4})^{\frac{\vert V(\Gamma) \vert}{2}}}{\vert \det B \vert^{\frac{1}{2}}} \frac{e^{-\delta \frac{\pi \I \sigma}{4}} 2^{\frac{\vert V(\Gamma) \vert}{2}}}{\vert \det B \vert^{\frac{1}{2}}} \sum_{a,f \in \Z^{V(\Gamma)} \slash B \Z^{V(\Gamma)}} \\
% e^{- \frac{\pi \I(r-2\delta)}{4} a^t B^{-1} a - 2\pi \I a^t B^{-1} b - \pi \I a^t \alpha + 2\delta\pi \I f^t B^{-1} f -2 \delta \pi \I f^t B^{-1} a + \frac{\delta \pi \I}{2} s^t B s}.
% \end{multline*}
% Since $r \equiv 2 \delta \mod 8$, we can use equation \eqref{eq:linkingNum} to replace $a B^{-1} a$ with $\lk(a,a)$. We find
% \begin{multline*}
% \mathcal{C}_l
% =
% e^{-\frac{\pi \I}{r} l^t B^{-1} l}\frac{e^{\frac{\pi \I \sigma}{4} (1-\delta)}(\frac{r}{2})^{\frac{\vert V(\Gamma) \vert}{2}}}{\vert \det B \vert} \sum_{a,f \in \Z^{V(\Gamma)} \slash B \Z^{V(\Gamma)}} \\ 
% e^{- \frac{\pi \I(r-2\delta)}{4} \lk(a,a) - 2\pi \I \lk(a,b+\delta f) - \pi \I \coh(a) + 2\delta \pi \I \lk(f,f) + \frac{\delta \pi \I}{2} s^t B s}.
% \end{multline*}
\begin{multline*}
\mathcal{C}_l
=
e^{-\frac{\pi \I}{r} l^t B^{-1} l}\frac{e^{\frac{\pi \I \sigma}{4} (1-\delta)}(\frac{r}{2})^{\frac{\vert V(\Gamma) \vert}{2}}}{\vert \det B \vert} \cdot \\
\sum_{a,f \in \Z^{V(\Gamma)} \slash B \Z^{V(\Gamma)}} e^{2\pi \I\big(-\frac{r-2 \delta}{8} \lk(a,a) - \lk(a,b+\delta f) - \frac{1}{2} \coh(a) + \delta \lk(f,f) + \frac{\delta}{4} s^t B s \big)}.
\end{multline*}

Direct computations give
\[
\mathcal{A}
=
(-1)^{b_+}
\left(\frac{r}{2}\right)^{-\frac{\vert V(\Gamma) \vert}{2}} q^{\frac{3 \sigma-\tr B}{2}} \cdot
\begin{cases}
e^{-\frac{\pi \I}{2} \sigma} & \mbox{ if } \delta = 1,\\
1 & \mbox{ if } \delta = -1
\end{cases}
\]
%We have
%\[
%\mathcal{A} = \left(\frac{r}{2}\right)^{-\frac{\vert V(\Gamma) \vert}{2}} e^{-\frac{\pi \I}{2} \sigma} q^{\frac{3 \sigma-\tr B}{2}}.
%\]
%when $\delta = 1$ and
%\[
%\mathcal{A}
%=
%\left(\frac{r}{2}\right)^{-\frac{\vert V(\Gamma) \vert}{2}} e^{- \pi \I \sigma \frac{-6}{2r}} q^{-\frac{1}{2}(\red{r}-1)^2 \tr B}
%=
%\left(\frac{r}{2}\right)^{-\frac{\vert V(\Gamma) \vert}{2}} e^{\frac{3 \pi \I}{r} \sigma} q^{-\frac{1}{2} \tr B}
%=
%\left(\frac{r}{2}\right)^{-\frac{\vert V(\Gamma) \vert}{2}} q^{\frac{3\sigma-\tr B}{2}}
%\]
%when $\delta = -1$.
and $\mathcal{B} = (-1)^{b_+} \tor(M,[2 \coh])$. Putting the above calculations together gives
\begin{multline*}
N_r^{\osp}(M,\coh)
=
\frac{e^{-\delta \frac{\pi \I}{2} \mu(M,s)} \tor(M,[2 \coh])}{\vert H_1(M;\Z) \vert} \cdot \\ 
\lim_{\qq \rightarrow e^{\frac{4\pi \I}{r}}} \sum_{a,b,f \in H_1(M;\Z)} e^{2 \pi \I \big(-\frac{r-2\delta}{8} \lk(a,a) - \lk(a,b+\delta f) - \delta \lk(f,f) -\frac{1}{2} \coh(a) \big)} \Zhat^{\osp}_{\sigma(b,s)} (M;\qq).
\end{multline*}

\subsubsection{\texorpdfstring{The case $r \equiv \pm 3 \mod 8$}{The case r = ± 3 mod 8}}
\label{sec:pm3Mod8}

While the calculations in the previous two sections can be repeated when $r \equiv 3 \delta \mod 8$, they do not lead to similar conclusions. Indeed, the first step is to write
\[
\mathcal{C}^{\prime \prime}_l
% &=&
% \sum_{A \in \Z^{V(\Gamma)} \slash 4 \Z^{V(\Gamma)}}
% e^{-\frac{\pi \sqrt{-1} 3 \delta }{4} A^t B A  - \frac{\pi \sqrt{-1} 3 \delta}{2} A^t a  - \pi \sqrt{-1} A^t Bs} \\
=
\sum_{A \in \Z^{V(\Gamma)} \slash 4 \Z^{V(\Gamma)}}
e^{\frac{2\pi \sqrt{-1}}{4} A^t (-\frac{3 \delta}{2}B) A + \frac{2\pi \sqrt{-1}}{4}(-3 \delta a-2Bs)^t A}.
\]
Unfortunately, due to the coefficient $-\frac{3 \delta}{2}$ of the bilinear form $B$, an application of Proposition \ref{prop:GaussRec} leads to an expression which involves a sum over $\Z^{V(\Gamma)} \slash 3 B \Z^{V(\Gamma)}$, which does not have an obvious topological interpretation in terms of $M$. For this reason, we do not arrive at a universal topological relation between $N_r^{\osp}$ and $\Zhat^{\osp}$.

\subsection{\texorpdfstring{The case $r \equiv 0 \mod 8$}{The case r = 0 mod 8}}
\label{sec:0Mod8}

We require the following result.

\begin{Lem}
\label{lem:FmShift}
Given signs $n \in \{\pm 1\}^{V(\Gamma)}$, there is an equality
\[
F^-(\{n_v x_v\}_{v \in V(\Gamma)}) = (-1)^{\sum_{v \in V(\Gamma)} n_v \deg v} F^-(\{x_v\}_{v \in V(\Gamma)}).
\]
\end{Lem}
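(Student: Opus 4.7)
The plan is to perform a direct substitution into the product formula \eqref{eq:Fpm}, since $F^{-}$ is defined explicitly as a product over vertices and the substitution $x_v \mapsto n_v x_v$ factors over this product. The central observation is that for $n_v \in \{\pm 1\}$ we have $n_v^{-1} = n_v$, so each factor transforms as
\[
n_v x_v - (n_v x_v)^{-1} = n_v x_v - n_v x_v^{-1} = n_v\bigl(x_v - x_v^{-1}\bigr).
\]

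Raising the $v$-th factor to the exponent $2-\deg v$ then pulls out an overall scalar $n_v^{2-\deg v}$, and using $n_v^2 = 1$ one simplifies $n_v^{2-\deg v} = n_v^{-\deg v} = n_v^{\deg v}$. Taking the product over all $v \in V(\Gamma)$ yields
\[
F^-(\{n_v x_v\}_{v \in V(\Gamma)}) = \Biggl(\prod_{v \in V(\Gamma)} n_v^{\deg v}\Biggr) F^-(\{x_v\}_{v \in V(\Gamma)}),
\]
which is the claimed identity once one writes $n_v = (-1)^{\epsilon_v}$ and interprets the exponent of $-1$ appropriately.

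There is no real obstacle: the entire argument is a one-line manipulation of a product, with the only mildly delicate point being the bookkeeping of the sign $n_v^{2-\deg v} = n_v^{\deg v}$. I would present the proof as a single displayed computation rather than as a multi-step argument. No induction, no reciprocity, and no appeal to the tree structure of $\Gamma$ is required at this stage (the tree assumption would only enter if one wanted to further rewrite $\sum_v \deg v$ in terms of $\lvert V(\Gamma)\rvert - 1$, which is not needed here).
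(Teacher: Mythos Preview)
Your proof is correct and matches the paper's approach, which is simply stated as ``a direct calculation.'' Your observation that the final step requires writing $n_v = (-1)^{\epsilon_v}$ to match the exponent $\sum_v n_v \deg v$ in the stated identity is well taken: as literally written with $n_v \in \{\pm 1\}$ the exponent is always even, so the intended reading (confirmed by the application in Section~\ref{sec:0Mod8}, where $n$ ranges over $(\Z/r\Z)^{V(\Gamma)}$) is exactly the one you supply.
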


\begin{proof}
This is a direct calculation.
\end{proof}

Set $\tilde{\alpha}=\alpha -(\red{r}-1) \colOne$. Since $\frac{\red{r}}{r} = \frac{1}{2}$, Lemma \ref{lem:FmShift} leads to the following modification of the factorization \eqref{eq:cgpFactorization}:
\[
\mathcal{A}
=
(-1)^{b_+} r^{-\frac{\vert V(\Gamma) \vert}{2}} q^{\frac{3 \sigma - \tr B}{2}} e^{\frac{\pi \sqrt{-1}}{4}(3\sigma +4 \tr B)},
\]
\[
\mathcal{B} = (-1)^{b_+} \tor(M,[\coh]),
\]
\[
\mathcal{C}
=
\sum_{n \in (\Z \slash r \Z)^{V(\Gamma)}} (-1)^{\sum_{i \in V(\Gamma)} n_i \deg i} F^+(\{q^{\tilde{\alpha}_{n_j}}\}_{j \in V(\Gamma)}) q^{\frac{1}{2}(\tilde{\alpha} +n)^t B (\tilde{\alpha} + n)}.
\]

Fix $l \in \Z^{V(\Gamma)}$ and write $l = 2b + B(s - \colOne)$ with $\sum_{j \in V(\Gamma)} B_{ij}s_j \equiv B_{ii} \mod 2$. We have
\begin{equation*}
\label{eq:altSumDegree}
\sum_{v \in V(\Gamma)} n_v \deg v \equiv n^t B(s+ \colOne) \mod 2.
\end{equation*}
The contribution of $\prod_{v \in V(\Gamma)} x_v^{l_v}$ to $\mathcal{C}$ is
\begin{eqnarray*}
\mathcal{C}_l
% &:=&
% \sum_{k \in I_r^{V(\Gamma)}}
% (-1)^{k^t B(s+ \colOne)} q^{\frac{1}{2}(\alpha +k)^t B (\alpha + k) + l^t (\alpha+k)} \\
&=&
\sum_{n \in (\Z \slash r \Z)^{V(\Gamma)}}
q^{\frac{1}{2}(\tilde{\alpha}+n)^t B (\tilde{\alpha} + n) + l^t (\tilde{\alpha}+n)+\frac{r}{2}n^t B(s+ \colOne)}.
\end{eqnarray*}
Proceeding as in the previous sections, we arrive at the expression
\begin{multline*}
N_r^{\osp}(M,\coh)
=
q^{\frac{3 \sigma - \tr B}{2}} \frac{e^{\pi \I \mu(M,s)} \tor(M,[\coh])}{\vert H_1(M;\Z) \vert} \sum_{l \in \Z^{V(\Gamma)}} \sum_{a \in \Z^{V(\Gamma)} \slash B \Z^{V(\Gamma)}} F^+_l e^{- \frac{\pi \I}{r} l^t B^{-1} l} \\
e^{-\pi \I r \lk(a,a) - 4 \pi \I \lk(a,b)-2\pi \I \coh(a) - \pi \I \tilde{\alpha}^t B(s+\colOne)}.
\end{multline*}
However, since the final term $e^{- \pi \I \tilde{\alpha}^t B(s+\colOne)}$ is non-topological, we do not obtain a universal relation with $\Zhat^{\osp}$ when $r \equiv 0 \mod 8$.

\subsection*{Conflict of interest statement}
On behalf of all authors, the corresponding author states that there is no conflict of interest.

\bibliographystyle{amsalpha}
\bibliography{ospBib}

\end{document}